\renewcommand{\epsilon}{{\varepsilon}}
\numberwithin{equation}{section}
\newtheorem{theorem}{Theorem}[section]
\newtheorem{lemma}[theorem]{Lemma}
\newtheorem{remark}[theorem]{Remark}
\newtheorem{proposition}[theorem]{Proposition}
\newtheorem{corollary}[theorem]{Corollary}
\newcommand{\C}{\mathbb C}
\newcommand{\R}{\mathbb R}
\newcommand{\N}{\mathbb N}
\def\({\left(}
\def\){\right)}
\def\<{\left\langle}
\def\>{\right\rangle}
\def\O{\mathcal O}
\def\F{\mathcal F}
\def\K{\mathcal K}
\def\EE{\mathcal E}
\def\eps{\varepsilon}
\DeclareMathOperator{\RE}{Re}
\DeclareMathOperator{\IM}{Im}
\newcommand{\qtq}[1]{\quad\text{#1}\quad}
\begin{document}

\title[3d NLS with combined terms]
{Threshold dynamics for the 3$d$ radial NLS with combined nonlinearity}

\author[Alex H. Ardila]{Alex H. Ardila}
\address{Department of Mathematics, Universidade Federal de Minas Gerais\\ ICEx-UFMG\\ CEP
30123-970\\ MG, Brazil} 
\email{ardila@impa.br}

\author[Jason Murphy]{Jason Murphy}
\address{Department of Mathematics \& Statistics, Missouri University of Science \& Technology, Rolla, MO, USA}
\email{jason.murphy@mst.edu}

\author[Jiqiang Zheng]{Jiqiang Zheng}
\address{Institute of Applied Physics and Computational Mathematics,  \ Beijing, \ 100088, \ China}
\email{zhengjiqiang@gmail.com}

\begin{abstract}
We consider the nonlinear Schr\"odinger equation with focusing quintic and defocusing cubic nonlinearity in three space dimensions:
\[
(i\partial_t+\Delta)u = |u|^2 u - |u|^4 u.
\]
In \cite{MiaoXuZhao2013, XuZhao2020}, the authors classified the dynamics of solutions under the energy constraint $E(u)< E^c(W)$, where $W$ is the quintic NLS ground state and $E^c$ is the quintic NLS energy.  In this work we classify the dynamics of $H^1$ solutions at the threshold $E(u)=E^c(W)$.
\end{abstract}


\maketitle

\medskip

\section{Introduction}
\label{sec:intro}
We consider the following cubic-quintic nonlinear Schr\"odinger equation (NLS):
\begin{equation}\label{NLS}
\begin{cases}
i\partial_{t}u+\Delta u=|u|^{2}u-|u|^{4}u,\\
u(0,x)=u_{0}(x)\in H_{x}^{1}(\R^{3}),
\end{cases}
\end{equation}
with $u:\R\times\R^3\to\C$. This is a Hamiltonian equation corresponding to the following conserved \emph{energy}, which is finite for $H^1$ data: 
\[
E(u)=\int_{\R^{3}}\big(\tfrac{1}{2}|\nabla u|^{2}-\tfrac{1}{6}|u|^{6}+\tfrac{1}{4}|u|^{4}\big)\,dx.
\]
Solutions additionally conserve the \emph{mass}, defined by
\[
M(u)=\tfrac{1}{2}\int_{\R^3} |u|^2\,dx.
\]

An important related model is the quintic (energy-critical) NLS
\begin{equation}\label{ECgNLS}
\begin{cases} 
(i\partial_{t}+\Delta) u+|u|^{4}u=0,\\
u(0)=u_{0}\in \dot{H}_{x}^{1}(\R^{3}),
\end{cases} 
\end{equation}
with associated energy
\[
E^{c}(u)=\int_{\R^{3}}\big(\tfrac{1}{2}|\nabla u|^{2}-\tfrac{1}{6}|u|^{6}\big)\,dx.
\]

The equation \eqref{ECgNLS} has static solutions of the form $u(t,x)=W(x)$, where $W$ solves the elliptic equation
\begin{align}\label{ellipC}
-\Delta W=|W|^{4}W.
\end{align}
An explicit solution, known as the \emph{ground state}, is given by 
\[
W(x):=\(1+ \tfrac{|x|^{2} }{3} \)^{-\frac{1}{2}}.
\]

In \cite{MiaoXuZhao2013, XuZhao2020}, the authors classified the dynamics of radial $H^1$ solutions to \eqref{NLS} with $E(u_{0})<E^{c}(W)$.  They proved the following (see also \cite{ChengMiaoZhao2016, MiaoZhaoZheng2017, Cheng2020, Luo2022, AkaIbraKikuNawa2013, AkaIbraKikuNawa2021, MAKIWA}):

\begin{theorem}[Sub-threshold dynamics, \cite{XuZhao2020, MiaoXuZhao2013}]\label{Th1}
Let $u_{0}\in H^{1}(\R^{3})$ be radial, and let $u$ be the corresponding maximal-lifespan solution to \eqref{NLS} with $u|_{t=0}=u_{0}$.
\begin{enumerate}[label=\rm{(\roman*)}]
\item If $u_{0}$ obeys
\begin{equation}\label{Thres00}
E(u_{0})<E^{c}(W) \quad \text{and} \quad \|\nabla u_{0}\|_{L^{2}}<\|\nabla W\|_{L^{2}},
\end{equation}
then $u$ is global, $u\in L^{10}_{t, x}(\R\times\R^{3})$, and scatters as $t\to\pm\infty$.
\item If $u_{0}$ obeys
\begin{equation}\label{BlowC00}
E(u_{0})<E^{c}(W) \quad \text{and} \quad \|\nabla u_{0}\|_{L^{2}}>\|\nabla W\|_{L^{2}},
\end{equation}
then $u$ blows up in both time directions in finite time.
\end{enumerate}
\end{theorem}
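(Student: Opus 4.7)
My plan is to follow the Kenig--Merle concentration-compactness/rigidity roadmap, adapted to the cubic-quintic equation \eqref{NLS}. Both parts rest on the same variational input: the identity $E(u)=E^{c}(u)+\tfrac14\|u\|_{L^{4}}^{4}$ shows $E^{c}(u)<E^{c}(W)$ whenever $E(u)<E^{c}(W)$, so the sharp variational facts known for the quintic energy-critical problem can be imported. Combining the sharp Sobolev inequality with $W$ as an extremizer, the characterization $E^{c}(W)=\tfrac13\|\nabla W\|_{L^{2}}^{2}$, and the conservation of $E$, one shows that the sets $\{\|\nabla u\|_{L^{2}}<\|\nabla W\|_{L^{2}}\}$ and $\{\|\nabla u\|_{L^{2}}>\|\nabla W\|_{L^{2}}\}$ are both invariant under the flow, and one gets quantitative coercivity $K(u(t))\gtrless\delta>0$, where $K$ denotes the quintic Pohozaev/virial functional $\|\nabla u\|_{L^{2}}^{2}-\|u\|_{L^{6}}^{6}$.

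For part (i) I would first record the standard local theory for \eqref{NLS} in $H^{1}(\R^{3})$: Strichartz estimates, small-data scattering, and a stability lemma in a Strichartz space such as $L^{10}_{t,x}$. The trapping above gives a uniform $H^{1}$ bound, so the only obstruction to $L^{10}_{t,x}$ scattering is loss of compactness. Running the Kenig--Merle contradiction argument via a linear profile decomposition, one extracts a minimal non-scattering solution whose orbit is precompact in $H^{1}$ modulo symmetries. Radiality kills the spatial translation parameter, and since \eqref{NLS} is not scale-invariant one need only rule out a radial, $H^{1}$-bounded, orbit-compact solution. I would do this with a truncated Morawetz/virial on a ball of radius $R\gg1$: the quintic term contributes a favorable sign by $K(u)\gtrsim\delta$, the defocusing cubic term also contributes with a favorable sign (it only helps concentration), and the radial Sobolev embedding $\|u\|_{L^{\infty}(|x|>R)}\lesssim R^{-1}\|u\|_{H^{1}}$ controls the tail, giving $\int_{0}^{T} K(u(t))\,dt\lesssim R$, which contradicts $K(u(t))\gtrsim\delta$ once $T\gg R$.

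For part (ii) I would use the virial identity. Since $u_{0}\in H^{1}$ is not assumed to satisfy $|x|u_{0}\in L^{2}$, I would apply the Ogawa--Tsutsumi style truncated virial $V_{R}(t)=\int \varphi_{R}(x)|u(t,x)|^{2}\,dx$ for a smooth radial $\varphi_{R}$ approximating $|x|^{2}$. A direct computation gives
\[
V_{R}''(t)=16\,E^{c}(u)-8\,K(u)+6\,\|u\|_{L^{4}}^{4}+o_{R}(1)\big(1+\|\nabla u\|_{L^{2}}^{2}\big),
\]
where the cubic term appears with a sign that reinforces the quintic coercivity. Using the sub-threshold inequality $E^{c}(u)-E^{c}(W)\le -c\big(\|\nabla u\|_{L^{2}}^{2}-\|\nabla W\|_{L^{2}}^{2}\big)$ valid on the set in \eqref{BlowC00}, one obtains $V_{R}''(t)\le-\delta<0$ for $R$ large and $t$ in the maximal lifespan. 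Integrating twice forces $V_{R}(t)<0$ in finite time, contradicting $V_{R}\ge0$; hence the solution must blow up.

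The main obstacle, in my view, lies in the rigidity step for (i): the combined nonlinearity breaks scale-invariance, so one cannot directly quote the Kenig--Merle rigidity for the pure quintic equation. One has to verify that the Morawetz-type estimate still closes in the presence of the defocusing cubic, and that $K(u)$ remains coercive against \emph{both} nonlinear pieces on the precompact orbit with the uniform $H^{1}$ bound only (not $\dot H^{1}$). Bookkeeping of the radial tail terms and of the interaction between the two scales of nonlinearity is where the real work sits; the blow-up half (ii), by comparison, is essentially a truncated Glassey argument with a favorable cubic correction.
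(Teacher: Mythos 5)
Theorem~\ref{Th1} is quoted from \cite{MiaoXuZhao2013,XuZhao2020} and is not proved in this paper, so there is no internal proof to compare against; your Kenig--Merle concentration-compactness/rigidity roadmap for (i) and truncated-virial (Glassey--Ogawa--Tsutsumi) argument for (ii) do match the route taken in those references. Two concrete issues in your sketch, however.

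First, the virial identity you write in part (ii) is wrong. For $i\partial_t u+\Delta u+|u|^4u-|u|^2u=0$ in $\R^3$, the full-space identity is
\[
\frac{d^2}{dt^2}\int|x|^2|u|^2\,dx=8\|\nabla u\|_{L^2}^2-8\|u\|_{L^6}^6+6\|u\|_{L^4}^4=8K(u)+6\|u\|_{L^4}^4,
\]
whereas your claimed $16E^c(u)-8K(u)+6\|u\|_{L^4}^4$ equals $\tfrac{16}{3}\|u\|_{L^6}^6+6\|u\|_{L^4}^4\ge 0$ and cannot close a Glassey contradiction. Moreover the defocusing cubic enters with the \emph{positive} term $+6\|u\|_{L^4}^4$, which works \emph{against} making $V_R''$ negative; it is the strict gap $E^c(W)-E(u_0)>0$, pushed through conservation of energy via
\[
8K(u)+6\|u\|_{L^4}^4=48E(u_0)-16\|\nabla u(t)\|_{L^2}^2-6\|u(t)\|_{L^4}^4\le -48\bigl(E^c(W)-E(u_0)\bigr)
\]
once $\|\nabla u(t)\|_{L^2}\ge\|\nabla W\|_{L^2}$, that forces $V_R''\le -c<0$ for $R$ large. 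Your phrase ``reinforces the quintic coercivity'' is therefore misleading in this half of the argument.

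Second, in the rigidity step of (i) you treat the minimal non-scattering solution as if its orbit were precompact in $H^1$ with no modulation. Even though \eqref{NLS} has no exact scaling symmetry, the $\dot H^1$-critical profile decomposition still produces a scale $\lambda(t)$ (bounded below, but not above), and compactness is obtained only for $\{\lambda(t)^{-1/2}u(t,\cdot/\lambda(t))\}$ in $\dot H^1$. One must handle $\lambda(t)\to\infty$ separately (typically: finite-time concentration forces the conserved mass to vanish), and the bound $\int_0^T K\lesssim R$ must be run with $R$ chosen large relative to $\sup_t\lambda(t)^{-1}$, with tail errors controlled by the compactness in the rescaled frame. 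That bookkeeping is precisely where the real work in \cite{MiaoXuZhao2013,XuZhao2020} lies, and where the present paper carries out the analogous (harder, threshold) analysis in Sections~\ref{S:Compactness}--\ref{S:Impossibility}.
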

Here \emph{scattering} as $t\to\pm\infty$ refers to the statement that
\[
\lim_{t\to\pm\infty}\|u(t)-e^{it\Delta}u_\pm\|_{H^1}=0\qtq{for some}u_\pm\in H^1.
\]

In this paper, we show that this dichotomy in Theorem~\ref{Th1} continues to hold for radial $H^1$ solutions at the energy threshold $E(u_0)=E^c(W)$.

\begin{theorem}[Threshold dynamics]\label{Th2} 
Let $u_{0}\in H^{1}(\R^{3})$ be radial and let $u$ be the corresponding maximal-lifespan solution to \eqref{NLS} with $u|_{t=0}=u_0$.
\begin{enumerate}[label=\rm{(\roman*)}]
\item If $u_{0}$ obeys
\begin{equation}\label{Thres}
E(u_{0})=E^{c}(W) \quad \text{and} \quad \|\nabla u_{0}\|_{L^{2}}<\|\nabla W\|_{L^{2}},
\end{equation}
then $u$ is global, $u\in L_{t,x}^{10}(\R\times\R^3)$, and scatters as $t\to\pm\infty$.
\item If $u_{0}$ obeys
\begin{equation}\label{BlowC}
E(u_{0})=E^{c}(W) \quad \text{and} \quad \|\nabla u_{0}\|^{2}_{L^{2}}>\|\nabla W\|^{2}_{L^{2}},
\end{equation}
then $u$ blows up in both time directions in finite time.
\end{enumerate}
\end{theorem}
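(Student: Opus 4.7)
The strategy combines a sharp variational analysis with a concentration-compactness and rigidity scheme in the spirit of Kenig--Merle and Duyckaerts--Merle, adapted to the cubic-quintic setting of \eqref{NLS}. The starting point, common to both parts, is a variational trapping. For any $u_0 \in H^1(\R^3)\setminus\{0\}$ with $E(u_0)=E^c(W)$, mass conservation forces $u(t)\not\equiv 0$ and hence $\|u(t)\|_{L^4}>0$ throughout the maximal interval $I_{\max}$. Consequently,
\[
E^c(u(t)) = E(u(t))-\tfrac{1}{4}\|u(t)\|_{L^4}^4 = E^c(W)-\tfrac{1}{4}\|u(t)\|_{L^4}^4 < E^c(W)
\]
holds strictly. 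Combined with the sharp Sobolev inequality $\|v\|_{L^6}^6 \le \|\nabla v\|_{L^2}^6/\|\nabla W\|_{L^2}^4$ and the fact that its extremal $W$ does \emph{not} belong to $L^2(\R^3)$, a continuity-in-time argument preserves the strict inequalities $\|\nabla u(t)\|_{L^2}<\|\nabla W\|_{L^2}$ in \eqref{Thres} and $\|\nabla u(t)\|_{L^2}>\|\nabla W\|_{L^2}$ in \eqref{BlowC} throughout $I_{\max}$: at a would-be crossing time $t^{*}$, the strict relation $E^c(u(t^*))<E^c(W)$ combined with $\|\nabla u(t^{*})\|_{L^2}=\|\nabla W\|_{L^2}$ would force $\|u(t^{*})\|_{L^6}^6 > \|\nabla W\|_{L^2}^2$, contradicting Sobolev.

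For Part (i), I argue by contradiction. A linear profile decomposition of $u(t_n)$ along a suitable sequence of times, together with Theorem \ref{Th1} and a standard perturbation/stability lemma, extracts a minimal non-scattering radial solution $u_c$ at the threshold with $\|\nabla u_c\|_{L^2}<\|\nabla W\|_{L^2}$, whose full orbit is precompact in $H^1(\R^3)$ modulo the quintic rescaling $v\mapsto \lambda^{1/2}v(\lambda\cdot)$ inherited from the energy-critical component. The rigidity step eliminates $u_c$ in two cases: (a) if $\inf_t\|u_c(t)\|_{L^4}^4>0$, then $\sup_t E^c(u_c(t))<E^c(W)$ uniformly and an effective reapplication of the sub-threshold analysis underlying Theorem \ref{Th1} yields scattering, contradicting minimality; (b) otherwise $\|u_c(t_n)\|_{L^4}\to 0$ along a subsequence, and the energy identity combined with the factorization $(r-1)^2(r+2)\ge 0$ for $r:=\|\nabla u_c(t_n)\|_{L^2}^2/\|\nabla W\|_{L^2}^2 \le 1$ forces $\|\nabla u_c(t_n)\|_{L^2}\to\|\nabla W\|_{L^2}$ and $\|u_c(t_n)\|_{L^6}^6\to\|\nabla W\|_{L^2}^2$, so $u_c(t_n)$ is a Sobolev-extremizing sequence in $\dot{H}^{1}$. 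By concentration-compactness it must rescale to $W$ in $\dot{H}^{1}$; but $W\notin L^2(\R^3)$ while $M(u_c)$ is fixed and finite, which is incompatible with the $H^{1}$-precompactness of the rescaled orbit.

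For Part (ii), the trapping yields $\|\nabla u(t)\|_{L^2}^2>\|\nabla W\|_{L^2}^2$ on $I_{\max}$. The virial identity combined with energy conservation gives, for finite-variance initial data,
\[
\frac{d^2}{dt^2}\int_{\R^3}|x|^2|u(t,x)|^2\,dx = -16\bigl(\|\nabla u(t)\|_{L^2}^2-\|\nabla W\|_{L^2}^2\bigr)-6\|u(t)\|_{L^4}^4,
\]
with both summands on the right non-positive. Upgrading to the coercive bound $V''(t)\le -\delta<0$ requires excluding the degenerate regime $\|\nabla u(t_n)\|_{L^2}^2\to\|\nabla W\|_{L^2}^2$ and $\|u(t_n)\|_{L^4}\to 0$; this is achieved by the same Sobolev-extremizing/mass-conservation contradiction as in case (b) above. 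Finite-time blow-up then follows by the standard convexity argument on the variance. For radial $H^{1}$ data without finite variance, I use a truncated virial $V_R(t)=\int\chi(|x|/R)|x|^2|u|^2\,dx$ and absorb the truncation error via the radial Strauss decay estimate.

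The principal obstacle throughout is the rigidity step: excluding the scenario in which the (compactified) threshold solution asymptotically resembles a rescaled ground state $W$ at a scale degenerating to $0$ or $\infty$. The decisive mechanism, absent from Duyckaerts--Merle's treatment of the pure energy-critical NLS (where the intermediate $W^{\pm}$ solutions require separate construction), is that $W\notin L^2(\R^3)$: no Sobolev extremal can be accessed within $H^1(\R^3)$ at fixed finite mass, which both rules out analogs of $W^{\pm}$ for \eqref{NLS} and yields the clean dichotomy asserted in Theorem \ref{Th2}.
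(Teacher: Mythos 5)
Your high-level blueprint---variational trapping, concentration--compactness, rigidity via $W\notin L^2$---matches the paper's architecture, and the trapping argument itself is fine. However, the rigidity steps in both parts have genuine gaps, and the overall logic of Part (ii) is inverted.

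The factorization argument does not work. From $E^c(u_c(t_n))\to E^c(W)$, the sharp Sobolev inequality, and $r_n:=\|\nabla u_c(t_n)\|_{L^2}^2/\|\nabla W\|_{L^2}^2\le 1$, one deduces $r_n^3-3r_n+2\ge o(1)$, i.e.\ $(r_n-1)^2(r_n+2)\ge o(1)$. But the left side is always nonnegative, so this is vacuous; it does \emph{not} force $r_n\to 1$. (Indeed $r_n\to 0.7$ is perfectly consistent with sharp Sobolev and $\|u_c(t_n)\|_{L^4}\to 0$.) Forcing $\delta(t_n)\to 0$ requires more: the paper's Lemma~\ref{Compa} starts instead from $F^c_\infty[u(t_n)]\to 0$ and combines sharp Sobolev with a uniform lower bound on $\|\nabla u\|_{L^2}$; and Lemma~\ref{DeltaZero} deduces ``$\lambda(t_n)\to\infty\Rightarrow\delta(t_n)\to 0$'' only by invoking the full Duyckaerts--Merle classification of quintic threshold solutions, together with a stability argument. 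These steps have no counterpart in your proposal.

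More seriously, your rigidity argument only yields a contradiction when the modulation scale stays bounded: then the rescaled orbit is bounded in $L^2$, the weak $L^2$ limit must be $W$, and $W\notin L^2$ gives the contradiction. This is precisely Remark~\ref{Yinfinity} in the paper, and it is only \emph{half} the story. When the scale escapes to infinity---which, in both the finite-time and infinite-time blowup scenarios of Part~(i), is exactly what happens---the rescaled orbit is \emph{not} $L^2$-bounded and the mass argument is silent. The paper disposes of this case via entirely different mechanisms: for finite-time blowup, concentration of scale forces $M(u_0)=0$ (Section~\ref{S:FTB}); for infinite-time blowup, the modulation analysis (Proposition~\ref{Modilation11}, giving $\frac{1}{1+\mu(t)}\lesssim\delta(t)$ and $|\mu'/\mu|\lesssim\mu^2\delta$), a Gronwall argument from a refined localized virial estimate (Lemma~\ref{Lemma111}), and a quantitative control of scale-variation by $\int\delta$ (Proposition~\ref{Spatialcenter}) combine to show the scale cannot escape. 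None of this appears in your sketch. Your case~(a) is also problematic: $\sup_t E^c(u_c(t))<E^c(W)$ does not let you ``reapply'' Theorem~\ref{Th1}, whose hypothesis is $E(u_0)<E^c(W)$, not $E^c(u_0)<E^c(W)$, and here $E(u_c)=E^c(W)$ identically.

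For Part~(ii) the logic is actually inverted. You propose to ``exclude the degenerate regime'' $\delta(t_n)\to 0$ and then run a convexity argument with $V''\le -c<0$. But the degenerate regime \emph{cannot} be excluded: the paper's Lemma~\ref{NeD} and Corollary~\ref{delace} show that for any forward-global solution satisfying \eqref{BlowC}, a localized virial plus Gronwall \emph{forces} $\delta(t_n)\to 0$ along some sequence. The contradiction does not come from uniform convexity at all; it comes from pairing this forced decay of $\delta$ with the modulation estimate $\frac{1}{1+\mu(t)}\lesssim\delta(t)$ (so the scale must diverge) against a compactness/positivity constraint on the virial functional $V_R$ (so the scale cannot diverge). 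The truncated virial with radial Strauss errors is part of the machinery, but the convexity argument as you state it does not close. In short: you have correctly identified $W\notin L^2$ as the distinguishing mechanism, but the implementation---the modulation analysis around $W$, the refined virial estimates that make that analysis interact with the virial dynamics, and the case analysis on the behavior of the scaling parameter---is missing, and the steps you offer in its place either prove nothing (the factorization) or assume what must in fact be confronted (the non-occurrence of the degenerate regime).
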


Generally speaking, we regard \eqref{NLS} as a perturbation of the energy-critical NLS \eqref{ECgNLS}.  For this latter model, one also has a sub-threshold scattering/blowup dichotomy for radial $\dot H^1$-data completely analogous to Theorem~\ref{Th1} \cite{KenigMerle2006}.  As for the threshold dynamics, the work \cite{DuyckaMerle2009} classified all possible behaviors for radial $\dot H^1$-data with energy equal to that of the ground state.  In addition to the possibility of scattering and blowup, the authors constructed two special solutions $W^\pm$, which are `heteroclinic orbits' connecting the ground state to blowup/scattering.  

Comparing this result with Theorem~\ref{Th2}, one sees that the heteroclinic orbits are now missing.  This phenomenon has been previously observed in previous works on focusing NLS with repulsive-type perturbations (see e.g. \cite{DuyLanRou2022} for the focusing cubic NLS outside of a convex obstacle and  \cite{ArdilaInui2022, MiaMurphyZheng2021} for the focusing NLS with repulsive potentials ).  As noted in \cite{DuyckaMerle2009}, however, the heteroclinic orbits are also absent in the setting of \eqref{ECgNLS} if one considers $H^1$-solutions. This is due to the fact that in three space dimensions the ground state $W$, and hence the solutions $W^\pm$, fail to belong to $L^2$.  In particular, it is not immediately clear whether Theorem~\ref{Th2} provides a complete picture of the threshold dynamics, or whether working with a larger class of initial data  may reveal additional solution behaviors.  

We contend that in the setting of \eqref{Thres}, one will only observe scattering, while in the setting of \eqref{BlowC} it is conceivable that additional behaviors are possible.  We plan to pursue these questions in future work.  For some additional results related to questions of this type, we further refer the reader to \cite{CarlesSparber, ArdilaMurphy, Murphy, KMV}, which consider the classification of threshold behaviors for the cubic-quintic NLS with defocusing quintic term and focusing cubic term.

In the rest of this introduction, we outline the structure of the paper and the arguments used to prove Theorem~\ref{Th2}. 

The main technical ingredients needed to establish Theorem~\ref{Th2} are localized virial arguments (see Section~\ref{S:preli}), concentration-compactness (see Section~\ref{S:preli} and Section~\ref{S:Compactness}), and modulation analysis around the ground state (see Section~\ref{S:Modulation}). In addition to the standard localized virial estimates, we will utilize a version of the virial identity that takes into account the fact that the solution is close to the ground state.  The radial assumption allows us to ignore the possibility of a moving spatial center in the concentration-compactness arguments (as well as in the modulation analysis).  In terms of the technical details, we rely on the radial Sobolev embedding to control some error terms arising in the localized virial arguments.  As a radial assumption is used to establish the sharp scattering/blowup results of \cite{KenigMerle2006, DuyckaMerle2009, XuZhao2020, MiaoXuZhao2013}, it is natural to work under this assumption in the present work, as well. 

The proof of blowup in the case \eqref{BlowC} is based on the standard virial argument together with some modulation analysis.  We remark that the conservation of mass is used in an essential way in the proof. In fact, one can observe that if the coefficient
appearing in front of the cubic nonlinearity  is nonnegative,  then it  plays no role in this argument, and in particular could be taken to be equal to zero.  In this case, we reproduce the fact (already observed in \cite{DuyckaMerle2009}) that $H^1$ threshold solutions to \eqref{ECgNLS} obeying \eqref{BlowC} blow up in finite time.  In particular, one does not encounter the special solution $W^+$ in the $H^1$ setting.  For the details, see Section~\ref{Sec22}.

The proof of scattering in the case \eqref{Thres} is based on the concentration-compactness approach.  In particular, we prove that if a solution as in Theorem~\ref{Th2}(i) fails to scatter, then it satisfies a compactness property in $\dot H^1$; see Section~\ref{S:Compactness}.  We then rule out the possibility that such a solution can exist.  We consider separately the possibility of finite-time blowup and infinite-time blowup and obtain a contradiction in either case. 

In the case of finite-time blowup, we find that the solution must move to arbitrarily small spatial scales as one approaches the finite blowup time.  In this case, however, we can prove that the (conserved) mass of the solution is identically zero, which leads to a contradiction.  See Section~\ref{S:FTB}.

In the case of infinite-time blowup, the argument proceeds roughly as follows. Using the standard localized virial argument, we can find a sequence of times along which the solution approaches the orbit of the ground state.  This is made precise using the quantity 
\[
\delta(t) = \bigl| \|\nabla W\|_{L^2}^2 - \|\nabla u(t)\|_{L^2}^2\bigr|. 
\]
We can further show that if $\delta(t_n)\to 0$ for some $t_n\to \infty$, then the solution must move to small spatial scales.  However, the modulation analysis shows that the variation in spatial scale may be controlled by the integral of $\delta(t)$, while the refined virial estimate essentially shows that we may control such time integrals by the value of $\delta(\cdot)$ at the endpoints.  It follows that the solution cannot in fact move to arbitrarily small scales, and hence we obtain a contradiction.  See Section~\ref{S:ITB}. 
%

%
%
%

\subsection*{Notation.} Given positive quantities $A$ and $B$, we use $A\lesssim B$ or $B\gtrsim A$ to denote $A\leq CB$ for some positive constant $C>0$. We also use $A\sim B$ to indicate  $A \lesssim B \lesssim A$. 

For $u:I\times \R^{3}\rightarrow \C$, $I\subset \R$, we write
\[ \|  u \|_{L_{t}^{q}L^{r}_{x}(I\times \R^{3})}=\|  \|u(t) \|_{L^{r}_{x}(\R^{3})}  \|_{L^{q}_{t}(I)}
 \]
with $1\leq q\leq r\leq\infty$. We let $\<\nabla\>=(1-\Delta)^{1/2}$ and we define the Sobolev norms 
\[\|u\|_{H^{s,r}(\R^{3})}:=\|\<\nabla\>^{s} u\|_{L^{r}_{x}(\R^{3})}.\]
Moreover, for $f\in H^{1}(\R^{3})$ we denote
\[
\delta(f):=|\|\nabla W\|_{{L}^{2}}^{2}-\|\nabla f\|_{{L}^{2}}^{2}|.
\]

\subsection*{Acknowledgements} J. M. was supported by NSF grant DMS-2137217.   J. Z. was supported by National key R\&D program of China: 2021YFA1002500, and NSFC grant No.12271051.

\section{Preliminaries}\label{S:preli}
We call $(q,r)$ admissible if $2\leq q,r\leq\infty$ and $\frac{2}{q}+\frac{3}{r}=\frac{3}{2}$. We define $S^{0}(I\times \R^{3})$ via the norm
\[
\| u  \|_{S^{0}(I\times \R^{3})}:= \sup\left\{ \|  u \|_{L_{t}^{q}L^{r}_{x}(I\times \R^{3})}: \quad \text{$(q,r)$ is admissible}\right\}.
\]
The standard Strichartz estimates in three dimensions are stated as follows: 
\begin{lemma}[Strichartz estimates, \cite{GinibreVelo, KeelTao, Strichartz}] 
Let $(q,r)$ be an admissible pair. Then the solution $u$ to $(i\partial_{t}+\Delta) u=F$ for a function $F$ with initial data $u_{0}$
obeys
\[\| u  \|_{L_{t}^{q}L^{r}_{x}(I\times \R^{3})} \lesssim \| u_{0}\|_{L^{2}_{x}(\R^{3})}+ \| F\|_{L_{t}^{\tilde{q}'}L^{\tilde{r}'}_{x}(I\times \R^{3})}, \]
where $2\leq \tilde{q},\tilde{r}\leq\infty$ with $\frac{2}{\tilde{q}}+\frac{3}{\tilde{r}}=\frac{3}{2}$ and for some interval $I\subset \R$. 
\end{lemma}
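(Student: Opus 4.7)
The plan is to deduce the Strichartz estimates from the standard dispersive estimate for the free Schrödinger propagator via the $TT^*$ / abstract Keel--Tao argument. First I would record the dispersive bound
\[
\|e^{it\Delta}f\|_{L^\infty_x(\R^3)} \lesssim |t|^{-3/2}\|f\|_{L^1_x(\R^3)},
\]
which follows from the explicit kernel of $e^{it\Delta}$ in three dimensions. Interpolating this against the unitarity of $e^{it\Delta}$ on $L^2$ yields
\[
\|e^{it\Delta}f\|_{L^r_x} \lesssim |t|^{-3(1/2-1/r)}\|f\|_{L^{r'}_x}\qquad \text{for } 2\le r\le\infty.
\]

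Next I would establish the homogeneous estimate $\|e^{it\Delta}u_0\|_{L^q_tL^r_x}\lesssim \|u_0\|_{L^2}$ for admissible $(q,r)$ by the $TT^*$ method: setting $Tf(t)=e^{it\Delta}f$, the adjoint $T^*$ integrates $e^{-is\Delta}F(s)\,ds$, and $TT^*F(t)=\int e^{i(t-s)\Delta}F(s)\,ds$. Combining the dispersive estimate with the Hardy--Littlewood--Sobolev inequality in time handles the non-endpoint admissible pairs directly; the admissibility relation $\tfrac{2}{q}+\tfrac{3}{r}=\tfrac{3}{2}$ is exactly what makes the HLS exponent work out. The endpoint case $(q,r)=(2,6)$ is the delicate one and is the main obstacle; I would invoke the bilinear dyadic decomposition argument of Keel--Tao to cover it.

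Having the dual estimate $\|T^*F\|_{L^2_x}\lesssim \|F\|_{L^{\tilde q'}_t L^{\tilde r'}_x}$ for any admissible $(\tilde q,\tilde r)$, the inhomogeneous bound for
\[
u(t)=e^{it\Delta}u_0 -i\int_{t_0}^t e^{i(t-s)\Delta}F(s)\,ds
\]
follows by composing $T$ with $T^*$ and applying the two one-sided estimates, so that
\[
\Bigl\|\int_{t_0}^t e^{i(t-s)\Delta}F(s)\,ds\Bigr\|_{L^q_tL^r_x} \lesssim \|F\|_{L^{\tilde q'}_tL^{\tilde r'}_x}
\]
for all admissible $(q,r),(\tilde q,\tilde r)$. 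The restriction to a retarded integral (rather than a full one over $\R$) requires either the Christ--Kiselev lemma in the non-endpoint regime or the bilinear argument again in the endpoint case. Summing the homogeneous and inhomogeneous pieces and taking the supremum over admissible $(q,r)$ gives the stated bound; restriction to an arbitrary interval $I\subset\R$ is immediate since all constants are time-translation invariant.
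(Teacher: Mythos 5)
The paper does not prove this lemma; it cites it as a standard result from Strichartz, Ginibre--Velo, and Keel--Tao. Your sketch is the correct classical argument found in those references: dispersive estimate from the explicit kernel, interpolation with $L^2$ unitarity, $TT^*$ plus Hardy--Littlewood--Sobolev for non-endpoint pairs, the Keel--Tao bilinear dyadic decomposition for the endpoint $(2,6)$, and Christ--Kiselev (or the bilinear argument again at the endpoint) to pass from the untruncated to the retarded Duhamel integral. This matches the intended provenance of the lemma, so there is nothing to compare against a different in-paper proof.
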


We will also need some Littlewood--Paley theory. We let $\psi\in C_c^\infty(\R^3)$ be nonnegative and radial, with $\psi(x)=1$ if $|x|\leq 1$  and $\psi(x)=0$ if $|x|\geq \frac{11}{10}$. For $N\in 2^{\mathbb{N}}$, we define the Littlewood-Paley projections as Fourier multiplier operators, namely:  
\[
\widehat{P_{\leq N}f}(\xi):=\psi(\tfrac{\xi}{N})\hat{f}(\xi), \quad \widehat{P_{> N}f}(\xi):=\left[1-\psi(\tfrac{\xi}{N})\right]\hat{f}(\xi),
\]
and
\[
\widehat{P_{N}f}(\xi):=\left[\psi(\tfrac{\xi}{N})-\psi(\tfrac{2\xi}{N})\right]\hat{f}(\xi).
\]
These operators are bounded on Sobolev spaces and obey the following standard estimates.
\begin{lemma}[Bernstein inequalities]\label{BIN}
Fix $1\leq p\leq q\leq \infty$ and $s\geq0$. For $f: \R^{3}\to \C$, we have
\begin{align*}
\| P_{N}f   \|_{L^{q}(\R^{3})}&\lesssim N^{\frac{3}{p}-\frac{3}{q}}\| P_{N}f   \|_{L^{p}(\R^{3})},\\
\| P_{> N}f   \|_{L^{p}(\R^{3})}&\lesssim N^{-s}\| |\nabla|^{s} P_{\geq N}f   \|_{L^{p}(\R^{3})}.
\end{align*}
\end{lemma}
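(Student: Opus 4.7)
The plan is to realize each Littlewood--Paley projector as convolution against a rescaled Schwartz kernel and apply Young's convolution inequality. Throughout I would use the ``fattened projection'' trick: choose $\tilde\psi\in C_c^\infty(\R^3)$ equal to $1$ on $\{\tfrac{1}{4}\leq|\xi|\leq 2\}$, which contains the Fourier support of $\psi(\xi)-\psi(2\xi)$, so that the corresponding multiplier $\tilde P_N$ (with symbol $\tilde\psi(\xi/N)$) satisfies $\tilde P_N P_N=P_N$. Analogously, $P_{\geq N}$ may be defined as a fattened high-frequency projection whose symbol equals $1$ on $\{|\xi|\geq N\}$, so that $P_{\geq N}$ acts as the identity on the Fourier support of $P_{>N}$.

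For the first inequality, I would write $P_N f = \tilde P_N P_N f = K_N * P_N f$, where $K_N(x)=N^3 K(Nx)$ and $K:=\mathcal{F}^{-1}\tilde\psi\in\Sch(\R^3)$. Choosing $r\in[1,\infty]$ by $1+\tfrac{1}{q}=\tfrac{1}{r}+\tfrac{1}{p}$, Young's inequality yields
\[
\|P_N f\|_{L^q}\leq\|K_N\|_{L^r}\|P_N f\|_{L^p}= N^{3-3/r}\|K\|_{L^r}\|P_N f\|_{L^p},
\]
and $3-3/r=\tfrac{3}{p}-\tfrac{3}{q}$, as desired.

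For the second inequality, I would factor
\[
P_{>N}f = \bigl(P_{>N}|\nabla|^{-s}\bigr)\bigl(|\nabla|^s P_{\geq N}f\bigr),
\]
reducing matters to showing $\|P_{>N}|\nabla|^{-s}g\|_{L^p}\lesssim N^{-s}\|g\|_{L^p}$. I would dyadically decompose $P_{>N}=\sum_{M> N}P_M$; each term $P_M|\nabla|^{-s}$ has symbol $[\psi(\xi/M)-\psi(2\xi/M)]|\xi|^{-s}=M^{-s}m(\xi/M)$ for a fixed $m\in C_c^\infty$, hence is convolution with a Schwartz kernel of $L^1$-norm $\lesssim M^{-s}$. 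When $s>0$, summing the geometric series in $M$ gives the desired bound. The endpoint $s=0$ follows directly from $\|P_{>N}f\|_{L^p}=\|f-P_{\leq N}f\|_{L^p}\lesssim\|f\|_{L^p}$, since $P_{\leq N}$ is convolution with an $L^1$ kernel whose norm is uniform in $N$.

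The only mildly subtle point is the uniform-in-$N$ control of the operator norm in the second estimate, which the dyadic decomposition handles cleanly by converting the non-compactly-supported symbol $[1-\psi(\xi/N)]|\xi|^{-s}$ into a geometric sum over annular pieces. Apart from this, both inequalities reduce to standard applications of Young's inequality and scaling.
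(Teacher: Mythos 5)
The paper does not actually prove this lemma: it is labeled ``standard estimates'' and cited without argument, so there is no ``paper's own proof'' to compare against. Your argument is a correct, standard proof of both inequalities: the fattened projector $\tilde P_N$ with $\tilde P_N P_N=P_N$ reduces the first bound to Young's inequality with a rescaled Schwartz kernel, and the factorization $P_{>N}=(P_{>N}|\nabla|^{-s})(|\nabla|^s P_{\geq N})$ together with the dyadic splitting $P_{>N}=\sum_{M>N}P_M$ reduces the second to the $L^1$-kernel bound $\|P_M|\nabla|^{-s}g\|_{L^p}\lesssim M^{-s}\|g\|_{L^p}$ and a geometric sum in $M$ when $s>0$. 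Your observation about uniformity in $N$ is exactly the point worth flagging. One tiny imprecision at the endpoint $s=0$: you conclude $\|P_{>N}f\|_{L^p}\lesssim\|f\|_{L^p}$, whereas the stated inequality at $s=0$ reads $\|P_{>N}f\|_{L^p}\lesssim\|P_{\geq N}f\|_{L^p}$; this is recovered immediately from the identity $P_{>N}f=P_{>N}P_{\geq N}f$, but you should say so rather than leave the reader to supply it.
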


\subsection{Variational analysis} We recall some well-known properties of the ground state $W(x)=(1+\frac{1}{3}|x|^{2})^{-\frac{1}{2}}$, which belongs to $\dot H^1(\R^3)$ but not $L^2(\R^3)$.

The sharp Sobolev inequality takes the form
\begin{equation}\label{GI}
\|f\|_{L^{6}}\leq C_{GN}\|\nabla f\|_{L^{2}},
\end{equation}
with 
\begin{equation}
\label{C_GN}
C_{GN}=\tfrac{\|W\|_{L^{6}}}{\|\nabla W\|_{L^{2}}}.
\end{equation}
We have the following variational characterization of the ground state: 

If $f$ is a nonzero function satisfying $\|f\|_{L^{6}}= C_{GN}\|\nabla f\|_{L^{2}}$, then 
\[
f(x)=e^{i\theta}\lambda^{\frac{1}{2}}W(\lambda(x-x_{0}))
\]
for some $\theta\in\R$, $\lambda>0$ and $x_{0}\in \R^{3}$.

The ground state $W$ also satisfies the following Pohozaev identities:
\begin{equation}\label{PoQ}
\|\nabla W\|^{2}_{L^{2}}=\| W\|^{6}_{L^{6}} \qtq{and} E^{c}(W)=\tfrac{1}{3}\|\nabla W\|^{2}_{L^{2}}.
\end{equation}

We also remark that for $f\in\dot H^1$ satisfying $\|\nabla  f\|_{L^{2}}\leq \|\nabla W\|_{L^{2}}$, we have
\begin{align}\label{IneW}
	\frac{\|\nabla  f\|^{2}_{L^{2}}}{\|\nabla W\|^{2}_{L^{2}}}
\leq \frac{E^{c}(f)}{E^{c}(W)}
\end{align}
(see \cite[Claim 2.6]{DuyckaMerle2009}).

\begin{lemma}\label{GlobalS}
Assume that $u_{0}\in H^{1}(\R^{3})$ satisfies
\begin{equation}\label{Condition11}
E(u_{0})=E^{c}(W)
\quad \text{and}\quad 
\|\nabla u_{0}\|_{L^{2}}<\|\nabla W\|_{L^{2}}.
\end{equation}
Then the solution $u$ to \eqref{NLS} with $u|_{t=0}=u_0$ satisfies 
\begin{equation}\label{PositiveP}
\|\nabla u(t)\|_{L^{2}}<\|\nabla W\|_{L^{2}}
\end{equation}
throughout its maximal lifespan.  Similarly, if $\|\nabla u_0\|_{L^2}>\|\nabla W\|_{L^2}$, then $\|\nabla u(t)\|_{L^2}>\|\nabla W\|_{L^2}$ throughout the maximal lifespan of $u$. 
\end{lemma}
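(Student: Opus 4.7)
The plan is to combine the sharp Sobolev inequality \eqref{GI}--\eqref{C_GN} with the Pohozaev identities \eqref{PoQ} and conservation of $E$, and then to close via a continuity argument. Throughout I write $y_W := \|\nabla W\|_{L^2}^2$.

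First, I would derive a pointwise lower bound on $E^c(f)$ purely in terms of $\|\nabla f\|_{L^2}$. Raising \eqref{GI} to the sixth power gives $\|f\|_{L^6}^6\leq C_{GN}^6\|\nabla f\|_{L^2}^6$, and \eqref{C_GN} together with \eqref{PoQ} pins the constant down to $C_{GN}^6=\|W\|_{L^6}^6/\|\nabla W\|_{L^2}^6=y_W^{-2}$. Hence, writing $y=\|\nabla f\|_{L^2}^2$, one has $E^c(f)\geq g(y)$ where
\[
g(y) := \tfrac{1}{2}y - \tfrac{1}{6\,y_W^{2}}\,y^3.
\]
A direct calculus check shows that $g$ is strictly increasing on $[0,y_W]$, strictly decreasing on $[y_W,\infty)$, and attains its unique maximum at $y=y_W$, with $g(y_W)=\tfrac{1}{3}y_W=E^c(W)$ (again by \eqref{PoQ}).

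Second, since the $L^4$-term in $E$ is nonnegative, energy conservation yields $E^c(u(t))\leq E(u(t))=E(u_0)=E^c(W)=g(y_W)$ throughout the maximal lifespan of $u$. Combining with the previous lower bound,
\[
g\bigl(\|\nabla u(t)\|_{L^2}^2\bigr) \leq E^c(u(t)) \leq g(y_W).
\]
I would then argue by contradiction. By local well-posedness $u\in C_t H^1_x$, so $t\mapsto\|\nabla u(t)\|_{L^2}^2$ is continuous. Assume $\|\nabla u_0\|_{L^2}<\|\nabla W\|_{L^2}$. If the strict inequality failed somewhere, the intermediate value theorem would furnish a time $t_0$ with $\|\nabla u(t_0)\|_{L^2}^2=y_W$. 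At such $t_0$ the displayed chain collapses to equalities, which forces $E^c(u(t_0))=E(u_0)$ and hence $\|u(t_0)\|_{L^4}^4=0$, i.e.\ $u(t_0)\equiv 0$. This contradicts $\|\nabla u(t_0)\|_{L^2}^2=y_W>0$. So \eqref{PositiveP} persists on the entire maximal lifespan, and the case $\|\nabla u_0\|_{L^2}>\|\nabla W\|_{L^2}$ is handled in exactly the same way, since $\|\nabla u(t)\|_{L^2}$ cannot descend across $y_W$ by continuity.

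There is no real obstacle beyond the algebraic verification that the sharp constant in \eqref{GI}, combined with \eqref{PoQ}, makes the cubic polynomial $g$ peak precisely at $y_W$ with value $E^c(W)$; once this is set up, the equality case pins $u(t_0)$ down to zero and the rest is a soft continuity argument.
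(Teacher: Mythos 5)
Your proof is correct and follows essentially the same route as the paper's: you reduce, via continuity, to the impossibility of a first crossing time $t_0$ with $\|\nabla u(t_0)\|_{L^2}^2=\|\nabla W\|_{L^2}^2$, and derive the contradiction from the sharp Sobolev inequality together with $E^c(u)\leq E(u)$ and conservation of $E$. The only cosmetic difference is that you rederive the pointwise lower bound $E^c(f)\geq g(\|\nabla f\|_{L^2}^2)$ directly from \eqref{GI}--\eqref{PoQ}, whereas the paper cites the equivalent inequality \eqref{IneW} from \cite{DuyckaMerle2009}.
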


\begin{proof} Suppose that there exists $t_{0}$ such  that $\|\nabla u(t_{0})\|^{2}_{L^{2}}=\|\nabla W\|^{2}_{L^{2}}$. By \eqref{IneW} and \eqref{Condition11}, we obtain
\[
1=\frac{\|\nabla  u(t_{0})\|^{2}_{L^{2}}}{\|\nabla W\|^{2}_{L^{2}}}\leq \frac{E^{c}(u(t_{0}))}{E^{c}(W)}
<\frac{E(u(t_{0}))}{E^{c}(W)}=1,
\]
which is a contradiction.  A similar argument treats the remaining case. \end{proof}

%

\subsection{Virial identities}\label{S:virial} Let $\phi$ be a smooth radial function satisfying
\[
\phi(x)=
\begin{cases}
|x|^{2},& \quad |x|\leq 1\\
0,& \quad |x|\geq 2,
\end{cases}
\quad \text{with}\quad 
|\partial^{\alpha}\phi(x)|\lesssim |x|^{2-|\alpha|}
\]
for all multiindices $\alpha$.  Given $R>1$ we define
\begin{equation}\label{WRR}
w_{R}(x)=R^{2}\phi\(\tfrac{x}{R}\)
\end{equation}
and introduce the localized virial functional
\begin{equation}\label{I_R}
I_{R}[u]=2\IM\int_{\R^{3}} \nabla w_{R} \cdot\nabla u \,\overline{u} \,dx.
\end{equation}

We have the following virial identity (see e.g., \cite[Lemma 2.5]{MiaoXuZhao2013}).

\begin{lemma}\label{VirialIden}
Let $R\in [1, \infty)$. Assume that $u$ solves \eqref{NLS}. Then we have
\begin{equation}\label{LocalVirial}
\tfrac{d}{d t}I_{R}[u]=F_{R}[u],
\end{equation}
where
\begin{align*}
F_{R}[u]&:=\int_{\R^{3}}\big\{(- \Delta \Delta w_{R})|u|^{2}+
\Delta[w_{R}]|u|^{4}+4\RE \overline{u_{j}} u_{k} \partial_{jk}[w_{R}]\big\}dx\\
&\quad -\tfrac{4}{3}\int_{\R^{3}}\Delta[w_{R}]|u|^{6}dx\\
&=:F^{c}_{R}[u]+\int_{\R^{3}}\Delta[w_{R}]|u|^{4}dx.
\end{align*}
\end{lemma}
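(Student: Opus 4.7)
The identity is a standard virial computation, and my plan is to differentiate $I_R[u]$ in time, substitute the equation, and integrate by parts in space. Since $w_R$ is compactly supported in $\{|x|\le 2R\}$ with uniformly bounded derivatives, every integration by parts is justified with no boundary contribution for $u(t)\in H^1$. The time differentiation is immediate when $u$ is smooth; for merely $H^1$ solutions one reduces to the smooth case by approximating $u_0$ in $H^1$, applying the identity to the resulting (more regular) solutions, and passing to the limit via $H^1$-stability and Strichartz bounds.

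The key observation that simplifies the calculation is that
\[
\tfrac{d}{dt}\!\int_{\R^3} w_R\,|u|^2\,dx = I_R[u](t),
\]
as follows from $\partial_t|u|^2 = 2\RE(u_t\,\bar u)$, the equation $i\partial_t u = -\Delta u + |u|^2 u - |u|^4 u$, the cancellation $\IM(|u|^{p-1}u\,\bar u)=0$ (which kills both nonlinear contributions at this order), and one integration by parts against $\nabla w_R$. Thus Lemma~\ref{VirialIden} is really the second time derivative of $\int w_R|u|^2\,dx$, and the task is to differentiate $I_R[u]$ once more and substitute the equation.

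The linear contribution (from $-\Delta u$) produces $F_R^c[u]$ via the classical Morawetz-type computation: two integrations by parts move derivatives from $u$ onto $w_R$ and yield the Hessian pairing $4\RE(\overline{u_j}u_k\,\partial_{jk}w_R)$ together with the biharmonic term $-(\Delta\Delta w_R)|u|^2$. For a nonlinearity of the form $\mu|u|^{p-1}u$, a short manipulation — writing $g(s)=\mu s^{(p-1)/2}$ and using the chain-rule identity $|u|^2 g'(|u|^2)\,\nabla|u|^2 = \nabla\!\bigl(\mu\tfrac{p-1}{p+1}|u|^{p+1}\bigr)$, followed by one integration by parts against $\nabla w_R$ — shows that the contribution to $\tfrac{d}{dt}I_R[u]$ equals
\[
2\mu\,\tfrac{p-1}{p+1}\int_{\R^3}(\Delta w_R)\,|u|^{p+1}\,dx.
\]
Taking $(\mu,p)=(1,3)$ yields $+\!\int\!\Delta w_R\,|u|^4\,dx$, and $(\mu,p)=(-1,5)$ yields $-\tfrac{4}{3}\!\int\!\Delta w_R\,|u|^6\,dx$, reproducing exactly the expression claimed for $F_R[u]$. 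The only real obstacle is the bookkeeping of coefficients and signs in the nonlinear terms; there is no genuine analytic subtlety beyond the routine $H^1$-approximation argument needed to justify the time differentiation.
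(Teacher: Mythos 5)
Your proof is correct, and the coefficient bookkeeping checks out: for $(i\partial_t+\Delta)u = \mu|u|^{p-1}u$ the localized-virial nonlinear contribution is indeed $2\mu\tfrac{p-1}{p+1}\int\Delta w_R\,|u|^{p+1}\,dx$, which produces $+\int\Delta w_R\,|u|^4$ for $(\mu,p)=(1,3)$ and $-\tfrac43\int\Delta w_R\,|u|^6$ for $(\mu,p)=(-1,5)$, matching $F_R[u]$. The paper itself does not prove this lemma but simply cites Lemma~2.5 of Miao--Xu--Zhao, so there is no internal argument to compare against; what you have written is the standard Morawetz/virial calculation that the cited reference carries out, including the reduction to regular solutions via $H^1$ approximation and Strichartz-based stability. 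One small expository suggestion: when you differentiate $I_R[u]$ a second time, it is cleaner to state explicitly that after one integration by parts and a complex-conjugate symmetry argument you arrive at
\[
\tfrac{d}{dt}I_R[u] = -4\IM\int u_t\,\nabla\bar u\cdot\nabla w_R\,dx - 2\IM\int u_t\,\bar u\,\Delta w_R\,dx,
\]
and then substitute $u_t = i\Delta u - i|u|^2u + i|u|^4u$ term by term; this makes the split into $F_R^c$ (from $i\Delta u$) and the two power contributions transparent and avoids any ambiguity about where the $\Delta w_R$ factor enters.
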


When $R=\infty$, we denote $F^{c}_{\infty}[u]=8G[u]$,
where
\[
G[f]:=\|\nabla f\|^{2}_{L_{x}^{2}}-\|f\|^{6}_{L_{x}^{6}}.
\]

\begin{lemma}\label{Virialzero}
Fix $R\in [1, \infty]$, $\theta\in \R$ and $\lambda >0$. Then we have
\[
F^{c}_{R}[e^{i\theta}\lambda^{\frac{1}{2}}W(\lambda\cdot)]=0.
\]
\end{lemma}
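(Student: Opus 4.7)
The plan is to exploit the fact that $v := e^{i\theta}\lambda^{1/2}W(\lambda \cdot)$ itself solves the critical elliptic equation $-\Delta v = |v|^4 v$ (immediate from rescaling and phase invariance of \eqref{ellipC}). I will then establish the slightly stronger claim that $F^c_R[v]=0$ for any sufficiently decaying solution $v$ of this elliptic equation, which removes the need to track $\theta$, $\lambda$, and $W$ separately. The idea is to integrate by parts twice in each term of $F^c_R[v]$ so that every contribution is represented as an integral against $\Delta w_R$, after which the accumulated coefficients cancel.

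The two key pointwise identities, both consequences of $-\Delta v = |v|^4 v$, are
\[
\Delta(|v|^{2}) = 2\RE(\overline{v}\,\Delta v)+2|\nabla v|^{2} = -2|v|^{6}+2|\nabla v|^{2} \qtq{and} \RE(\overline{v}\,\nabla v)=\tfrac{1}{2}\nabla(|v|^{2}).
\]
For the first summand of $F^c_R[v]$, two integrations by parts give
\[
-\int \Delta^{2} w_{R}\,|v|^{2}\,dx = -\int \Delta w_{R}\,\Delta(|v|^{2})\,dx = 2\int |v|^{6}\Delta w_{R}\,dx - 2\int |\nabla v|^{2}\Delta w_{R}\,dx.
\]
For the mixed-derivative term $4\int \RE(\overline{v_{j}} v_{k})\,\partial_{jk}[w_{R}]\,dx$, an integration by parts in $j$ produces two pieces: one involving $\RE(\overline{\Delta v}\,\nabla v) = -|v|^{4}\,\RE(\overline{v}\,\nabla v) = -\tfrac{1}{6}\nabla(|v|^{6})$, and another involving $\RE(\overline{v_{j}} v_{jk}) = \tfrac{1}{2}\partial_{k}|\nabla v|^{2}$. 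A second integration by parts in $k$ in each piece then gives
\[
4\int \RE(\overline{v_{j}} v_{k})\,\partial_{jk}[w_{R}]\,dx = -\tfrac{2}{3}\int |v|^{6}\Delta w_{R}\,dx + 2\int |\nabla v|^{2}\Delta w_{R}\,dx.
\]

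Adding these contributions to the remaining summand $-\tfrac{4}{3}\int |v|^{6}\Delta w_{R}\,dx$, the coefficients of $\int|\nabla v|^{2}\Delta w_{R}$ cancel ($-2+2=0$) and those of $\int |v|^{6}\Delta w_{R}$ vanish algebraically ($2-\tfrac{2}{3}-\tfrac{4}{3}=0$). This yields $F^c_R[v]=0$ for every finite $R \in [1,\infty)$, the integrations by parts being justified by the compact support of $w_R$ and the decay $W(x)\sim |x|^{-1}$. The case $R=\infty$ is separate and immediate: by definition $F^c_\infty[v]=8G[v]$, and scaling together with Pohozaev \eqref{PoQ} gives $G[v]=G[W]=\|\nabla W\|_{L^{2}}^{2}-\|W\|_{L^{6}}^{6}=0$.

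The main obstacle is careful bookkeeping of signs and factors across the two integrations by parts in the mixed-derivative term, where one must correctly symmetrize $\partial_{jk}$ and apply the elliptic equation in the right order; the final cancellation itself is purely algebraic.
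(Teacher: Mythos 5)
Your proof is correct, but it takes a genuinely different route from the paper's. The paper observes that $v(t,x):=e^{i\theta}\lambda^{1/2}W(\lambda x)$ is a \emph{static} solution of the pure quintic NLS \eqref{ECgNLS} with $I_R[v]\equiv 0$ (because $\overline{v}\nabla v$ is real), so the quintic analogue of the virial identity in Lemma~\ref{VirialIden} gives $F^c_R[v]=\tfrac{d}{dt}I_R[v]=0$ in one line — a dynamical argument that silently piggybacks on a computation already done elsewhere. You instead work directly from the elliptic equation $-\Delta v=|v|^4 v$ and carry out the two integrations by parts by hand, reducing each summand of $F^c_R[v]$ to integrals against $\Delta w_R$ and checking that the coefficients of $\int|\nabla v|^2\Delta w_R$ and $\int|v|^6\Delta w_R$ cancel ($-2+2=0$ and $2-\tfrac23-\tfrac43=0$). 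I checked the intermediate identities: $\Delta|v|^2=-2|v|^6+2|\nabla v|^2$, $\RE(\overline{\Delta v}\nabla v)=-\tfrac16\nabla(|v|^6)$, and $\RE(\overline{v_j}v_{jk})=\tfrac12\partial_k|\nabla v|^2$ are all correct, and the boundary terms vanish since $w_R$ is compactly supported and $W\in\dot H^1$ with $W(x)\sim|x|^{-1}$. Your approach is more self-contained (it does not require restating the virial identity for a different equation than the one in Lemma~\ref{VirialIden}) and more general in that it applies to any sufficiently decaying solution of the critical elliptic equation; the paper's version is shorter and avoids the bookkeeping entirely. Your treatment of the $R=\infty$ case matches the paper's.
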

\begin{proof}
If $R=\infty$, then by a change of variable and \eqref{PoQ} we get
\[
F^c_{\infty}[e^{i\theta}\lambda^{\frac{1}{2}}W(\lambda\cdot)]
=8G[e^{i\theta}\lambda^{\frac{1}{2}}W(\lambda\cdot)]
=8G[W]=0.
\]
Now assume $R\in [1, \infty)$. Fix $\theta\in \R$ and $\lambda >0$. Since 
\[
u(t,x)=e^{i\theta}\lambda^{\frac{1}{2}}W(\lambda x)
\]
is a solution to \eqref{ECgNLS} and $I_{R}[e^{i\theta}\lambda^{\frac{1}{2}}W(\lambda\cdot)]=0$, Lemma~\ref{VirialIden} implies
\[
F^{c}_{R}[e^{i\theta}\lambda^{\frac{1}{2}}W(\lambda\cdot)]=0.
\]
\end{proof}

Combining Lemmas~\ref{VirialIden} and \ref{Virialzero} yields the following, which we will use to incorporate the modulation analysis into the virial analysis. 

\begin{lemma}\label{VirialModulate}
Consider $R\in [1, \infty]$, $\chi: I\to \R$, $\theta: I\to \R$ and
$\lambda: I\to \R$. If $u$ is a solution  to \eqref{NLS}, then we have
\begin{align}
\tfrac{d}{d t}I_{R}[u]=&F^{c}_{\infty}[u(t)]\nonumber\\
&+F_{R}[u(t)]-F^{c}_{\infty}[u(t)] \label{Modu11}\\ 
&-\chi(t)\big\{F^{c}_{R}[e^{i\theta(t)}\lambda(t)^{\frac{1}{2}}W(\lambda(t)\cdot)]-F^{c}_{\infty}[e^{i\theta(t)}\lambda(t)^{\frac{1}{2}}W(\lambda(t)\cdot)]\big\}.\label{Modu22}
\end{align}
\end{lemma}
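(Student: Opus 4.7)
The identity is a purely algebraic rearrangement of the two preceding lemmas, so the plan is simply to chain them together. First I would apply Lemma~\ref{VirialIden} to rewrite the left-hand side as $\tfrac{d}{dt}I_R[u] = F_R[u(t)]$. Then adding and subtracting $F^c_\infty[u(t)]$ on the right reproduces the sum of the first two lines of the claimed identity with no change in value.

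It remains only to verify that the quantity appearing on the third line is identically zero, so that attaching $-\chi(t)$ times it to the right-hand side does not affect the equality. This is where Lemma~\ref{Virialzero} enters: applied at each fixed $t$ with the instantaneous parameters $(\theta(t), \lambda(t))$, it gives $F^c_R[e^{i\theta(t)}\lambda(t)^{1/2}W(\lambda(t)\cdot)] = 0$ for every $R \in [1, \infty]$, and in particular the difference in braces between $R$ finite and $R = \infty$ vanishes term by term. Multiplying by the (arbitrary) scalar function $\chi(t)$ and subtracting is therefore a free operation.

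There is essentially no obstacle here; the content of the statement is the decomposition itself, not its derivation. The point of writing $\tfrac{d}{dt}I_R[u]$ in this form is forward-looking: the main term $F^c_\infty[u(t)] = 8G[u(t)]$ will be coercive via the variational characterization of $W$, the line \eqref{Modu11} packages both the localization error and the contribution of the defocusing cubic nonlinearity (to be absorbed using the radial Sobolev embedding and the decay of $\nabla w_R$ outside $|x|\le R$), and the free subtraction on \eqref{Modu22} is meant to be paired with \eqref{Modu11} during the modulation analysis to extract a cancellation whenever $u(t)$ sits near the orbit $\{e^{i\theta}\lambda^{1/2}W(\lambda\cdot)\}$.
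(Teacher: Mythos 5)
Your proof is correct and is precisely the argument the paper intends (the paper itself supplies no written proof beyond the remark that the lemma follows by combining Lemma~\ref{VirialIden} with Lemma~\ref{Virialzero}). You correctly observe that Lemma~\ref{Virialzero} makes both terms in the braces on line~\eqref{Modu22} vanish individually, so the subtracted quantity is $-\chi(t)\cdot 0 = 0$ for any $\chi$, and the first two lines are just $F_R[u(t)]$ rewritten by adding and subtracting $F^c_\infty[u(t)]$.
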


\subsection{Well-posedness, stability, and concentration-compactness}

We first recall the following result from \cite[Proposition 3.2]{TaoVisanZhang2007}. 

\begin{proposition}[Local well-posedness]\label{LWP}
For any $u_{0}\in H^{1}(\R^{3})$, there exists a unique solution $u\in C(I, H^{1}(\R^{3}))$ to \eqref{NLS} on some interval $I=(-T_{{min}}, T_{{max}})\ni 0$ such that:
\begin{enumerate}[label=\rm{(\roman*)}]
\item  The solution satisfies the conservation of energy and mass 
\begin{equation*}
E(u(t))=E(u_{0})\quad  \text{and} \quad M(u(t))=M(u_{0}) 
\end{equation*}
for all $t\in I$, where
\[
M(u)=\tfrac{1}{2}\int_{\R^{3}}|u|^{2}dx.
\]
\item $u\in L_{t}^{q}H_{x}^{1,r}(K\times \R^{3})$  for every  compact time interval $K\subset I$ and for any admissible pair $(q,r)$.
\item  If $T_{{max}}$ is finite, then 
\[
\text{ either 
$\lim_{t\rightarrow T_{{max}}}\|u(t)\|_{\dot{H}^{1}_{x}}=\infty$ \quad or\quad  $\|\nabla u\|_{S^{0}((0,T_{{max}})\times \R^{3})}=\infty$}.
\]
 An analogous statement holds when $T_{{min}}$ is finite.
\end{enumerate}
\end{proposition}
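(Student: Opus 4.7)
Since the quintic term is $H^1$-critical in three dimensions while the cubic term is $H^1$-subcritical, the plan is to implement a standard Cazenave--Weissler-type contraction mapping argument adapted to the energy-critical setting, essentially as done for \eqref{ECgNLS} in \cite{TaoVisanZhang2007}. I would work in the space $X(I)$ of functions $u\in C(I;H^1(\R^3))$ endowed with the norm
\[ \|u\|_{X(I)} := \|\langle\nabla\rangle u\|_{S^0(I\times\R^3)} + \|u\|_{L^{10}_{t,x}(I\times\R^3)}, \]
and run a Banach contraction for the Duhamel map
\[ \Phi[u](t) = e^{it\Delta}u_0 - i\int_0^t e^{i(t-s)\Delta}\bigl(|u|^2 u - |u|^4 u\bigr)(s)\,ds. \]

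The key nonlinear estimates come from combining Strichartz with Sobolev embedding. For the cubic (subcritical) term, H\"older on a short interval yields a factor of $|I|^\alpha$ with $\alpha>0$, giving smallness by shrinking $I$. For the quintic (critical) term I would use the chain-rule bound
\[ \|\langle\nabla\rangle(|u|^4 u)\|_{L^2_t L^{6/5}_x}\lesssim \|u\|_{L^{10}_{t,x}}^4\,\|\langle\nabla\rangle u\|_{L^{10}_t L^{30/13}_x}; \]
to produce the crucial small factor $\|u\|_{L^{10}_{t,x}}^4$, I would choose $I\ni 0$ so that $\|e^{it\Delta}u_0\|_{L^{10}_{t,x}(I\times\R^3)}<\eta$. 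This is possible because $e^{it\Delta}u_0\in L^{10}_{t,x}(\R\times\R^3)$ for any $u_0\in H^1$, since $(10,30/13)$ is admissible for $\dot H^1$ data. Absolute continuity then locates a small subinterval, and Strichartz closes the contraction on a closed ball in $X(I)$. Uniqueness follows from the same estimate applied to the difference of two solutions.

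For the conservation laws in (i), I would first verify $\partial_t M = \partial_t E = 0$ for smooth data ($u_0\in H^2$, where the equation is classical and the computation is direct), then approximate a general $H^1$ datum by a sequence in $H^2$, use the Lipschitz dependence built into the contraction to propagate convergence in $C(I;H^1)$, and pass to the limit. Part (ii) is essentially built into the fixed-point space: since $\|\langle\nabla\rangle u\|_{S^0}$ is bounded on the existence interval, persistence of regularity for any admissible pair follows from another application of Strichartz to the Duhamel formula. For the blow-up alternative in (iii) I would argue by contradiction: if $T_{\max}<\infty$ while both $\|u(t)\|_{\dot H^1}$ stays bounded and $\|\nabla u\|_{S^0((0,T_{\max})\times\R^3)}$ is finite, then Strichartz applied to the Duhamel representation of $u(t_1)-u(t_2)$ yields a Cauchy property in $H^1$ as $t\to T_{\max}^-$; local well-posedness from the limiting datum extends $u$ past $T_{\max}$, contradicting maximality.

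The main obstacle is the energy-critical character of the quintic term: unlike for the cubic part alone, the contraction does not close on a uniform time interval depending only on $\|u_0\|_{H^1}$, but only on an interval where $\|e^{it\Delta}u_0\|_{L^{10}_{t,x}}$ is small. This is the familiar difficulty for Cauchy problems in energy-critical settings, and it is resolved by invoking the global-in-time Strichartz bound for the free evolution together with an absolute continuity argument to locate the small subinterval, rather than by the naive Picard scheme that suffices for the cubic term alone.
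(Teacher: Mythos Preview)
The paper does not prove this proposition at all; it simply imports it verbatim from \cite[Proposition~3.2]{TaoVisanZhang2007}. Your sketch is a correct outline of the standard contraction-mapping proof in the energy-critical setting (which is essentially what Tao--Visan--Zhang carry out), so there is no discrepancy to discuss.
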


\begin{remark}[Persistence of regularity]\label{Rpr}
Standard arguments also yield the following: if $u: \R\times \R^{3}\rightarrow \C$ a solution to \eqref{NLS} with $\|u\|_{L^{10}_{t,x}(\R\times\R^{3})}\leq L<\infty$, then
\begin{equation}\label{S1}
\||\nabla|^{s}u\|_{S^{0}(\R\times\R^{3})}\leq C(L, M(u(t_{0})))\||\nabla|^{s}u(t_{0})\|_{L^{2}_{x}}, \quad s\in\{0,1\},
\end{equation}
for any $t_{0}\in \R$.   This is proven by combining Strichartz estimates and a standard continuity argument.  The relevant nonlinear estimates are as follows: 
\begin{equation}\label{E12}
\begin{split}
\||\nabla|^{s} |u|^{4}u  \|_{L_{t}^{\frac{10}{9}}L^{\frac{30}{17}}_{x}}\lesssim
\| u  \|^{4}_{L_{t,x}^{10}}\||\nabla|^{s} u  \|_{L_{t}^{2}L^{6}_{x}},
\end{split}
\end{equation}
and 
\begin{equation}\label{E33}
\begin{split}
\||\nabla|^{s} |u|^{2}u  \|_{L_{t}^{\frac{5}{3}}L^{\frac{30}{23}}_{x}}\lesssim &
\| u  \|_{L_{t}^{\infty}L_{x}^{2}}\| |u||\nabla|^{s} u  \|_{L_{t}^{\frac{5}{3}}L^{\frac{15}{4}}_{x}}\\
\lesssim& \| u  \|_{L_{t}^{\infty}L^{2}_{x}}\| u  \|_{L_{t,x}^{10}}\||\nabla|^{s} u  \|_{L_{t}^{2}L^{6}_{x}}.
\end{split}
\end{equation}
for $s\in\{0,1\}$. 
\end{remark}

This result leads to the following sufficient condition for scattering in $H^1$:

\begin{proposition}\label{ScatterCondi}
Let $u_{0}\in H^{1}(\R^{3})$, and let $u$ be the solution to \eqref{NLS} with $u|_{t=0}=u_{0}$. If $u$ is global and $u\in L_{t,x}^{10}(\R\times\R^3)$, then $u$ scatters in $H^{1}(\R^{3})$ as $t\to \pm \infty$.
\end{proposition}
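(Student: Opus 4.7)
The plan is to upgrade the bulk spacetime bound $\|u\|_{L^{10}_{t,x}}<\infty$ to full Strichartz control on $\langle\nabla\rangle u$, construct the scattering states via the Duhamel integral, and close by a tail estimate using the nonlinear bounds of Remark~\ref{Rpr}.

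First, I would invoke the persistence of regularity \eqref{S1} with $t_0=0$ and $s\in\{0,1\}$: since $u_0\in H^1$ and $\|u\|_{L^{10}_{t,x}(\R\times\R^3)}\le L<\infty$, we obtain $\langle\nabla\rangle u\in S^0(\R\times\R^3)$, so in particular $\|u\|_{L^2_tL^6_x(\R\times\R^3)}$ and $\|\nabla u\|_{L^2_tL^6_x(\R\times\R^3)}$ are finite. Mass conservation gives $\|u\|_{L^\infty_tL^2_x}^2=2M(u_0)$.

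Next I would define, as a candidate forward scattering state,
\[
u_+ := u_0 + i\int_0^\infty e^{-is\Delta}\bigl(|u|^2u-|u|^4u\bigr)(s)\,ds,
\]
and observe via the $L^2\to L^2$ dual endpoint of Strichartz together with the nonlinear estimates \eqref{E12} and \eqref{E33} (applied for $s=0$ and $s=1$) that the integrand lies in $L^{10/9}_t L^{30/17}_x+L^{5/3}_tL^{30/23}_x$ with derivatives, so $u_+\in H^1$. The Duhamel representation then yields
\[
u(t)-e^{it\Delta}u_+ \;=\; -i\int_t^\infty e^{i(t-s)\Delta}\bigl(|u|^2u-|u|^4u\bigr)(s)\,ds,
\]
and Strichartz bounds the $H^1$-norm of this expression by
\[
\bigl\|\langle\nabla\rangle(|u|^4u)\bigr\|_{L^{10/9}_tL^{30/17}_x((t,\infty)\times\R^3)}+\bigl\|\langle\nabla\rangle(|u|^2u)\bigr\|_{L^{5/3}_tL^{30/23}_x((t,\infty)\times\R^3)}.
\]

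Finally, applying \eqref{E12} and \eqref{E33} on the slab $(t,\infty)\times\R^3$, each summand is controlled by a product in which at least one factor is $\|u\|_{L^{10}_{t,x}((t,\infty)\times\R^3)}$ or $\|\langle\nabla\rangle u\|_{L^2_tL^6_x((t,\infty)\times\R^3)}$, the remaining factors being bounded uniformly in $t$ by the first step and by mass conservation. By dominated convergence these tail norms vanish as $t\to\infty$, giving $\|u(t)-e^{it\Delta}u_+\|_{H^1}\to 0$. The construction of $u_-$ and the limit as $t\to-\infty$ are identical after time-reversal. There is no real obstacle, as every ingredient, including the dual Strichartz pairings $(10/9,30/17)$ and $(5/3,30/23)$, is supplied verbatim by Remark~\ref{Rpr}; the only small care is that the derivative is distributed onto a single factor via the chain-rule/fractional-chain-rule, which is exactly the content of \eqref{E12}--\eqref{E33}.
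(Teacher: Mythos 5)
Your proposal is correct and is essentially the standard argument that the paper implicitly invokes (the paper states Proposition~\ref{ScatterCondi} as a direct consequence of Remark~\ref{Rpr} without giving a written proof). One small bookkeeping point: for the Duhamel-tail identity to read $u(t)-e^{it\Delta}u_+ = i\int_t^\infty e^{i(t-s)\Delta}F(s)\,ds$ with $F=|u|^2u-|u|^4u$, you want $u_+ = u_0 - i\int_0^\infty e^{-is\Delta}F(s)\,ds$; with the $+i$ as you wrote it the tail integral does not drop out. This is a trivial sign to fix and does not affect the validity of the argument, since the dual-Strichartz pairings $(10/9,30/17)$ and $(5/3,30/23)$ and the nonlinear estimates \eqref{E12}--\eqref{E33} carry the proof exactly as you describe.
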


We next record a stability result for \eqref{NLS} (see e.g. \cite[Proposition 6.3]{KillipOhPoVi2017}):

\begin{lemma}[Stability result]\label{stabi}
Let $t_0\in I\subset\R$. Suppose $\tilde{u}:I\times\R^3\to\C$ solves
\[ 
(i\partial_{t}+\Delta) \tilde{u} =|\tilde{u}|^{2}\tilde{u}-|\tilde{u}|^{4}\tilde{u}+e,\quad  \tilde{u}(t_{0})=\tilde{u}_{0}
\]
for some $e:I\times\R^{3}\rightarrow \C$. Assume also that
\[
\| \tilde{u}  \|_{L_{t}^{\infty}H^{1}_{x}(I\times\R^{3})}+\|\tilde u\|_{L_{t,x}^{10}(I\times\R^3)}\lesssim A.
\]
Let $u_{0}\in H^{1}(\R^{3})$, and suppose $\|u_{0}\|_{L^{2}}\leq M$ for some $M>0$.  Then there exists $\epsilon_{0}=\epsilon_0(A,M)>0$ such that if $0<\epsilon<\epsilon_{0}$ and
\[
\|u_{0}-\tilde{u}_{0} \|_{\dot{H}^{1}_{x}}\leq \epsilon\qtq{and} \|\nabla e  \|_{L_{t,x}^{\frac{10}{7}}(I\times\R^{3})}\leq \epsilon,
\]
then there exists a unique solution $u: I\times \R^{3}\to \C$ to \eqref{NLS} with the specified initial data $u_{0}$ at the time $t=t_{0}$
satisfying
\begin{equation}\label{SCN55}
\|\nabla (u-\tilde{u})\|_{S^{0}(I)}\lesssim_{A,M}\epsilon.
\end{equation}
\end{lemma}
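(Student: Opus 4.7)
\medskip

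The plan is to use the standard partition-plus-bootstrap argument from perturbation theory for critical NLS, adapted to handle the energy-subcritical cubic term via a mass bound. Set $w = u - \tilde u$, which formally solves
\[
(i\partial_t + \Delta) w = \bigl[|\tilde u + w|^2(\tilde u + w) - |\tilde u|^2 \tilde u\bigr] - \bigl[|\tilde u + w|^4(\tilde u + w) - |\tilde u|^4 \tilde u\bigr] - e,
\]
with $w(t_0) = u_0 - \tilde u_0$. First I would invoke Proposition~\ref{LWP} to produce $u$ on a maximal interval, so $w$ exists there; the goal is then to propagate the $\dot H^1$-closeness estimate \eqref{SCN55} to all of $I$. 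Note that by mass conservation and the $L^\infty_t H^1_x$ bound on $\tilde u$, one immediately has $\|w\|_{L^\infty_t L^2_x(I)} \lesssim M + A$ on any interval where $u$ exists, which supplies the mass-type control needed to handle the cubic terms.

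Next I would partition $I$ into $J = J(A,\eta)$ consecutive subintervals $I_1, \dots, I_J$ (with $t_0$ the left endpoint of $I_1$) such that
\[
\|\tilde u\|_{L^{10}_{t,x}(I_j)} \leq \eta,
\]
where $\eta = \eta(A,M) > 0$ is a small constant to be fixed. On each $I_j$, I would apply Strichartz in the form of the lemma stated above to $w$, placing the quintic difference in $L^{10/9}_t L^{30/17}_x$ and the cubic difference in $L^{5/3}_t L^{30/23}_x$. The relevant pointwise bounds
\[
||u|^4 u - |\tilde u|^4 \tilde u| \lesssim (|\tilde u|^4 + |w|^4)|w|,\qquad ||u|^2 u - |\tilde u|^2 \tilde u| \lesssim (|\tilde u|^2 + |w|^2)|w|
\]
combined with the fractional chain rule and the estimates \eqref{E12}–\eqref{E33} of Remark~\ref{Rpr} yield, schematically,
\[
\|\nabla w\|_{S^0(I_j)} \leq C\|\nabla w(t_j)\|_{L^2} + C\epsilon + C\bigl(\eta + \|w\|_{L^{10}_{t,x}(I_j)}\bigr)^{\!4}\|\nabla w\|_{S^0(I_j)} + C_{A,M}\bigl(\eta + \|w\|_{L^{10}_{t,x}(I_j)}\bigr)^{\!2}\|\nabla w\|_{S^0(I_j)},
\]
where $t_j$ is the left endpoint of $I_j$. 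A continuity argument in $\|\nabla w\|_{S^0(I_j)}$ (starting from $t_j$, where the quantity vanishes in the limit) closes this into $\|\nabla w\|_{S^0(I_j)} \leq 2C\|\nabla w(t_j)\|_{L^2} + 2C\epsilon$, provided $\eta$ is small enough depending on $A, M$. Sobolev embedding $\dot H^1_x \hookrightarrow L^6_x$ and interpolation with $L^{10}_t L^{30/13}_x$ then gives $\|w\|_{L^{10}_{t,x}(I_j)} \lesssim \|\nabla w\|_{S^0(I_j)}$, so the bootstrap hypothesis self-consistently holds.

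Finally I would iterate across the $J$ subintervals: at each step $\|\nabla w(t_{j+1})\|_{L^2}$ is bounded by a multiple of $\|\nabla w(t_j)\|_{L^2} + \epsilon$, so telescoping gives $\|\nabla w\|_{S^0(I)} \leq C(A,M)\epsilon$, which is the desired \eqref{SCN55}. One runs the same argument to the left of $t_0$. The main obstacle is the presence of the subcritical cubic nonlinearity, which is not scale-invariant under the energy-critical scaling and therefore cannot be controlled by $L^{10}_{t,x}$-smallness alone; this is precisely why both a mass hypothesis and mass conservation for $u$ must enter, to uniformly bound $\|w\|_{L^\infty_t L^2_x}$ across all subintervals. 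The other point requiring mild care is that the small quantity $\epsilon_0$ must be chosen depending on $J$ (hence on $A$), since the constants accumulate multiplicatively over the subintervals; tracking this dependence yields $\epsilon_0 = \epsilon_0(A,M)$ as claimed.
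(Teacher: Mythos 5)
The paper does not give a proof of this lemma; it imports it from \cite[Proposition~6.3]{KillipOhPoVi2017}. Your sketch reproduces the standard argument from that reference: split $I$ into $J(A,\eta)$ subintervals on which $\|\tilde u\|_{L^{10}_{t,x}}\le\eta$, run a continuity argument for $w=u-\tilde u$ on each subinterval using the dual Strichartz pairs $L^{10/9}_tL^{30/17}_x$ (quintic) and $L^{5/3}_tL^{30/23}_x$ (cubic), and telescope. The observation that mass conservation supplies the uniform $L^\infty_tL^2_x$ bound on $w$ needed to close the cubic estimate is exactly the right adaptation for the combined nonlinearity.

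One step is missing, and it is not cosmetic. When the Leibniz/fractional chain rule distributes $\nabla$ across the cubic and quintic \emph{differences}, a portion of the resulting terms carry $\nabla\tilde u$ rather than $\nabla w$: schematically, $\nabla\tilde u\,|\tilde u|^{a}|w|^{b}$ with $a+b\in\{2,4\}$ and $b\ge1$. To bound these in the dual Strichartz norms you need $\|\nabla\tilde u\|_{L^2_tL^6_x(I)}$ (more generally $\|\nabla\tilde u\|_{S^0(I)}$) under control, and this is \emph{not} one of the stated hypotheses: $\|\tilde u\|_{L^\infty_t H^1_x}\lesssim A$ only controls $\|\nabla\tilde u\|_{L^\infty_tL^2_x}$. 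So one must first prove, as a preliminary step, that $\|\nabla\tilde u\|_{S^0(I)}\lesssim_{A,M}1$. This follows by running the same subinterval bootstrap on the equation satisfied by $\tilde u$ itself, using the $L^{10}_{t,x}$-smallness on each $I_j$, the bound $\|\nabla e\|_{L^{10/7}_{t,x}}\le\epsilon$, and the estimates \eqref{E12}--\eqref{E33}. Your schematic display writes $C_{A,M}$ prefactors whose origin is precisely this preliminary Strichartz bound on $\nabla\tilde u$; without stating and proving that bound, the nonlinear estimates on each $I_j$ do not close. With that step supplied, the rest of your argument is correct and matches the cited source.
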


We next import the following linear profile decomposition (see
 \cite[Theorem 7.5]{KillipOhPoVi2017} for more details).  Due to the radial assumption, all of the spatial translation parameters may be set to zero. 
  
\begin{theorem}[Linear profile decomposition]\label{Profi}
Let $\left\{f_{n}\right\}_{n\in \N}$ be a bounded sequence of radial functions in $H^{1}(\R^{3})$. Passing to a subsequence, there exist  $J^{\ast}\in \left\{0,1,2,\ldots\right\}\cup\left\{\infty\right\}$, nonzero profiles $\{\phi^{j}\}^{J^{\ast}}_{j=1}\subset \dot{H}_{x}^{1}(\R^{3})\setminus\left\{0\right\}$, and parameters $\{(\lambda^{j}_{n}, t^{j}_{n})\}^{J^{\ast}}_{j=1}\subset [1, +\infty)\times \R\times\R^{3}$ satisfying for any fixed $j$,
\begin{itemize}
\item $\lambda^{j}_{n}\equiv 1$ or $\lambda^{j}_{n}\rightarrow +\infty$, and $t^{j}_{n}\equiv 0$ or $t^{j}_{n}\rightarrow\pm\infty$,
	\item If $\lambda^{j}_{n}\equiv 1$ then $\phi^{j}\in L^{2}(\R^{3})$.
\end{itemize}
Also we can write
\begin{equation}\label{Dcom}
f_{n}=\sum^{J}_{j=1}\phi_{n}^{j}+R_n^J
\end{equation}
for each finite $1\leq J\leq J^{\ast}$, with
\begin{equation}\label{fucti}
\phi_{n}^{j}(x):=
\begin{cases} 
[e^{it^{j}_{n}\Delta}\phi^{j}](x), \quad \mbox{if $\lambda^{j}_{n}\equiv 1$},\\
(\lambda^{j}_{n})^{\frac{1}{2}}[e^{it^{j}_{n}\Delta} P_{\geq (\lambda^{j}_{n})^{-\theta}}\phi^{j}]\( \lambda^{j}_{n}x\),
\quad \mbox{if $\lambda^{j}_{n}\rightarrow +\infty$},
\end{cases}
\end{equation}
for some $0<\theta<1$. Furthermore,  we have the following properties:
\begin{itemize}
\item Vanishing of the remainder: for every $\eps>0$, there exists $J_0=J_0(\eps)$ such that 
\begin{equation}\label{Sr} 
\limsup_{n\rightarrow\infty}\|e^{it\Delta}R_n^J\|_{L^{10}_{t,x}(\R\times\R^{3})}<\epsilon
   \end{equation}
   for $J\geq J_0$.
\item Weak convergence of the remainder:
		\begin{equation}\label{Wcp}
e^{-it^{j}_{n}\Delta}[(\lambda^{j}_{n})^{-\frac{1}{2}}R_n^J((\lambda^{j}_{n})^{-1}x)]\rightharpoonup 0\quad \mbox{in}\,\,
\dot{H}^{1}_{x}, \quad \mbox{as $n\rightarrow\infty$.}
     \end{equation}
\item Orthogonality: for all $1\leq j\neq k\leq J^{\ast}$
\begin{equation}\label{Pow}
\lim_{n\rightarrow \infty}\left[ \tfrac{\lambda^{j}_{n}}{\lambda^{k}_{n}}+\tfrac{\lambda^{k}_{n}}{\lambda^{j}_{n}} +
\lambda^{j}_{n}\lambda^{k}_{n}{|t^{j}_{n}(\lambda^{j}_{n})^{-2}-t^{k}_{n}(\lambda^{k}_{n})^{-2}|}\right]=\infty.
\end{equation}		
\item Decoupling: for any $J\in \N$,
\begin{align}\label{PE11}
\| f_{n}  \|^{p}_{L^{p}_{x}}&=\sum^{J}_{j=1}\| \phi^{j}_{n}  \|^{p}_{L^{p}_{x}}+\| R_n^J  \|^{p}_{L^{p}_{x}}+o_{n}(1),\quad p\in\{2,4,6\}\\
\| f_{n}  \|^{2}_{\dot{H}^{1}_{x}}&=\sum^{J}_{j=1}\| \phi^{j}_{n}  \|^{2}_{\dot{H}^{1}_{x}}+
\| R_n^J  \|^{2}_{\dot{H}^{1}_{x}}+o_{n}(1).
\end{align}
\end{itemize}
\end{theorem}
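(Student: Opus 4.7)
The theorem is a standard hybrid $H^1$/$\dot H^1$ profile decomposition for radial data. I would prove it by the iterative bubble-extraction scheme of Bahouri--Gerard/Keraani, modified to track both the $\dot H^1$ scaling behavior and the $L^2$ mass.

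The central analytic tool is an inverse Strichartz inequality: if $\{f_n\}$ is radial and bounded in $H^1$ with $\limsup_n \|e^{it\Delta}f_n\|_{L^{10}_{t,x}}\geq\eta>0$, then after passing to a subsequence there exist parameters $(\lambda_n,t_n)\in[1,\infty)\times\R$ and a nontrivial weak limit $g_n(x):=\lambda_n^{-1/2}[e^{-it_n\Delta}f_n](\lambda_n^{-1}x)\rightharpoonup\phi$ in $\dot H^1$ with $\|\phi\|_{\dot H^1}\gtrsim \eta^{C}$. I would establish this via a refined Strichartz/Sobolev estimate of the form $\|e^{it\Delta}f\|_{L^{10}_{t,x}}\lesssim \|f\|_{\dot H^1}^\theta\,\sup_{N,t,x}\{N^{-1/2}|P_N e^{it\Delta}f(t,x)|\}^{1-\theta}$, so that non-vanishing of the left-hand side identifies a frequency $\lambda_n^{-1}$ and a space-time point at which $e^{it\Delta}f_n$ concentrates; the radial assumption lets us take the spatial center at $0$, eliminating translation parameters.

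I would then iterate: set $R_n^0:=f_n$, and at each stage if the $L^{10}_{t,x}$-norm of $e^{it\Delta}R_n^{J-1}$ exceeds a threshold $\eta_J$, extract a new bubble $(\phi^J,\lambda_n^J,t_n^J)$ and put $R_n^J:=R_n^{J-1}-\phi_n^J$. Asymptotic orthogonality \eqref{Pow} is argued by contradiction: if two parameter tuples $(\lambda_n^j,t_n^j)$ and $(\lambda_n^k,t_n^k)$ remained comparable, then the weak convergence \eqref{Wcp} of $R_n^{j}$ at the $j$-th rescaling together with the construction of $\phi^k$ would force $\phi^k=0$. The $\dot H^1$-decoupling is immediate from weak convergence; the $L^p$-decoupling for $p\in\{2,4,6\}$ in \eqref{PE11} follows from change of variables, orthogonality of parameters, and standard Rellich/compactness at each fixed scale. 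Since each bubble removes a definite quantum of $\dot H^1$-norm, the sum $\sum_J\|\phi^J\|_{\dot H^1}^2$ is finite and bounded by $\limsup_n\|f_n\|_{\dot H^1}^2$, so the refined Strichartz inequality applied to $R_n^J$ yields the vanishing statement \eqref{Sr}.

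The dichotomy of parameters and the frequency truncation are resolved as follows. Passing to subsequences, each $\lambda_n^j$ is either bounded (normalize to $\lambda_n^j\equiv 1$, and the uniformly bounded $L^2$-masses of the rescaled functions pass weakly to $\phi^j$, giving $\phi^j\in L^2$) or satisfies $\lambda_n^j\to\infty$ (in which case $L^2$-mass rescales away and only $\dot H^1$-information survives); similarly $t_n^j\equiv 0$ or $t_n^j\to\pm\infty$. In the regime $\lambda_n^j\to\infty$ I would insert the frequency cutoff $P_{\geq(\lambda_n^j)^{-\theta}}$ with $0<\theta<1$, as in \eqref{fucti}: this guarantees via Bernstein that the rescaled $\phi_n^j$ lies in $H^1$ uniformly in $n$ while keeping $\phi^j$ recoverable as a weak $\dot H^1$-limit, and it is compatible with the $L^p$-decoupling. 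The main technical obstacle is the refined Strichartz/Sobolev inequality together with careful matching of the cutoff exponent $\theta$ so that both $H^1$-boundedness of profiles and full decoupling in $L^2$, $L^4$, $L^6$ hold simultaneously; once that core step is in place everything else reduces to bookkeeping and extraction of subsequences.
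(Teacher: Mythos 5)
The paper does not prove this theorem; it imports it directly from \cite[Theorem 7.5]{KillipOhPoVi2017}, remarking only that radiality kills the translation parameters. Your sketch reproduces the standard Bahouri--G\'erard/Keraani bubble-extraction used in that reference: an inverse Strichartz/refined Sobolev estimate to locate a concentration scale and temporal center, iterative removal of bubbles, orthogonality by contradiction via the weak-limit construction, and the low-frequency truncation $P_{\geq(\lambda_n^j)^{-\theta}}$ in the $\lambda_n^j\to\infty$ regime so that the rescaled profile lies in $H^1$ (via Bernstein, $\|\phi_n^j\|_{L^2}\lesssim (\lambda_n^j)^{\theta-1}\|\phi^j\|_{\dot H^1}\to 0$) while still carrying a definite $\dot H^1$-quantum. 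This is the same approach. One small slip: when $\lambda_n^j\to\infty$, $\phi_n^j$ concentrates at spatial scale $(\lambda_n^j)^{-1}$ and hence at frequency $\sim\lambda_n^j$, not $(\lambda_n^j)^{-1}$ as you wrote; equivalently the cutoff removes frequencies of $\phi^j$ below $(\lambda_n^j)^{-\theta}$, which after the rescaling sit above $(\lambda_n^j)^{1-\theta}\to\infty$. Your treatment of the $L^p$-decoupling for $p\in\{2,4,6\}$ is the most hand-waved piece --- the separation of scales forces $L^p$-almost-orthogonality only after exploiting the frequency cutoff and the parameter orthogonality \eqref{Pow}, and this requires a nontrivial computation --- but the intended route is correct.
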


As an immediate consequence of the Theorem~\ref{Profi} we have the following result.
\begin{lemma}\label{Lde}
Under the hypotheses of Theorem \ref{Profi}, we have
\begin{equation}\label{PE}
E(f_{n})=\sum^{J}_{j=1}E(\phi^{j}_{n} )+E(R_n^J)+o_{n}(1),
\end{equation}
for any $J\in \N$.
\end{lemma}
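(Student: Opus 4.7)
The plan is to expand the energy into its three constituent pieces and appeal to the decoupling identities already given in Theorem~\ref{Profi}. Concretely, for any $g\in H^1(\R^3)$ we have
\[
E(g)=\tfrac{1}{2}\|\nabla g\|_{L^2}^2-\tfrac{1}{6}\|g\|_{L^6}^6+\tfrac{1}{4}\|g\|_{L^4}^4,
\]
so it suffices to prove the decoupling identity separately for each of the three norms $\|\nabla f_n\|_{L^2}^2$, $\|f_n\|_{L^6}^6$, and $\|f_n\|_{L^4}^4$, and then take the appropriate linear combination.

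First I would apply the $\dot H^1$-decoupling identity
\[
\|f_n\|_{\dot H^1}^2=\sum_{j=1}^J\|\phi_n^j\|_{\dot H^1}^2+\|R_n^J\|_{\dot H^1}^2+o_n(1),
\]
which is the last bullet of Theorem~\ref{Profi}. Next, I would apply \eqref{PE11} with $p=6$ and with $p=4$ to obtain
\[
\|f_n\|_{L^6}^6=\sum_{j=1}^J\|\phi_n^j\|_{L^6}^6+\|R_n^J\|_{L^6}^6+o_n(1)
\]
and the analogous identity at the $L^4$ level. Forming the combination $\tfrac{1}{2}(\cdot)-\tfrac{1}{6}(\cdot)+\tfrac{1}{4}(\cdot)$ of these three identities regroups the terms on the right-hand side into exactly $\sum_{j=1}^{J}E(\phi_n^j)+E(R_n^J)+o_n(1)$, since each of $\phi_n^j$ and $R_n^J$ is a fixed function (so $E$ splits on it in the same way) and the error terms from the three decouplings are independent $o_n(1)$ quantities which may be absorbed into a single $o_n(1)$.

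There is no real obstacle to the argument: the lemma is a direct consequence of the decoupling statements already recorded in Theorem~\ref{Profi}. The only conceptual point worth flagging is that the $L^4$-decoupling in \eqref{PE11} is the key ingredient handling the subcritical defocusing cubic term; in particular, it encodes the fact that profiles at concentrating scales $\lambda_n^j\to\infty$ have vanishing $L^4$ mass (by a short change of variables together with the Bernstein inequality of Lemma~\ref{BIN}), so for such profiles $E(\phi_n^j)=E^c(\phi_n^j)+o_n(1)$. This will be useful in the later concentration-compactness analysis, but it is not needed to write down the decoupling identity itself.
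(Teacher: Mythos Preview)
Your proposal is correct and matches the paper's approach exactly: the paper does not give a detailed proof but simply states the lemma ``as an immediate consequence of the Theorem~\ref{Profi},'' which is precisely what you have spelled out by combining the $\dot H^1$-decoupling with \eqref{PE11} for $p=4,6$. Your additional remark about the $L^4$ norm of concentrating profiles vanishing is a useful observation for later but, as you note, is not needed for the lemma itself.
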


\subsection{Approximation by the quintic NLS}  An important step in the proof of Theorem~\ref{Th2} is the approximation of the cubic-quintic NLS by the pure quintic NLS in the small-scale limit.  A key ingredient in this approximation argument is the following scattering result for the quintic NLS appearing in \cite{KenigMerle2006}:

\begin{theorem}\label{WGPQN}
Let $u_{0}\in \dot{H}^{1}(\R^{3})$ be radial. Assume 
\[
E^{c}(u_{0})<E^{c}(W)\qtq{and}\|\nabla u_{0}\|_{L^{2}}<\|\nabla W\|_{L^{2}}.
\]
Then there exists a unique global solution $u\in C(\R, \dot{H}_{x}^{1}(\R^{3}))$ of  the energy-critical NLS \eqref{ECgNLS} with $u|_{t=0}=u_{0}$, which satisfies
\[
\| u  \|_{L^{10}_{t,x}(\R\times\R^{3})} \lesssim 1.
\]
In addition, the solution scatters in both time directions in $\dot{H}^{1}(\R^{3})$.
\end{theorem}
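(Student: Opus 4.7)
The plan is to follow the Kenig--Merle concentration-compactness/rigidity road map for focusing energy-critical equations. First, I would set up the small-data theory for \eqref{ECgNLS} via Strichartz estimates on the $\dot{S}^1$-norm: for sufficiently small $\|\nabla u_0\|_{L^2}$, the solution is global with $\|u\|_{L^{10}_{t,x}} \lesssim \|\nabla u_0\|_{L^2}$, hence scatters in $\dot H^1$. Combined with the sharp Sobolev inequality \eqref{GI}, the variational characterization from \eqref{IneW}, and the standard fact that $\|\nabla u(t)\|_{L^2} < \|\nabla W\|_{L^2}$ is preserved along the flow (proven as in Lemma~\ref{GlobalS} but using only $E^c$), one sees that under the hypotheses the kinetic energy stays trapped below $\|\nabla W\|_{L^2}$. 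Define the critical energy
\[
E_c := \sup\bigl\{ E \leq E^c(W) : \text{every radial $u_0$ with $E^c(u_0) < E$ and $\|\nabla u_0\|_{L^2} < \|\nabla W\|_{L^2}$ scatters}\bigr\}.
\]
The goal is to prove $E_c = E^c(W)$ by contradiction; assume $E_c < E^c(W)$.

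The concentration-compactness step then produces a \emph{critical element}. Choose a sequence of radial initial data $u_{n,0}$ with $E^c(u_{n,0}) \to E_c$, $\|\nabla u_{n,0}\|_{L^2} < \|\nabla W\|_{L^2}$, and whose solutions satisfy $\|u_n\|_{L^{10}_{t,x}} \to \infty$. Apply the linear profile decomposition (Theorem~\ref{Profi}), with each profile $\phi^j_n$ carried by scale $\lambda^j_n$ and time $t^j_n$; the radial assumption removes spatial translations. Using the energy decoupling \eqref{PE} together with the nonnegativity of $E^c(\phi^j_n)$ at sub-threshold (which follows from \eqref{IneW}), and using the sub-critical scattering hypothesis together with a stability result for \eqref{ECgNLS} analogous to Lemma~\ref{stabi}, I would argue that all but exactly one profile must produce a nonlinear flow with bounded $L^{10}_{t,x}$ norm. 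Thus a single profile accounts for the blow-up of the Strichartz norm and, after passing to a subsequence, gives rise to a global-forward (or backward) solution $u_c$ to \eqref{ECgNLS} with $E^c(u_c) = E_c$, $\|\nabla u_c\|_{L^2} < \|\nabla W\|_{L^2}$, and $\|u_c\|_{L^{10}_{t,x}(I_+ \times \R^3)} = \infty$, whose orbit $\{ \lambda(t)^{-1/2} u_c(t, \lambda(t)^{-1} \cdot) \}$ is precompact in $\dot H^1$ for some scale function $\lambda(t) > 0$.

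The rigidity step then forces $u_c \equiv 0$, contradicting $\|u_c\|_{L^{10}_{t,x}} = \infty$. The sub-threshold bound $\|\nabla u_c\|_{L^2} < \|\nabla W\|_{L^2}$ combined with \eqref{GI} and \eqref{C_GN}, \eqref{PoQ} yields a uniform coercivity gap
\[
\|\nabla u_c(t)\|_{L^2}^2 - \|u_c(t)\|_{L^6}^6 \geq \delta_0 > 0 \qquad \text{for all } t,
\]
so that $G[u_c(t)] \geq \delta_0$ uniformly. I would now run the localized virial identity from Lemma~\ref{VirialIden} (specialized to \eqref{ECgNLS}, i.e.\ dropping the cubic term) on a ball of radius $R/\lambda(t)$ that is large enough, using $\dot H^1$-compactness of the rescaled orbit to absorb the error terms $F_R^c[u_c] - F_\infty^c[u_c]$; integration in time gives $\int_0^T G[u_c(t)]\,dt \gtrsim \delta_0 T$, while $I_R[u_c]$ is bounded by $R^2 \cdot \sup_t \|u_c(t)\|_{L^2(|x|\lesssim R/\lambda(t))} \|\nabla u_c(t)\|_{L^2}$, which can be made $o(T)$ by choosing $R$ appropriately, taking $T \to \infty$, and using case analysis on $\lambda(t)$ (with separate treatment of the cases $\lambda(t)$ bounded, $\lambda(t) \to \infty$, and $\lambda(t) \to 0$, each ruled out by compactness and finite-time-blowup-type mass arguments as in Section~\ref{S:FTB} of the present paper).

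The main obstacle is the delicate profile-decomposition reduction in the concentration-compactness step: one must carefully combine the nonlinear profiles with a stability result for \eqref{ECgNLS}, and show that the approximate sum of flows matches the actual nonlinear flow of $u_n$ up to small error in $L^{10}_{t,x}$. The case $\lambda^j_n \to \infty$ (small-scale profiles) requires a small-data argument at the $\dot H^1$ level, while the case $\lambda^j_n \equiv 1$ requires preservation of the sub-threshold gradient condition under nonlinear flow. The rigidity step is conceptually cleaner, but separating the three scale scenarios for $\lambda(t)$ and excluding the shrinking-scale case without the benefit of conserved $L^2$ mass (since $W \notin L^2$) is the most subtle remaining issue.
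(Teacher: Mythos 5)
The paper does not prove this statement; it cites it directly to Kenig and Merle \cite{KenigMerle2006} (the sentence preceding the theorem explicitly attributes it there), and uses it only as an input to Lemma~\ref{P:embedding}. Your proposal is essentially a faithful outline of the Kenig--Merle concentration-compactness/rigidity argument that proves it in that reference, so in that sense it is the ``right'' proof even though the present paper contains no proof to compare against. A few small remarks. Your definition of $E_c$ and the induction-on-energy reduction to a critical element with precompact rescaled orbit are exactly the Kenig--Merle steps, and the use of the profile decomposition with translations suppressed by radiality is correct. In the rigidity step, the bound you write for $I_R[u_c]$, namely $R^2\sup_t\|u_c(t)\|_{L^2(|x|\lesssim R/\lambda(t))}\|\nabla u_c\|_{L^2}$, has an extra power of $R$: the weight obeys $|\nabla w_R|\lesssim R$ on its support, and combining with H\"older and Sobolev one gets $|I_R[u]|\lesssim R^2\|\nabla u\|_{L^2}^2$, with no $L^2$ mass needed — this is how Kenig--Merle avoid the fact that $u_c$ need not lie in $L^2$. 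You correctly identify the main remaining subtlety as establishing a lower bound $\lambda(t)\gtrsim 1$ in the global case (and using mass concentration in the finite-time case). That is indeed where the real work lies in \cite{KenigMerle2006}; everything else in your outline is standard. Since the paper's authors simply invoke the theorem, reproducing the full Kenig--Merle rigidity argument here would be orthogonal to the paper's contribution, but your road map is sound.
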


Building on this result, we will establish the following approximation result, which follows along the same lines as \cite[Proposition 8.3]{KillipOhPoVi2017}.

\begin{lemma}[Embedding nonlinear profiles]\label{P:embedding}
Fix $\theta\in(0,1)$.  Let $\{t_n\}_{n\in \N}$ satisfy $t_n\equiv 0$ or $t_n\to\pm\infty$, and let $\{\lambda_n\}_{n\in \N}\subset [1, \infty)$ satisfy 
$\lambda_{n} \to +\infty$ as $n\to+\infty$.
Suppose $\phi\in \dot{H}^{1}(\R^{3})$ is radial and obeys 
\begin{align}\label{PriC}
&E^{c}(\phi)<E^{c}(W) \qtq{and} \|\nabla \phi\|^{2}_{L^{2}}< \|\nabla W\|^{2}_{L^{2}}\qtq{if} t_{n}\equiv0,\\
&\tfrac{1}{2}\|\nabla \phi\|^{2}_{{L}^{2}}<E^{c}(W) \qtq{if} t_{n}\to \pm \infty.\label{seC}
\end{align}
For $n$ sufficiently large, there exists a global solution $v_n$ to \eqref{NLS} with 
\[
v_n(0,x)=\phi_n(x):=(\lambda_{n})^{\frac{1}{2}}[e^{it_{n}\Delta} P_{\geq (\lambda_{n})^{-\theta}}\phi]( \lambda_{n}x)
\]
that satisfies
\begin{align}\label{ImporB}
\|\nabla v_n\|_{S^{0}(\R\times \R^{3})}\lesssim 1.
\end{align}
Moreover, for every $\epsilon>0$ there exist $N=N(\epsilon)$ and $\chi_\eps,\psi_\eps\in C_c^\infty(\R\times\R^3)$ such that
\begin{align}\label{Small11}
\| v_{n}-\lambda^{1/2}_{n}\chi_{\epsilon}(\lambda^{2}_{n}{t}+t_{n},\lambda_{n}x)  \|_{L^{10}_{t,x}}&<\epsilon,\\
	\label{Small22}
\| \nabla v_{n}-\lambda^{3/2}_{n}\psi_{\epsilon}(\lambda^{2}_{n}{t}+t_{n}, \lambda_{n}x)  \|_{L^{\frac{10}{3}}_{t,x}}&<\epsilon,
\end{align}
for all $n\geq N$.
\end{lemma}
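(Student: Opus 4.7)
The plan is to approximate the cubic-quintic NLS solution by a suitably rescaled solution of the pure quintic NLS: since $\lambda_n\to\infty$, the cubic nonlinearity $|u|^2u$ is lower order under the rescaling and should contribute a vanishing error. Lemma~\ref{stabi} is then invoked to upgrade the approximate solution to the true cubic-quintic solution $v_n$.

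First I construct a global quintic NLS solution $w_n$ serving as the profile, using \emph{projected} initial data so that $w_n$ carries finite and controlled mass. In the case $t_n\equiv 0$, for $n$ large we have $E^c(P_{\geq \lambda_n^{-\theta}}\phi)<E^c(W)$ and $\|\nabla P_{\geq \lambda_n^{-\theta}}\phi\|_{L^2}<\|\nabla W\|_{L^2}$ by \eqref{PriC} and continuity, so Theorem~\ref{WGPQN} gives a global quintic NLS solution $w_n$ with $w_n(0)=P_{\geq \lambda_n^{-\theta}}\phi$ and bounded Strichartz norms. In the case $t_n\to\pm\infty$, note that \eqref{seC} combined with the Pohozaev identity yields $\|\nabla\phi\|_{L^2}<\|\nabla W\|_{L^2}$; since $\|e^{it\Delta}\phi\|_{L^6}\to 0$ by dispersive decay, for $|T|$ large the data $e^{iT\Delta}P_{\geq \lambda_n^{-\theta}}\phi$ satisfies the hypotheses of Theorem~\ref{WGPQN}, and a wave-operator construction (solving backwards from $T$ and letting $T\to\pm\infty$) yields $w_n$ scattering to $e^{it\Delta}P_{\geq \lambda_n^{-\theta}}\phi$. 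Crucially, Bernstein's inequality gives
\[
\|w_n\|_{L^\infty_t L^2_x}\lesssim \|P_{\geq\lambda_n^{-\theta}}\phi\|_{L^2}\lesssim \lambda_n^{\theta}\|\nabla\phi\|_{L^2}.
\]

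Next, set
\[
\tilde v_n(t,x):=\lambda_n^{1/2}\,w_n\bigl(\lambda_n^2 t+t_n,\,\lambda_n x\bigr),
\]
which exactly solves the pure quintic NLS and satisfies $\tilde v_n(0,x)=\phi_n(x)$ when $t_n\equiv 0$ and $\|\tilde v_n(0)-\phi_n\|_{\dot H^1}\to 0$ when $t_n\to\pm\infty$ (by the scattering of $w_n$). Regarded as an approximate solution to \eqref{NLS}, its error is simply $e=-|\tilde v_n|^2\tilde v_n$. Computing a dual Strichartz norm after the rescaling $(t,x)\mapsto(\lambda_n^2 t,\lambda_n x)$ produces a factor of $\lambda_n^{-1}$, while the remaining norm of $\nabla(|w_n|^2 w_n)$ is controlled by Hölder/Strichartz estimates involving $\|w_n\|_{L^\infty_t L^2_x}$, $\|w_n\|_{L^{10}_{t,x}}$, and a Strichartz norm of $\nabla w_n$. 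Combining with the mass bound above,
\[
\|\nabla e\|_{L^{10/7}_{t,x}}\lesssim \lambda_n^{-1+\theta}=o_n(1),
\]
since $\theta<1$. Lemma~\ref{stabi} then promotes $\tilde v_n$ to the true solution $v_n$ with $\|\nabla(v_n-\tilde v_n)\|_{S^0}\to 0$, yielding \eqref{ImporB}.

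Finally, the approximations \eqref{Small11}--\eqref{Small22} follow from the density of $C_c^\infty(\R\times\R^3)$ in $L^{10}_{t,x}$ and $L^{10/3}_{t,x}$, which control $w_n$ and $\nabla w_n$ respectively; one approximates $w_n$ and $\nabla w_n$ in these norms by $\chi_\eps,\psi_\eps\in C_c^\infty$, combines with \eqref{SCN55}, and undoes the rescaling. The main technical obstacle is the careful balancing of two competing powers of $\lambda_n$ in the error estimate: the rescaling contributes a gain of $\lambda_n^{-1}$, while the mass of the frequency-projected profile costs $\lambda_n^\theta$. The argument closes precisely because $\theta<1$ is built into the profile decomposition (Theorem~\ref{Profi}), reflecting the sub-criticality of the cubic term at small scales.
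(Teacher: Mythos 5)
Your construction and error estimate follow exactly the paper's strategy: rescaled quintic-NLS profiles $w_n$ with frequency-projected data, the Bernstein/persistence-of-regularity bound $\|w_n\|_{S^0}\lesssim\lambda_n^\theta$, the cubic nonlinearity treated as an error with $\|\nabla e_n\|_{L^{10/7}_{t,x}}\lesssim\lambda_n^{-(1-\theta)}$, and Lemma~\ref{stabi} to upgrade to the genuine solution $v_n$. Your observations that \eqref{PriC} transfers to the projected data by continuity and that \eqref{seC} plus Pohozaev gives $\|\nabla\phi\|_{L^2}<\|\nabla W\|_{L^2}$ are both correct. Up to this point the argument is sound and matches the paper.

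There is, however, a genuine gap in your final step. You propose to obtain $\chi_\eps,\psi_\eps$ by approximating ``$w_n$ and $\nabla w_n$'' by $C_c^\infty$ functions and then combining with \eqref{SCN55}. But $w_n$ depends on $n$, so this procedure as written produces a family $\chi_{\eps,n}$ rather than the single $n$-independent pair $\chi_\eps,\psi_\eps$ that the lemma requires to hold for all $n\geq N(\eps)$. To close this, one must introduce the limiting quintic-NLS solution $w$ (with unprojected data $\phi$, or scattering to $e^{it\Delta}\phi$), invoke the long-time perturbation theory for \eqref{ECgNLS} together with $\|P_{\geq\lambda_n^{-\theta}}\phi-\phi\|_{\dot H^1}\to 0$ to conclude $\|\nabla(w_n-w)\|_{S^0(\R\times\R^3)}\to 0$, and then approximate the \emph{fixed} function $w$ (and $\nabla w$) by $\chi_\eps,\psi_\eps\in C_c^\infty$. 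Combining that density approximation with the convergence $w_n\to w$ and the stability bound \eqref{SCN55}, and undoing the rescaling, then yields \eqref{Small11}--\eqref{Small22} uniformly in $n\geq N$. Without the intermediate convergence $w_n\to w$, the uniformity in $n$ does not follow from density alone.
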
 

\begin{proof}[Sketch of the proof] If $t_{n}\equiv0$, we let $w_{n}$ and $w$ be the solutions to \eqref{ECgNLS} with initial data $w_{n}(0)=P_{\geq (\lambda_{n})^{-\theta}}\phi$ and $w(0)=\phi$, respectively. If $t_{n} \to \pm \infty$, we let $w_{n}$ and $w$ be the solutions to \eqref{ECgNLS} that scatter in $\dot{H}^{1}$ to $e^{it \Delta}P_{\geq (\lambda_{n})^{-\theta}}\phi$ and $e^{it \Delta}\phi$, respectively, as $t_{n} \to \pm \infty$. Note that $P_{\geq (\lambda_{n})^{-\theta}}\phi$ is radial and
\begin{align}\label{Ap11}
\| P_{\geq (\lambda_{n})^{-\theta}}\phi-\phi \|_{\dot{H}^{1}_{x}}\rightarrow 0 \quad \text{as $n\rightarrow \infty$}.
\end{align}
By conditions \eqref{PriC} and \eqref{seC} and Theorem~\ref{WGPQN}, we deduce that in either case, for $n$ sufficiently large, $w_{n}$ and $w$ are global and obey
\begin{align}\label{BoStr}
\|\nabla w_{n}   \|_{S^{0}(\R\times \R^{3})}+\|\nabla w   \|_{S^{0}(\R\times \R^{3})} \lesssim 1.
\end{align}
On the other hand, Lemma~\ref{BIN} implies
\[
\| P_{\geq (\lambda_{n})^{-\theta}}\phi\|_{{L}^{2}}\lesssim\lambda^{\theta}_{n}\| \nabla \phi\|_{{L}^{2}},
\]
thus, by persistence of regularity (see e.g. Remark~\ref{Rpr}), we deduce 
\begin{align}\label{wpersi11}
	\|w_{n}\|_{S^{0}(\R\times \R^{3})}\leq C(\|\phi\|_{\dot{H}^{1}_{x}})\lambda^{\theta}_{n}.
\end{align}

We now define
\[
\tilde{u}_{n}(t,x)=\lambda^{\frac{1}{2}}_{n}w_{n}(\lambda^{2}_{n}t+t_{n}, \lambda_{n}x),
\]
which also solve  \eqref{ECgNLS}.  We will show that the $\tilde{u}_{n}$ are approximate solutions to \eqref{NLS} that asymptotically match $\phi_{n}$ at $t=0$. Indeed, first notice that
\[
\|\nabla \tilde{u}_{n}   \|_{S^{0}(\R\times \R^{3})}=\|\nabla w_{n}   \|_{S^{0}(\R\times \R^{3})}
\lesssim 1
\]
and 
\[
\| \tilde{u}_{n}(0)-\phi_{n} \|_{\dot{H}^{1}_{x}}
=\| w_{n}(t_{n})-e^{it_{n}\Delta}P_{\geq (\lambda_{n})^{-\theta}}\phi_{n} \|_{\dot{H}^{1}_{x}}
\rightarrow 0 \quad \text{as $n\rightarrow \infty$.}
\]
Moreover, we deduce from \eqref{wpersi11} (and the fact that $\lambda_n\to\infty$) that
\[
\|\tilde{u}_{n}\|_{S^{0}(\R\times \R^{3})}
=\lambda^{-1}_{n}\|w_{n}\|_{S^{0}(\R\times \R^{3})}
\leq C(\|\phi\|_{\dot{H}^{1}_{x}})\lambda^{-(1-\theta)}_{n}\to 0
\]
as $n\to \infty$.

We now define the errors $e_{n}:=-|\tilde{u}_{n}|^{2}\tilde{u}_{n}$.  The estimates given above imply that 
\begin{equation}\label{Inter1}
\begin{split}
\|\nabla   e_{n} \|_{L^{\frac{10}{7}}_{t, x}}\lesssim 
\|  \nabla \tilde{u}_{n}\|_{L^{\frac{10}{3}}_{t,x}}\| \tilde{u}_{n}  \|_{L^{10}_{t,x}}\| \tilde{u}_{n}\|_{L^{\frac{10}{3}}_{t,x}}
\lesssim
\lambda^{-(1-\theta)}_{n}\rightarrow0
 \end{split}
\end{equation}
as $n\rightarrow\infty$, with all space-time norms over $\R\times \R^{3}$. Thus, we can apply the stabilty result (Lemma \ref{stabi}) to deduce the existence of  global solutions $v_{n}$ of \eqref{NLS} with  data  $u_{n}(0)=\phi_{n}$ such that \eqref{ImporB} holds.

Finally, the long-time perturbation theory for \eqref{ECgNLS} (see e.g. \cite[Theorem 2.14]{KenigMerle2006}) implies that
\[
\lim_{n\to \infty}\|\nabla (w_{n}-w)\|_{S^{0}(\R\times \R^{3})}=0.
\]
One then obtains \eqref{Small11} and \eqref{Small22} via the arguments in \cite[Proposition~8.3]{KillipOhPoVi2017}. \end{proof}

\section{Modulation analysis}\label{S:Modulation}

In this section we assume that $u:I\times\R^3\to\C$ is a radial solution to \eqref{NLS} with
\begin{equation}\label{22condition}
E(u_{0})=E^{c}(W).
\end{equation}
We write $\delta(t):=\delta(u(t))$, where
\[
\delta(u):=\bigl|\|\nabla W\|_{{L}^{2}}^{2}-\|\nabla u\|_{{L}^{2}}^{2}\bigr|.
\]

For $\delta_{0}>0$ small (to be determined below), we define
\[
I_{0}=\{t\in I:\delta(u(t))<\delta_0\}.
\]

\begin{proposition}\label{Modilation11}
There exist $\delta_{0}>0$ sufficiently small and functions $\theta: I_{0}\to \R$ and $\mu: I_{0}\to \R$ so that writing
\begin{equation}\label{DecomU}
u(t,x)=e^{i \theta(t)}[g(t)+\mu(t)^{\frac{1}{2}}W(\mu(t)x)]
\end{equation}
for $t\in I_0$, we have 
\begin{align}
&\|u(t)\|^{2}_{L_{x}^{4}} +\tfrac{1}{1+\mu(t)}
\lesssim\delta(t)\sim \|g(t)\|_{\dot{H}^{1}},\label{Estimatemodu}\\
&\left|\tfrac{\mu^{\prime}(t)}{\mu(t)}\right|\lesssim \mu^{2}(t)\delta(t).\label{EstimLaD}
\end{align}
\end{proposition}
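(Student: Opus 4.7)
The plan is to follow the standard modulation framework for threshold NLS problems pioneered in \cite{DuyckaMerle2009}, adapted to the $H^1$-setting of the cubic-quintic equation. I would organise the argument in four steps, writing $W_\mu(x):=\mu^{1/2}W(\mu x)$ throughout: (i) extract $\|u\|_{L^4}^2\lesssim\delta$ purely from the energy identity; (ii) combine closeness to the orbit $\{e^{i\theta}W_\mu\}$ with the implicit function theorem to produce $(\theta(t),\mu(t))$; (iii) use expansions of the $\dot H^1$- and $L^4$-norms together with mass conservation to obtain the remaining bounds in \eqref{Estimatemodu}; (iv) differentiate the orthogonality condition in $t$ to obtain \eqref{EstimLaD}.

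For (i), the identity $E(u(t))=E^c(W)$ rearranges to $\|u\|_{L^6}^6=3\|\nabla u\|_{L^2}^2+\tfrac32\|u\|_{L^4}^4-2\|\nabla W\|_{L^2}^2$, and combining with the sharp Sobolev estimate $\|u\|_{L^6}^6\leq\|\nabla u\|_{L^2}^6/\|\nabla W\|_{L^2}^4$ (from \eqref{GI}--\eqref{PoQ}) yields $\|u\|_{L^4}^4\lesssim (a-1)^2(a+2)\|\nabla W\|_{L^2}^2$, where $a:=\|\nabla u\|_{L^2}^2/\|\nabla W\|_{L^2}^2$; hence $\|u\|_{L^4}^4\lesssim\delta^2$ as desired. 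For (ii), with this $L^4$-bound in hand plus $\|\nabla u\|_{L^2}\to\|\nabla W\|_{L^2}$, a compactness argument using the linear profile decomposition (Theorem~\ref{Profi}) and the variational characterisation of $W$ shows $\inf_{\theta,\mu}\|u(t)-e^{i\theta}W_\mu\|_{\dot H^1}\to 0$ as $\delta(t)\to 0$. On this near-orbit regime, I would apply an IFT to the map whose zeros encode orthogonality of $g(t):=e^{-i\theta(t)}u(t)-W_{\mu(t)}$ to the tangent directions of the orbit, e.g.\ $\RE\langle g,\Lambda W_\mu\rangle_{\dot H^1}=\IM\langle g,W_\mu\rangle_{\dot H^1}=0$ with $\Lambda=\tfrac12+x\cdot\nabla$; non-degeneracy of the Jacobian at $W_\mu$ delivers $\theta(t),\mu(t)$ of the required regularity on $I_0$.

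For (iii), the equivalence $\delta\sim\|g\|_{\dot H^1}$ follows from the expansion $\|\nabla u\|_{L^2}^2-\|\nabla W\|_{L^2}^2=\|g\|_{\dot H^1}^2+2\RE\langle g,W_\mu^5\rangle_{L^2}$: Cauchy--Schwarz gives $\delta\lesssim\|g\|_{\dot H^1}$, while the matching lower bound uses that the energy constraint expanded to second order, combined with coercivity of the linearised operator $L_+=-\Delta-5W_\mu^4$ on the appropriate invariant subspace, forces $g$ to be (to leading order) aligned with the negative eigendirection spanned by $W_\mu$, so that $|\RE\langle g,W_\mu^5\rangle_{L^2}|$ is comparable to $\|g\|_{\dot H^1}\|\nabla W\|_{L^2}$. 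To obtain $1/(1+\mu)\lesssim\delta$ I would use mass conservation via a localised argument: a direct computation based on $W(y)\sim\sqrt{3}/|y|$ gives $\|W_\mu\|_{L^2(B_R)}^2\sim\mu^{-1}R$ for $R\gg\mu^{-1}$, while H\"older and Sobolev yield $\|g\|_{L^2(B_R)}\leq|B_R|^{1/3}\|g\|_{L^6}\lesssim R\delta$. Inserting these into $\|W_\mu\|_{L^2(B_R)}\leq\|u\|_{L^2(B_R)}+\|g\|_{L^2(B_R)}\leq\sqrt{2M(u_0)}+CR\delta$ and optimising at $R\sim 1/\delta$ then gives $\mu^{-1}\lesssim\delta$.

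For (iv), differentiating the scale-orthogonality condition in $t$, substituting the NLS equation for $\partial_t u$, and solving the resulting $2\times 2$ linear system for $(\theta'(t),\mu'(t))$ produces \eqref{EstimLaD}; the factor $\mu^2$ reflects the natural parabolic time-scaling that maps $W_\mu$-concentrated solutions back to $W$. I expect the main technical obstacle to be $1/(1+\mu)\lesssim\delta$: this is precisely where the finite-mass constraint enters (absent in the pure $\dot H^1$-setting of \cite{DuyckaMerle2009}), forcing $\mu(t)\to\infty$ as $\delta(t)\to 0$ and ultimately ruling out the heteroclinic orbits $W^\pm$ in the present $H^1$ framework.
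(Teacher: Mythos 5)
The proposal is correct and follows the same broad scheme as the paper (IFT to extract $(\theta,\mu)$, coercivity of the linearised operator for $\delta\sim\|g\|_{\dot H^1}$, a mass-conservation argument for $\mu$ large, and $\dot H^1$ inner products with tangent directions for $\mu'/\mu$), but it differs in two noteworthy places.

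First, your derivation of $\|u\|_{L^4}^2\lesssim\delta$ in step (i) is cleaner than the paper's. You obtain it directly from the energy identity $E(u)=E^c(W)$ and the sharp Sobolev inequality, yielding $\tfrac32\|u\|_{L^4}^4\leq(a-1)^2(a+2)\|\nabla W\|_{L^2}^2$ with $a=\|\nabla u\|_{L^2}^2/\|\nabla W\|_{L^2}^2$, which is modulation-free. The paper instead derives the same bound (Lemma~\ref{BoundI}) as a by-product of the coercivity argument. Your observation buys elegance, though not necessity, since the coercivity step still has to be carried out for $\delta\sim\|g\|_{\dot H^1}$ anyway, and that argument produces the $L^4$ bound for free.

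Second, your proof of $1/(1+\mu)\lesssim\delta$ uses mass conservation via $L^2$ estimates on balls $B_R$ and the asymptotic $\|W_\mu\|_{L^2(B_R)}^2\sim R/\mu$ for $R\gg 1/\mu$, then optimises at $R\sim1/\delta$. The paper (Lemma~\ref{BoundEx}) instead uses the pointwise lower bound on $W_\mu$ over $B(0,1)$ together with the $L^4$ control from Lemma~\ref{BoundI}. Both are essentially mass-versus-concentration arguments; yours is arguably sharper. (In fact, as written, the paper's pointwise lower bound $\mu^{1/2}W(\mu x)\geq(1+\mu^2)^{-1/2}$ on $B(0,1)$ is lossy by a factor $\mu^{1/2}$ for $\mu$ large, and as stated only gives $1/(1+\mu^2)\lesssim\delta$; using the sharper bound $\mu^{1/2}W(\mu x)\gtrsim\mu^{-1/2}$ recovers the claimed $1/\mu\lesssim\delta$. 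Your ball argument avoids this.) However, there is a small gap in your optimisation: the asymptotic $\|W_\mu\|_{L^2(B_R)}^2\sim R/\mu$ requires $R\gtrsim1/\mu$, and with $R=1/\delta$ this presupposes $\mu\gtrsim\delta$. This is not a priori known when you run the argument, so you should first establish that $\mu$ is bounded below (say $\mu\gtrsim1$) for $\delta_0$ small. The paper does this separately in Remark~\ref{Yinfinity} by observing that if $\mu_0(t_n)$ were bounded, then $\mu_0^{-1/2}e^{-i\theta_0}u(t_n,\mu_0^{-1}\cdot)$ would be bounded in $L^2$ and converge weakly to $W\notin L^2$; your own argument with $R\sim1/\mu$ in place of $R\sim1/\delta$ also yields $\mu\gtrsim1$, so the gap is easy to close but should be made explicit. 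One final minor point: your intuition for $\delta\sim\|g\|_{\dot H^1}$ ("$g$ aligned with the $W_\mu$ direction, so $|\RE\langle g,W_\mu^5\rangle|\sim\|g\|_{\dot H^1}\|\nabla W\|_{L^2}$") is correct, but to make it precise one needs the explicit decomposition $g=\alpha W_\mu+h$ with $h$ orthogonal, the vanishing of the cross term $\langle L_1 W,h\rangle$, and the coercivity of $\mathcal F$ on $H^\perp$, exactly as in Lemma~\ref{BoundI} of the paper.
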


We begin with the following lemma. 

\begin{lemma}\label{LemmaMod}  
For any $\epsilon>0$, there exists $\delta_{0}=\delta_{0}(\epsilon)>0$ small enough that if $\delta(u(t))<\delta_{0}$, then there exists $(\theta_{0}(t), \mu_{0}(t))\in \R\times [0, \infty)$ so that
\begin{equation}\label{CondiModu}
\| u(t,\cdot)-e^{i\theta_{0}(t)}\mu_{0}(t)^{\frac{1}{2}}W(\mu_{0}(t)\cdot)\|_{\dot{H}^{1}}<\epsilon.
\end{equation}
\end{lemma}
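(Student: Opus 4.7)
The plan is a contradiction/compactness argument combining the linear profile decomposition of Theorem \ref{Profi} with the variational characterization of $W$ as the extremizer of the sharp Sobolev inequality \eqref{GI}--\eqref{C_GN}. Suppose the conclusion fails: there exist $\epsilon_0>0$ and a sequence $t_n\in I$ with $\delta(u(t_n))\to 0$, yet
$$\inf_{\theta_\ast,\mu_\ast}\|u(t_n)-e^{i\theta_\ast}\mu_\ast^{1/2}W(\mu_\ast\cdot)\|_{\dot{H}^1}\geq \epsilon_0.$$
Write $u_n:=u(t_n)$; the sequence is bounded in $H^1$ by mass conservation and remains radial.

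First I would show that $\{u_n\}$ asymptotically saturates the sharp Sobolev inequality. From $E(u_n)=E^{c}(W)$, $\|\nabla u_n\|_{L^2}^2\to\|\nabla W\|_{L^2}^2$, and the Pohozaev identity $E^{c}(W)=\tfrac{1}{3}\|\nabla W\|_{L^2}^2$ from \eqref{PoQ}, a short computation gives
$$\tfrac{1}{6}\|u_n\|_{L^6}^6-\tfrac{1}{4}\|u_n\|_{L^4}^4\ \longrightarrow\ \tfrac{1}{6}\|W\|_{L^6}^6.$$
Combined with $\|u_n\|_{L^6}^6\leq C_{GN}^6\|\nabla u_n\|_{L^2}^6\to\|W\|_{L^6}^6$ from \eqref{GI}, this forces both $\|u_n\|_{L^4}\to 0$ and $\|u_n\|_{L^6}^6\to\|W\|_{L^6}^6$.

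Next I would apply Theorem \ref{Profi} to $\{u_n\}$ and set $\alpha_j:=\lim_n\|\nabla\phi_n^j\|_{L^2}^2/\|\nabla W\|_{L^2}^2$ and $\beta_j:=\lim_n\|\phi_n^j\|_{L^6}^6/\|W\|_{L^6}^6$, together with analogous quantities $\rho,\gamma$ for the remainder. Sharp Sobolev applied on each profile and on the remainder gives $\beta_j\leq \alpha_j^3$ and $\gamma\leq\rho^3$, while the $\dot H^1$ and $L^6$ decoupling \eqref{PE11} yield $\sum_j\alpha_j+\rho\leq 1$ and $\sum_j\beta_j+\gamma=1$. The elementary bound $\sum_j\alpha_j^3+\rho^3\leq \max(\alpha_j,\rho)\cdot(\sum_j\alpha_j+\rho)\leq \max(\alpha_j,\rho)\leq 1$ is then saturated, which forces exactly one of these quantities to equal $1$. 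The case $\rho=1$ is excluded since the vanishing \eqref{Sr} of $\|e^{it\Delta}R_n^J\|_{L^{10}_{t,x}}$ is incompatible with the remainder itself being a Sobolev extremizer sequence; so there is a unique surviving profile $\phi^{j_0}$ with $\alpha_{j_0}=\beta_{j_0}=1$ and a vanishing remainder in $\dot{H}^1$ and $L^6$. The alternative $t_n^{j_0}\to\pm\infty$ is ruled out by the dispersive decay of $e^{it\Delta}$ in $L^6$, which would give $\beta_{j_0}=0$; and the alternative $\lambda_n^{j_0}\equiv 1$ would compel $\phi^{j_0}\in L^2$ while simultaneously saturating Sobolev, hence be a rescaling of $W$, which does not belong to $L^2(\R^3)$. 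Therefore $\lambda_n^{j_0}\to\infty$ and $t_n^{j_0}\equiv 0$. Since the high-frequency projection in \eqref{fucti} converges to $\phi^{j_0}$ in $\dot{H}^1$, extremality is inherited, and the variational characterization recalled before \eqref{PoQ} produces $\phi^{j_0}=e^{i\vartheta}\tilde{\mu}^{1/2}W(\tilde{\mu}\cdot)$ (with no translation thanks to radiality). Setting $\mu_0(t_n):=\tilde{\mu}\lambda_n^{j_0}$ and $\theta_0(t_n):=\vartheta$ then yields $\|u_n-e^{i\theta_0(t_n)}\mu_0(t_n)^{1/2}W(\mu_0(t_n)\cdot)\|_{\dot{H}^1}\to 0$, contradicting the lower bound $\epsilon_0$.

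The main obstacle is the profile bookkeeping in the middle step: extracting a single surviving profile from the simultaneous $\dot H^1$ and $L^6$ decoupling inequalities, and carefully ruling out both the outgoing time case ($|t_n^{j_0}|\to\infty$) and the $L^2$-type bubble case ($\lambda_n^{j_0}\equiv 1$). The former uses the dispersive decay of $e^{it\Delta}$ in $L^6$, while the latter crucially uses $W\notin L^2(\R^3)$; once these two exclusions are in place, the variational rigidity of the Sobolev extremizer together with radial symmetry immediately furnishes the modulation parameters $\theta_0(t)$ and $\mu_0(t)$.
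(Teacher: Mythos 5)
Your proposal is correct, but it takes a different and substantially longer route than the paper. The paper's proof is a three-line affair: from $\delta(u(t_n))\to 0$ and $E(u_0)=E^c(W)$ one gets $\|\nabla u(t_n)\|_{L^2}^2\to\|\nabla W\|_{L^2}^2$ and $E^c(u(t_n))\leq E(u(t_n))=E^c(W)$, so that (after rescaling to unit $\dot H^1$ norm) $\{u(t_n)\}$ is a minimizing sequence for the variational problem $E^c(W)=\inf\{E^c(f):\|\nabla f\|_{L^2}=\|\nabla W\|_{L^2}\}$. The paper then simply invokes the classical compactness, modulo scaling and phase (no translation in the radial case), of Sobolev-minimizing sequences. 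What you have done is supply a self-contained proof of exactly that compactness statement, tailored to the present setting, via the linear profile decomposition of Theorem~\ref{Profi}. The route through the $\dot H^1$/$L^6$ decoupling plus sharp Sobolev to force a single surviving profile with $\alpha_{j_0}=\beta_{j_0}=1$, and then the exclusions of the $t_n^{j_0}\to\pm\infty$ and $\lambda_n^{j_0}\equiv 1$ cases, is a sound way to reach the same rigidity conclusion.

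A few remarks on where your write-up is slightly less than airtight. First, the exclusion of the remainder carrying all the $L^6$ mass (your ``$\rho=1$'' case) is asserted but really rests on a quantitative refinement of the Sobolev/Strichartz inequalities of the form $\|f\|_{L^6_x}\lesssim\|\nabla f\|_{L^2}^{1-\kappa}\|e^{it\Delta}f\|_{L^{10}_{t,x}}^{\kappa}$ for some $\kappa>0$; this is standard (it is what underlies the inverse Strichartz machinery behind Theorem~\ref{Profi}), but it is not recorded in the paper, so a complete proof would need to make it explicit. Second, the $\max$ computation needs the observation that the profile and remainder fractions do not exceed $1$ (so cubing shrinks them), and the bookkeeping over $J$ when $J^\ast=\infty$ needs a word, though the conclusion survives. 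Third, your step~1 computation ($\|u_n\|_{L^4}\to 0$ and $\|u_n\|_{L^6}^6\to\|W\|_{L^6}^6$) is correct and in fact stronger than strictly needed here, since the paper's shortcut only needs $E^c(u(t_n))\leq E^c(W)$ together with $\|\nabla u(t_n)\|^2\to\|\nabla W\|^2$. Overall: correct but reproving a cited black box; the paper's approach buys brevity by treating the Sobolev minimizing-sequence compactness as known, while yours is more self-contained and exhibits the mechanism.
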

\begin{proof} Suppose instead that there exist $\epsilon>0$ and $\left\{t_{n}\right\}$ so that
\begin{equation}\label{Contra11}
\delta(u(t_{n}))\to 0 \qtq{but}
\inf_{\theta\in \R}\inf_{\mu>0}\|u(t_{n},\cdot)-e^{i\theta}\mu^{\frac{1}{2}}W(\mu\cdot)\|_{\dot{H}^{1}}\geq \epsilon
\end{equation}
for all $n$. We now recall the variational characterization of the ground state:
\begin{align}\label{Vrat}
E^{c}(W)=\inf\left\{E^{c}(f) : \|\nabla f\|_{L^{2}} =\|\nabla W\|_{L^{2}}, \,\,\ f\in \dot{H}^{1}\right\}.
\end{align}
As $\|\nabla u(t_{n})\|^{2}_{L^{2}} \to \|\nabla W\|^{2}_{L^{2}}$ and  $E^{c}(u(t_{n}))\leq E^{c}(W)$  for all $n$ (cf. \eqref{22condition}), we see that $\left\{u(t_{n})\right\}$ is a minimizing sequence for \eqref{Vrat}. Thus there exists $(\theta_{n}, \mu_{n})\in \R^{2}$ such that
$e^{i\theta_{n}}\mu^{-\frac{1}{2}}_{n}u(t_{n},\mu^{-1}_{n}\cdot)\to W$ in $\dot{H}^{1}(\R^{3})$, contradicting \eqref{Contra11}.
\end{proof}


\begin{remark}\label{Yinfinity}
Let $R\geq1$. If $\delta_{0}$ is sufficiently small, we may assume
\begin{equation}\label{Nobound}
\mu_{0}(t)\geq R
\end{equation}
for $t$ in the lifespan of $u$.  Indeed, if \eqref{Nobound} fails, then there exists $\left\{t_{n}\right\}$ so that
\begin{equation}\label{Bounddelta}
\delta(t_{n})\to0\qtq{and}\mu_{0}(t_{n})\leq R\qtq{for all}n\in \N.
\end{equation}
From \eqref{CondiModu} we then deduce that
\begin{align}\label{Cv112}
f_{n}(x):=	e^{-i \theta_{0}(t_{n})}\tfrac{1}{\mu_{0}(t_{n})^{\frac{1}{2}}}u(t_{n}, \mu_{0}(t_{n})^{-1}x)\to W(x) \qtq{in} \dot{H}^{1}(\R^{3}).
\end{align}
By mass conservation and \eqref{Bounddelta}, we also get 
\[
\|f_{n}\|_{L^{2}}\leq \mu_{0}(t_{n})M(u_{0})^{\frac{1}{2}}\lesssim 1
\qtq{for all $n\in \N$.}
\]
But then by \eqref{CondiModu} we obtain  $f_{n}\rightharpoonup W$ in $L^{2}$ as $n\to \infty$, which contradicts $W\notin L^{2}$.
\end{remark}

Arguing as in \cite[Lemma 3.6]{DuyckaMerle2009} (using Lemma~\ref{LemmaMod} and the implicit theorem), we may also obtain the following lemma:

\begin{lemma}\label{ExistenceF}
If  $\delta_{0}>0$ is sufficiently small, then there exist $C^{1}$ functions $\theta: I_{0}\to \R$ and $\mu: I_{0}\to [0, \infty)$  so that
\begin{equation}\label{Taylor}
\| u(t)-e^{i\theta(t)}\mu(t)^{\frac{1}{2}}W(\mu(t)\cdot)\|_{\dot{H}^{1}} \ll 1.
\end{equation}
Writing 
\[
g(t):=g_{1}(t)+i g_{2}(t)=e^{-i\theta(t)}[u(t)-e^{i\theta(t)}\mu(t)^{\frac{1}{2}}W(\mu(t)\cdot)],
\]
and $W_{1}:=\tfrac{1}{2}W+x\cdot\nabla W\in \dot{H}^{1}$, we have
\begin{equation}\label{Ortogonality}
\<\nabla g_{2}(t), \nabla[\mu(t)^{\frac{1}{2}} W(\mu(t)\cdot)] \>
=\<\nabla g_{1}(t), \nabla[\mu(t)^{\frac{1}{2}} W_{1}(\mu(t)\cdot)]\>\equiv 0.
\end{equation}
\end{lemma}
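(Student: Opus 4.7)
The plan is to deduce the lemma from the implicit function theorem, using the approximate parameters supplied by Lemma~\ref{LemmaMod} as a starting point. I would introduce the map $\Phi:\R\times(0,\infty)\times\dot{H}^{1}\to\R^{2}$ whose components are precisely the two orthogonality functionals from \eqref{Ortogonality}:
\begin{align*}
\Phi_{1}(\theta,\mu,v)&=\<\nabla\IM(e^{-i\theta}v),\,\nabla[\mu^{\frac{1}{2}}W(\mu\cdot)]\>,\\
\Phi_{2}(\theta,\mu,v)&=\<\nabla\bigl(\RE(e^{-i\theta}v)-\mu^{\frac{1}{2}}W(\mu\cdot)\bigr),\,\nabla[\mu^{\frac{1}{2}}W_{1}(\mu\cdot)]\>.
\end{align*}
For each $t\in I_{0}$ the task is to solve $\Phi(\theta(t),\mu(t),u(t))=0$ with $(\theta(t),\mu(t))$ close to the approximate parameters $(\theta_{0}(t),\mu_{0}(t))$ furnished by Lemma~\ref{LemmaMod}.

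At any reference point $v_{0}=e^{i\theta_{0}}\mu_{0}^{\frac{1}{2}}W(\mu_{0}\cdot)$ on the ground-state orbit, one has $\Phi(\theta_{0},\mu_{0},v_{0})=0$, so the key computation is the Jacobian $D_{(\theta,\mu)}\Phi$ there. Using that $\IM(e^{-i\theta_{0}}v_{0})=0$ at the base point, the off-diagonal entries of this Jacobian vanish; direct calculation together with the $\dot{H}^{1}$ scale invariance of $f\mapsto\mu^{\frac{1}{2}}f(\mu\cdot)$ yields the diagonal entries $-\|\nabla W\|_{L^{2}}^{2}$ and $-\mu_{0}^{-1}\|\nabla W_{1}\|_{L^{2}}^{2}$. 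Since $W_{1}=\tfrac{d}{d\lambda}\big|_{\lambda=1}[\lambda^{\frac{1}{2}}W(\lambda\cdot)]$ is a nonzero element of $\dot{H}^{1}$ (it is the tangent vector to the scaling direction on the ground-state manifold), the Jacobian is invertible, and so the IFT provides a $C^{1}$ map $v\mapsto(\theta(v),\mu(v))$ on a $\dot{H}^{1}$-neighborhood of $v_{0}$ solving $\Phi(\theta(v),\mu(v),v)=0$.

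To patch these local IFT solutions into parameters defined uniformly on all of $I_{0}$, I would first rescale: setting $\tilde v(t,y)=\mu_{0}(t)^{-\frac{1}{2}}e^{-i\theta_{0}(t)}u(t,\mu_{0}(t)^{-1}y)$, Lemma~\ref{LemmaMod} guarantees $\|\tilde v(t)-W\|_{\dot{H}^{1}}<\epsilon$, so one may apply the IFT in a \emph{fixed} $\dot{H}^{1}$-neighborhood of $W$ to obtain $(\tilde\theta(t),\tilde\mu(t))$ close to $(0,1)$. Then $\theta(t):=\theta_{0}(t)+\tilde\theta(t)$ and $\mu(t):=\mu_{0}(t)\tilde\mu(t)$ verify the orthogonality conditions for $u(t)$ by the scaling and phase symmetries of $\Phi$. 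Continuity of $(\theta,\mu)$ comes from composing the $C^{1}$ inverse from the IFT with the continuous curve $t\mapsto u(t)\in H^{1}$; $C^{1}$ regularity in $t$ then follows by differentiating the identities $\Phi_{i}(\theta(t),\mu(t),u(t))=0$ in $t$, using the equation for $\partial_{t}u$, and solving the resulting $2\times 2$ linear system for $(\theta'(t),\mu'(t))$, whose coefficient matrix is precisely the nonsingular Jacobian computed above.

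The main technical obstacle is the scale dependence: since $\mu_{0}(t)$ may be arbitrarily large in view of Remark~\ref{Yinfinity}, the Jacobian entry $-\mu_{0}^{-1}\|\nabla W_{1}\|_{L^{2}}^{2}$ degenerates and a naive IFT would produce neighborhoods of size shrinking with $\mu_{0}$. The rescaling step above is precisely what allows a uniform application of the IFT over all of $I_{0}$. Beyond this, the argument follows the standard template of \cite[Lemma 3.6]{DuyckaMerle2009}.
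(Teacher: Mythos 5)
Your argument is the standard implicit-function-theorem proof of the modulation lemma, which is exactly what the paper invokes by citing \cite[Lemma 3.6]{DuyckaMerle2009} together with Lemma~\ref{LemmaMod}; you have simply filled in the details the paper leaves to the reference. The Jacobian computation (in particular the identity $\partial_\mu[\mu^{1/2}W(\mu\cdot)]=\mu^{-1/2}W_1(\mu\cdot)$, the vanishing of the off-diagonal entries at the orbit, and the rescaling to $\mu_0=1$ to avoid the degeneration of the $-\mu_0^{-1}\|\nabla W_1\|_{L^2}^2$ entry) is correct, so the proposal matches the paper's route.
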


We will need to expand the energy around the ground state.  We claim that 
\begin{equation}\label{Tay22}
E^{c}(u(t))-E^{c}(W)
=\F(\mu^{-\frac{1}{2}}(t)g(t, \mu^{-1}(t)x))+
o(\|g\|^{2}_{\dot{H}^{1}}),
\end{equation}
where $\F$ is the quadratic form on $\dot{H}^{1}$ defined by
\[
\F(h):=\tfrac{1}{2}\<(E^{c})^{\prime\prime}(W)[h], h\>=
\tfrac{1}{2}\int_{\R^{3}}|\nabla h|^{2}dx-\tfrac{1}{2}\int_{\R^{3}}W^{4}[5|h_{1}|^{2}+|h_{2}|^{2}]dx,
\]
with  $h=h_{1}+ih_{2}\in \dot{H}^{1}$.

To prove \eqref{Tay22}, first observe that using \eqref{Taylor}, we may write 
\[
\begin{split}
&E^{c}(u(t))-E^{c}(W)=E^{c}(e^{-i\theta(t)}u(t)) -E^{c}(\mu^{\frac{1}{2}}(t)W(\mu(t)\cdot))\\
&=\< (E^{c})^{\prime}(\mu^{\frac{1}{2}}(t)W(\mu(t)\cdot), g(t) \>
+\tfrac{1}{2}\< (E^{c})^{\prime\prime}(\mu^{\frac{1}{2}}(t)W(\mu(t)\cdot)[g(t)], g(t)\>+
o(\|g\|^{2}_{\dot{H}^{1}}).
\end{split}
\]
As $(E^{c})^{\prime}(\mu^{\frac{1}{2}}(t)W(\mu(t)\cdot)=0$, we obtain \eqref{Tay22}.

The quadratic form  $\F$ can be written
\[
\F(h)=\tfrac{1}{2}\<L_{1}h_{1},h_{1}\>+\tfrac{1}{2}\<L_{2}h_{2}, h_{2} \>, \qtq{where} h=h_{1}+ih_{2}\in\dot{ H}^{1}(\R^{3})
\]
and $L_{1}$ and $L_{2}$ are the bounded operators defined on $\dot{H}^{1}(\R^{3})$ by
\begin{align*}
L_{1}u=-\Delta u- 5W^{4}u,\quad L_{2}v&=-\Delta v-W^{4}v.	
\end{align*}

Writing $H:=\text{span}\left\{W, iW,W+x\cdot \nabla W\right\}$ (viewed as a subspace of $\dot H^1$), we have the following (see \cite[Claim~3.5]{DuyckaMerle2009}):
\begin{lemma}\label{CoerS}
There exists $C>0$ such that for radial $h=h_{1}+ih_{2}\in H^{\bot}$,
\[
\F(h)\geq C \|h\|^{2}_{\dot{H}^{1}}.
\]
\end{lemma}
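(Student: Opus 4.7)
The plan is to exploit the decoupling of $\F$ between real and imaginary parts: writing $\F(h)=\tfrac{1}{2}\langle L_{1}h_{1},h_{1}\rangle+\tfrac{1}{2}\langle L_{2}h_{2},h_{2}\rangle$, and noting that $W$ and $W_{1}$ are real while $iW$ is purely imaginary, the hypothesis $h\in H^{\perp}$ becomes the three $\dot H^{1}$-orthogonality conditions $h_{1}\perp W$, $h_{1}\perp W_{1}$, and $h_{2}\perp W$. It thus suffices to establish two separate coercivity estimates for radial functions: (a) $\langle L_{2}\psi,\psi\rangle\geq C\|\psi\|_{\dot H^{1}}^{2}$ whenever $\psi \perp W$ in $\dot H^{1}$, and (b) $\langle L_{1}\varphi,\varphi\rangle\geq C\|\varphi\|_{\dot H^{1}}^{2}$ whenever $\varphi$ is $\dot H^{1}$-orthogonal to both $W$ and $W_{1}$.

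For (a), the ground-state equation \eqref{ellipC} gives $L_{2}W=0$, and the chain of H\"older, \eqref{GI}--\eqref{C_GN}, and \eqref{PoQ} yields $\int W^{4}\psi^{2}\leq\|W\|_{6}^{4}\|\psi\|_{6}^{2}\leq C_{GN}^{2}\|W\|_{6}^{4}\|\nabla\psi\|_{2}^{2}=\|\nabla\psi\|_{2}^{2}$, so $L_{2}\geq 0$ on $\dot H^{1}$, with $\ker L_{2}=\mathrm{span}\{W\}$ (the simultaneous equality cases in H\"older and Sobolev force $\psi\in\mathrm{span}\{W\}$). Coercivity on $W^{\perp}$ is then a standard spectral-gap argument: if $\{\psi_{n}\}\subset W^{\perp}$ is a normalized sequence with $\langle L_{2}\psi_{n},\psi_{n}\rangle\to 0$, the radial Sobolev decay $|\psi_{n}(x)|\lesssim|x|^{-1/2}\|\psi_{n}\|_{\dot H^{1}}$ combined with $W^{4}(x)\lesssim\langle x\rangle^{-4}$ makes $\psi\mapsto\int W^{4}\psi^{2}$ weakly continuous on radial $\dot H^{1}$; any weak limit then lies in $\ker L_{2}\cap W^{\perp}=\{0\}$, forcing $\langle L_{2}\psi_{n},\psi_{n}\rangle\to 1$, a contradiction.

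For (b) the same contradiction/compactness scheme applies once the non-positive spectrum of $L_{1}$ in the radial sector is identified. The key ingredients are: differentiating the scaling family $\lambda^{1/2}W(\lambda\cdot)$ at $\lambda=1$ gives $L_{1}W_{1}=0$; the translation-induced zero modes $\partial_{j}W$ are killed by the radial assumption; and the direct computation $\langle L_{1}W,W\rangle=\|\nabla W\|_{2}^{2}-5\|W\|_{6}^{6}=-4\|\nabla W\|_{2}^{2}<0$ (using \eqref{PoQ}) exhibits a one-dimensional negative subspace. A min-max argument (or equivalently an ODE analysis of the radial Schr\"odinger operator $L_{1}$) then shows $\mathrm{span}\{W,W_{1}\}$ exhausts the non-positive subspace of $L_{1}$ in the radial sector; orthogonality to both these directions, together with compactness of the potential term on radial sequences, yields the required spectral gap.

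The main technical obstacle is the spectral analysis of $L_{1}$ in the $\dot H^{1}$ topology (rather than the more familiar $L^{2}$ framework), and in particular verifying that $W$ and $W_{1}$ exhaust the non-positive spectrum of $L_{1}$ restricted to radial functions. This is precisely the content of \cite[Claim 3.5]{DuyckaMerle2009}, whose argument carries over verbatim.
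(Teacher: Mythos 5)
The paper does not prove this lemma itself; it simply cites \cite[Claim~3.5]{DuyckaMerle2009}, and your sketch reconstructs exactly the argument underlying that reference: decouple $\F$ into the $L_{1}$ and $L_{2}$ quadratic forms via the orthogonality conditions $h_{1}\perp W$, $h_{1}\perp W_{1}$, $h_{2}\perp W$, identify the kernels via the elliptic equation and its scaling/phase invariances, and close with a weak-continuity/spectral-gap compactness argument. Since you yourself defer to \cite[Claim~3.5]{DuyckaMerle2009} for the one genuinely hard step (that $\mathrm{span}\{W,W_{1}\}$ exhausts the non-positive radial spectrum of $L_{1}$), your route is substantively the same as the paper's.
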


We can now begin proving the estimates appearing in Proposition~\ref{Modilation11}.

\begin{lemma}\label{BoundI} Let $(\theta(t), \mu(t))$ and $g(t)$ be as in Lemma \ref{ExistenceF}.  Then
\begin{equation}\label{DeltaBound}
\int_{\R^{3}}|u(t,x)|^{4}dx\lesssim \delta^{2}(t)\sim \|g(t)\|^{2}_{\dot{H}^{1}}.
\end{equation}
\end{lemma}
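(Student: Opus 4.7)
The strategy is to combine energy conservation with the modulation decomposition to reduce everything to a single parameter. The starting point is the exact identity
\[
\|u(t)\|_{L^4}^4 = -4[E^{c}(u(t))-E^{c}(W)],
\]
which follows from $E(u)=E^{c}(u)+\tfrac{1}{4}\|u\|_{L^4}^4$ together with the threshold assumption $E(u)=E^{c}(W)$. This already reduces the claimed $L^4$ bound to a control of $|E^{c}(u)-E^{c}(W)|$, which via the Taylor expansion \eqref{Tay22} reduces further to a quadratic estimate on the rescaled error $\tilde g(x):=\mu(t)^{-1/2}g(t,\mu(t)^{-1}x)$, which has the same $\dot H^{1}$ norm as $g$.

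Next I would decompose $\tilde g$ along $H=\text{span}\{W, iW, W_1\}$: writing $\tilde g_1 = aW+cW_1+h_1$ and $\tilde g_2 = h_2$ with $h:=h_1+ih_2\in H^{\perp}$, the orthogonalities \eqref{Ortogonality} (after rescaling) eliminate the $iW$-component entirely and force $c$ to be a fixed multiple of $a$. Using the spectral identities $L_1 W_1=0$, $L_1 W=-4W^{5}$, $L_2 W=0$ together with the scaling identity $\int W^{5} W_1\,dx=0$ (which follows from the invariance of $\|W\|_{L^6}^6$ under scaling), the quadratic form $\F(\tilde g)$ collapses to
\[
\F(\tilde g) = -2a^{2}\|W\|_{L^6}^{6} - 4a\int_{\R^{3}} W^{5} h_1\,dx + \F(h).
\]
A parallel computation using the modulation orthogonalities also yields
\[
\|\nabla u\|_{L^2}^{2}-\|\nabla W\|_{L^2}^{2} = \|\nabla g\|_{L^2}^{2} + 2a\,\Delta_{*},
\]
where $\Delta_{*}:=\|\nabla W\|_{L^2}^{2}-\<\nabla W,\nabla W_1\>_{L^2}^{2}/\|\nabla W_1\|_{L^2}^{2}>0$ by Cauchy--Schwarz.

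The crux is to show $\|h\|_{\dot H^{1}}\lesssim |a|$, so that the single scalar $a$ controls $\|g\|_{\dot H^{1}}$, $\delta(t)$, and $\|u\|_{L^4}^{4}$ simultaneously. For this I would invoke the sign constraint $\|u\|_{L^4}^{4}\geq 0$, which via the first identity forces $\F(\tilde g)\leq o(\|g\|_{\dot H^{1}}^{2})$. Combined with the coercivity $\F(h)\gtrsim \|h\|_{\dot H^{1}}^{2}$ from Lemma~\ref{CoerS} (applicable since $h$ is radial) and a Cauchy--Schwarz estimate on the cross term $a\int W^{5}h_1$, this yields $\|h\|_{\dot H^{1}}^{2}\lesssim a^{2}+o(\|g\|_{\dot H^{1}}^{2})$, and absorbing the remainder gives $\|g\|_{\dot H^{1}}\sim |a|$. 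In turn, since $\|\nabla g\|_{L^2}^{2}\lesssim a^{2}$ is of higher order than $|a|$, the second display shows $\delta(t)\sim |a|$. Feeding these equivalences back into the $\F$-expansion yields the chain $\|u\|_{L^4}^{4}\lesssim a^{2}\sim \delta(t)^{2}\sim \|g(t)\|_{\dot H^{1}}^{2}$. The only real technical subtlety is to carry the $o(\|g\|_{\dot H^{1}}^{2})$ remainder consistently through and absorb it into the linear side of the inequalities.
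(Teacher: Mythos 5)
Your proposal is correct and follows essentially the same strategy as the paper's: reduce via energy conservation and the Taylor expansion \eqref{Tay22} to a coercivity estimate for $\F$, place the remainder in $H^\perp$ using the modulation orthogonalities \eqref{Ortogonality}, and apply Lemma~\ref{CoerS} to control everything by one scalar. One small simplification you missed: since $\|\nabla(\lambda^{1/2}W(\lambda\cdot))\|_{L^2}$ is $\lambda$-independent, differentiating at $\lambda=1$ gives $(W,W_1)_{\dot H^1}=0$, so your coefficient $c$ is in fact zero and $\Delta_*=\|\nabla W\|_{L^2}^2$; likewise the cross term $a\int W^5 h_1\,dx=-a\,(W,h_1)_{\dot H^1}$ vanishes exactly by the $H^\perp$ condition, which is how the paper avoids the Cauchy--Schwarz absorption you invoke (though your absorption argument also works).
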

\begin{proof}
 By \eqref{Tay22} we  see that (recall $E(u(t))=E^{c}(W)$)
\begin{equation}\label{Aprox}
0=\F(\mu^{-\frac{1}{2}}(t)g(t, \mu^{-1}(t)\cdot))
+\tfrac{1}{4}\|u(t)\|^{4}_{L^{4}}+
o(\|g\|^{2}_{\dot{H}^{1}}).
\end{equation}
We now decompose $g(t)$ as follows
\begin{equation}\label{Defg}
g(t)=\alpha(t)\mu(t)^{\frac{1}{2}} W(\mu(t)\cdot)+h(t), \quad \text{with} \quad
\alpha(t):=\frac{(\mu^{-\frac{1}{2}}(t)g(t, \mu^{-1}(t)\cdot), W)_{\dot{H}^{1}}}{(W,W)_{\dot{H}^{1}}}.
\end{equation}
Note that $\alpha\in \R$ is chosen to guarantee that
\begin{align}\label{Alfor}
(\mu(t)^{\frac{1}{2}} W(\mu(t)\cdot), h(t))_{\dot{H}^{1}}=0.
\end{align}

Using \eqref{Taylor} and \eqref{Defg} we first observe that 
\begin{equation}\label{BoundR}
|\alpha(t)|\lesssim \|g\|_{\dot{H}^{1}}\ll 1.
\end{equation}
Moreover, by definition of $h$ (cf. \eqref{Alfor}) and  \eqref{Ortogonality} we have $\mu^{-\frac{1}{2}}(t)h(t, \mu^{-1}(t)\cdot)\in H^{\bot}$.
Thus, Lemma \ref{CoerS} implies
\[
\F(\mu^{-\frac{1}{2}}(t)h(t, \mu^{-1}(t)\cdot)) \gtrsim \|h\|^{2}_{\dot{H}^{1}}.
\]
Combining this with \eqref{Aprox} we deduce that
\[
\|h\|^{2}_{\dot{H}^{1}}+\|u\|^{4}_{L^{4}}\lesssim \alpha^{2}+|\alpha\langle L_{1}W, \mu^{-\frac{1}{2}}(t)h(t, \mu^{-1}(t)\cdot) \rangle|+
o(\|g\|^{2}_{\dot{H}^{1}}).
\]
Notice that $L_{2}(W)=0$, so that $L_{1}W=4\Delta W$. Since (recalling \eqref{Alfor})
\[
\langle L_{1}W, \mu^{-\frac{1}{2}}(t)h(t, \mu^{-1}(t)\cdot)\rangle=-4(W, \mu^{-\frac{1}{2}}(t)h(t, \mu^{-1}(t)\cdot))_{\dot{H}^{1}}=0,
\]
the inequality above shows
\[
\|h\|^{2}_{\dot{H}^{1}}+\|u\|^{4}_{L^{4}}\lesssim \alpha^{2}+o(\|h\|^{2}_{\dot{H}^{1}}).
\]
In particular,
\begin{equation}\label{BoundV}
\|h(t)\|^{2}_{\dot{H}^{1}}\lesssim \alpha^{2}(t)
\quad \text{and}\quad 
\|u(t)\|^{4}_{L^{4}}\lesssim \alpha^{2}(t).
\end{equation}

On the other hand, by the orthogonality condition \eqref{Alfor} we obtain
\[
\|g(t)\|^{2}_{\dot{H}^{1}}=\alpha^{2}(t)\| W\|^{2}_{\dot{H}^{1}}+\|h(t)\|^{2}_{\dot{H}^{1}},
\]
which implies by \eqref{BoundR} and \eqref{BoundV} that $\|g\|_{\dot{H}^{1}}\sim |\alpha|$.

Finally,  combining \eqref{Alfor}) and \eqref{BoundV} we obtain
\begin{align*}
\delta(t)&=\bigl|\|W\|^{2}_{\dot{H}^{1}}-\|[(1+\alpha)\mu(t)^{\frac{1}{2}} W(\mu(t)\cdot)+h]\|^{2}_{\dot{H}^{1}}\bigr|\\
	&=2|\alpha| \|W\|^{2}_{\dot{H}^{1}}+\O(\alpha^{2}),
\end{align*}
which shows $\delta\sim |\alpha|$. Combining the estimates above, we have
\[
\|u(t)\|^{4}_{L^{4}}\lesssim \alpha^{2}(t)\sim \delta^{2}(t)\sim \|g(t)\|^{2}_{\dot{H}^{1}}.
\]
\end{proof}

\begin{lemma}\label{BoundEx}
Under the conditions of Lemma~\ref{BoundI}, if $\delta_{0}$ is sufficiently small, then
\begin{equation}\label{EquaEx22}
\tfrac{1}{1+\mu(t)}\lesssim \delta(t) \quad \text{for all $t\in I_{0}$}.
\end{equation}
\end{lemma}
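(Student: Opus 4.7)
The plan is to use conservation of mass together with the decomposition \eqref{DecomU} to force $\mu(t)$ to be large when $\delta(t)$ is small. The driving mechanism is that $W\notin L^{2}(\R^{3})$: localizing $\mu^{1/2}W(\mu\cdot)$ to a ball $B_{R}$ yields an $L^{2}$-mass of order $\sqrt{R/\mu}$, which must be absorbed by the finite-mass pieces $u$ (controlled by mass conservation) and $g$ (controlled via Sobolev by $R\delta(t)$). Optimizing the cutoff radius $R$ against $\delta(t)$ will then produce the sharp bound.

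Concretely, I would fix a radial cutoff $\chi\in C_c^\infty(\R^3)$ with $\chi\equiv 1$ on $B_{1}$ and $\supp\chi\subset B_{2}$, and set $\chi_{R}(x):=\chi(x/R)$ for $R\geq 2$ to be chosen. From $u=e^{i\theta}[g+\mu^{1/2}W(\mu\cdot)]$ and the triangle inequality in $L^{2}$,
\[
\|\chi_{R}\mu^{1/2}W(\mu\cdot)\|_{L^{2}}\leq \|\chi_{R}u\|_{L^{2}}+\|\chi_{R}g\|_{L^{2}}.
\]
Mass conservation gives $\|\chi_{R}u\|_{L^{2}}\leq \|u\|_{L^{2}}=\sqrt{2M(u_{0})}$, while H\"older on $B_{2R}$ together with the Sobolev embedding $\dot{H}^{1}\hookrightarrow L^{6}$ and \eqref{Estimatemodu} gives
\[
\|\chi_{R}g\|_{L^{2}}\leq |B_{2R}|^{1/3}\|g\|_{L^{6}}\lesssim R\|\nabla g\|_{L^{2}}\lesssim R\delta(t).
\]
For the left-hand side, the change of variables $y=\mu x$ and the lower bound $W(y)^{2}\gtrsim |y|^{-2}$ for $|y|\geq 1$ yield
\[
\|\chi_{R}\mu^{1/2}W(\mu\cdot)\|_{L^{2}}^{2}\geq \mu^{-2}\int_{|y|\leq \mu R}W(y)^{2}\,dy\gtrsim R/\mu(t),
\]
using $\mu(t)\geq 1$ (from Remark \ref{Yinfinity}) and $R\geq 2$ to guarantee $\mu R\geq 2$.

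Combining these three bounds and squaring produces
\[
\frac{1}{\mu(t)}\lesssim \frac{1}{R}+R\delta(t)^{2}.
\]
Choosing $R\sim \delta(t)^{-1}$ (admissible since $\delta_{0}$ small forces $\delta(t)^{-1}\geq 2$) balances the two terms and gives $\mu(t)^{-1}\lesssim \delta(t)$, hence $(1+\mu(t))^{-1}\lesssim \delta(t)$ as claimed.

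The main obstacle is recognizing the need for the $R$-dependent optimization: a fixed choice of $R$ only recovers the qualitative bound $\mu(t)\gtrsim 1$ already furnished by Remark \ref{Yinfinity}. The quantitative sharpening to $\mu(t)\gtrsim 1/\delta(t)$ is available precisely because the two sides of the triangle inequality carry matching scalings in $R$, a reflection of the sharp long-range decay $W(y)\sim |y|^{-1}$ of the ground state.
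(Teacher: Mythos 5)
Your proof is correct but takes a genuinely different route from the paper's. The paper works on a \emph{fixed} unit ball $B(0,1)$: it bounds $\mu^{1/2}W(\mu\cdot)$ from below pointwise on $B(0,1)$ by $\gtrsim\mu^{-1/2}$ (using the tail decay of $W$), and bounds $\|u\|_{L^4(B(0,1))}\lesssim\delta^{1/2}$ via the modulation estimate $\|u\|_{L^4}^4\lesssim\delta^2$ of Lemma~\ref{BoundI}, together with the Sobolev bound $\|g\|_{L^4(B(0,1))}\lesssim\delta$; combining these in $L^4$ gives $\mu^{-2}\lesssim\delta^2$ directly, with no optimization. You instead work in $L^2$ on an $R$-ball with $R$ to be optimized: you replace the $L^4$ modulation estimate on $u$ with conservation of mass, quantify the divergence $\int W^2=\infty$ by $\int_{|y|\le\mu R}W^2\gtrsim\mu R$, and then balance $R^{-1}$ against $R\delta^2$ by taking $R\sim\delta^{-1}$. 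Both arguments ultimately exploit the same slow decay $W(y)\sim|y|^{-1}$, but the paper's encodes it in a pointwise lower bound at a fixed scale while yours encodes it in the $L^2$ divergence at the optimized scale. Your version has the small advantage of not needing the $L^4$ output of Lemma~\ref{BoundI} (only $\|g\|_{\dot H^1}\lesssim\delta$ and conservation of mass), so it isolates a slightly more elementary mechanism; the paper's avoids the optimization step and reads more compactly. Both require $\mu(t)\geq 1$ from Remark~\ref{Yinfinity}, and both yield the claimed rate $1/(1+\mu)\lesssim\delta$.
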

\begin{proof}
By Remark \ref{Yinfinity}, we have that if $\delta_{0}$ is sufficiently small, then
\[
\delta(t)<\delta_{0}\Rightarrow \mu(t)\geq 1.
\]
As $W(x)=(1+\frac{1}{3}|x|^{2})^{-\frac{1}{2}}$ we see that 
\begin{align}\label{EstWl}
	\mu(t)^{1/2}W(\mu(t)x)\geq \tfrac{1}{(1+\mu(t)^{2})^{1/2}} \qtq{for all $|x|\leq 1$ and $t\in I_{0}$.}
\end{align}
On the other hand, by H\"older and Sobolev, we get
\begin{align*}
\|e^{-i\theta(t)}[u(t)-e^{i\theta(t)}\mu(t)^{\frac{1}{2}}W(\mu(t)\cdot)]\|_{L^{4}(B(0,1))}&=\|g(t)\|_{L^{4}(B(0,1))}\\
&\lesssim
\|g(t)\|_{L^{6}}
\lesssim \|g(t)\|_{\dot{H}^{1}}\lesssim \delta(t).
\end{align*}
Thus, by \eqref{DeltaBound} we deduce
\[
\|\mu(t)^{\frac{1}{2}}W(\mu(t)\cdot)]\|_{L^{4}(B(0,1))}\lesssim \delta(t)+\delta(t)^{1/2}
\lesssim \delta(t)^{1/2}.
\]

Now, combining \eqref{EstWl} and the inequality above we get
\[
\tfrac{1}{(1+\mu(t)^{2})^{2}}\lesssim \|\mu(t)^{\frac{1}{2}}W(\mu(t)\cdot)]\|^{4}_{L^{4}(B(0,1))}
\lesssim \delta(t)^{2}.
\]
\end{proof}

\begin{lemma}\label{BoundEx22}
Under the conditions of Lemma \ref{BoundI}, we have the following estimate
\begin{equation}\label{EquaEx}
|\tfrac{\mu^{\prime}(t)}{\mu(t)}|\lesssim \mu^{2}(t)\delta(t) \quad \text{for all $t\in I_{0}$}.
\end{equation}
\end{lemma}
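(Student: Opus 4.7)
The plan is to obtain the estimate by differentiating the two orthogonality conditions in \eqref{Ortogonality} in time, substituting the evolution equation for $\partial_t g$ coming from the NLS, and solving the resulting $2 \times 2$ linear system for $(\theta'(t), \mu'(t)/\mu(t))$. Writing $W_\mu(x) := \mu^{1/2} W(\mu x)$ and using $-\Delta W_\mu = W_\mu^5$ together with $\partial_t W_\mu = (\mu'/\mu) W_{1,\mu}$, a short computation gives
\[
\partial_t g = L_{2,\mu} g_2 - i L_{1,\mu} g_1 - i \theta'(W_\mu + g) - \tfrac{\mu'}{\mu} W_{1,\mu} + i Q_5(g) - i |W_\mu + g|^2 (W_\mu + g),
\]
where $Q_5(g) = O(|g|^2)$ collects the super-linear part of the quintic nonlinearity. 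Taking $\dot H^1$-inner products with $W_\mu$ and $W_{1,\mu}$, then converting to $L^2$ pairings via $-\Delta W_\mu = W_\mu^5$ and $-\Delta W_{1,\mu} = 5 W_\mu^4 W_{1,\mu}$ (the latter being equivalent to $L_1 W_1 = 0$, obtained by differentiating \eqref{ellipC} in the scaling direction), I produce a linear system whose diagonal entries are the positive constants $\|W\|_{L^6}^6$ and $\|W_1\|_{\dot H^1}^2 = 5 \int W^4 W_1^2$; the off-diagonal entries and the cross terms coming from the $t$-derivatives of $W_\mu$ and $W_{1,\mu}$ are all $O(\delta)$ by H\"older and the scaling invariance of the $\dot H^1$ norm. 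Hence the matrix is diagonally dominant once $\delta_0$ is small.

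\medskip

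The right-hand side of this system is estimated contribution by contribution using scaling. The quintic linearization $L_{j,\mu} g_j$ paired with $W_\mu^5$ (respectively $W_\mu^4 W_{1,\mu}$) is handled via self-adjointness: transferring the operator onto the test function and changing variables $y=\mu x$, the pairing reduces to an integral of $g_j(\cdot/\mu)$ against a smooth scale-invariant weight. The rescaling produces a factor $\mu^{1/2}$ from $\|g_j(\cdot/\mu)\|_{L^6}$ and $\mu^{3/2}$ from the residual integral, for a total of $O(\mu^2 \delta)$; the super-quadratic remainder $Q_5(g)$ is $O(\mu^2 \delta^2)$. The cubic nonlinearity is the delicate term: its leading piece $W_\mu^3$ is \emph{real}, so in the equation for $\partial_t g_2$ it contributes the large term $(-W_\mu^3, W_\mu^5)_{L^2} = -\mu \|W\|_{L^8}^8$, forcing $\theta' = O(\mu)$. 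However, in the equation for $\partial_t g_1$ only the imaginary linearization $W_\mu^2 g_2$ enters; pairing with $W_\mu^4 W_{1,\mu}$ gives $(g_2, W_\mu^6 W_{1,\mu})_{L^2} = O(\mu \delta)$. Consequently, after inverting, the first row yields $\theta' = O(\mu)$ and the second row yields $|\mu'/\mu| \lesssim \mu^2 \delta$, since $\theta' \cdot O(\delta) = O(\mu\delta) \le \mu^2 \delta$.

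\medskip

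\textbf{Main obstacle.} The delicate step is keeping careful track of how the powers of $\mu$ distribute across each integral after rescaling, and verifying that the cubic nonlinearity---which breaks the scale-invariance of the underlying energy-critical problem---produces an $O(\mu)$ leading contribution only to the $\theta'$-equation, not to the $\mu'/\mu$-equation. This cancellation, reflecting the fact that $W_\mu^3$ is real-valued, is what gives the clean $\mu^2 \delta$ bound.
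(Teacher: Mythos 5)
Your proposal is essentially the same route as the paper's: take $\dot H^1$-inner products of the $g$-equation with $W_\mu$ and $iW_{1,\mu}$ (equivalently, differentiate the orthogonality conditions and substitute the evolution equation) and estimate term by term via scaling. The one genuine difference is the treatment of the cubic nonlinearity. The paper does not expand $|u|^2 u$ around $W_\mu$; instead it estimates the whole cubic term at once by H\"older, feeding in Lemma~\ref{BoundI}'s input $\|u\|_{L^4}^2 \lesssim \delta$, to get $|(|u|^2 u, \varphi)_{\dot H^1}| \leq \|u\|_{L^4}^2 \|u\|_{L^3} \|\Delta\varphi\|_{L^6} \lesssim \delta \mu^2$ uniformly for $\varphi \in \{W_\mu, iW_{1,\mu}\}$ (see \eqref{MassS}). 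This bypasses the expansion entirely and makes the reality of $W_\mu^3$ irrelevant.

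Two of your intermediate steps are off, although the final conclusion still holds. You claim $\theta' = O(\mu)$ based on the cubic leading term $-\mu\|W\|_{L^8}^8$, but as you yourself noted, the $L_{j,\mu} g_j$ contributions to the same equation are $O(\mu^2 \delta)$, and by Lemma~\ref{BoundEx} one has $1/\mu \lesssim \delta$, hence $\mu \lesssim \mu^2 \delta$. So the quintic/Laplacian terms dominate and the correct bound is only $\theta' = O(\mu^2 \delta)$. Fortunately this still gives $\theta' \cdot O(\delta) = O(\mu^2 \delta^2) \leq \mu^2 \delta$, so the $\mu'/\mu$ estimate survives. For the same reason, the reality cancellation you emphasize is less decisive than you indicate: even an $O(\mu)$ cubic contribution to the $\mu'/\mu$ equation would be absorbed into $\mu^2\delta$ using $\mu \lesssim \mu^2\delta$. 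The inequality $1/\mu \lesssim \delta$ --- ultimately a consequence of $W \notin L^2$ --- is what really powers the argument, and in the paper's version the cancellation never even arises because the $\|u\|_{L^4}^2 \lesssim \delta$ estimate already renders the full cubic term $O(\mu^2\delta)$.
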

\begin{proof}
We temporarily adopt the following notation
\[
W_{[\mu(t)]}(x)=\mu(t)^{\frac{1}{2}} W(\mu(t)x),
\quad
W_{1, [\mu(t)]}(x)=\mu(t)^{\frac{1}{2}} W_{1}(\mu(t)x),
\]
where $W_{1}=\frac{1}{2}W+x\cdot \nabla W$. In this notation, we have \[
g(t)=e^{-i\theta(t)}[u(t)-e^{i\theta(t)}W_{[\mu(t)]}].
\] 

Using \eqref{NLS}, we may write
\begin{equation}\label{EquationG}
\begin{split}
i\partial_{t}g+\Delta g-\theta^{\prime}g
-\theta^{\prime}W_{[\mu(t)]}+i\tfrac{\mu^{\prime}(t)}{\mu(t)}W_{1, [\mu(t)]}\\
-|u|^{2}e^{-i\theta}u+[f(e^{-i\theta} u)-f(W_{[\mu(t)]})]=0,
\end{split}
\end{equation}
where $f(z)=|z|^{4}z$.  We now observe that for $\varphi$ with $\Delta\varphi\in L^6$, we have 
\begin{align}
|(f(e^{-i\theta}u)&-f(W_{[\mu(t)]}), \varphi)_{\dot{H}^{1}}|\nonumber \\
&=|(f(e^{-i\theta } u)-f(W_{[\mu(t)]}), \Delta \varphi)_{L^{2}}| \label{FIne}\\
& \leq \|e^{-i\theta} u-W_{[\mu(t)]}\|_{L^{6}}[\|e^{-i\theta} u\|^{4}_{L^{6}}+\|W\|^{4}_{L^{6}}]\|\Delta \varphi\|_{L^{6}} \nonumber \\
& \lesssim \delta(t)\|\Delta \varphi\|_{L^{6}}.\nonumber
\end{align}
Moreover, by H\"older, interpolation, conservation of mass, and \eqref{DeltaBound},
\begin{align}
|(|u|^{2}u, \varphi)_{\dot{H}^{1}}| & =|(|u|^{2}u, \Delta\varphi)_{L^{2}}| \label{MassS} \\
& \leq \|u\|^{2}_{L^{4}}  \|u\|_{L^{3}}\|\Delta \varphi\|_{L^{6}} \nonumber \\
&		\lesssim\delta(t)^{1+\frac{2}{3}}\|\Delta \varphi\|_{L^{6}} \lesssim\delta(t)\|\Delta \varphi\|_{L^{6}}.\nonumber
\end{align}
Note also that
\begin{align}\label{Dg1}
&|(\Delta g, W_{[\mu(t)]})_{\dot{H}^{1}}|
	=|( g,\Delta W_{[\mu(t)]})_{\dot{H}^{1}}|\leq \mu^{2}(t)\|g\|_{\dot{H}^{1}}\|\Delta W\|_{\dot{H}^{1}}
	\lesssim \mu^{2}(t)\delta(t),\\\label{Dg2}
&	|(\Delta g, W_{1, [\mu(t)]})_{\dot{H}^{1}}|
\leq \mu^{2}(t)\|g\|_{\dot{H}^{1}}\|\Delta W_{1}\|_{\dot{H}^{1}}
	\lesssim \mu^{2}(t)\delta(t).
\end{align}

We will now prove that 
\begin{align}\label{Ang}
|\theta^{\prime}|\lesssim \mu^{2}(t)\delta(t)+|\tfrac{\mu^{\prime}(t)}{\mu(t)}|\delta(t).
\end{align}
Indeed, by the orthogonality condition \eqref{Ortogonality} we deduce that
\begin{align*}
|(i\partial_{t} g, W_{[\mu(t)]})_{\dot{H}^{1}}|&=|\IM( g, \partial_{t} W_{[\mu(t)]})_{\dot{H}^{1}}| \\
&=\left|\tfrac{\mu^{\prime}(t)}{\mu(t)}\right||\IM( g, W_{[1, \mu(t)]})_{\dot{H}^{1}}|\lesssim\left|\tfrac{\mu^{\prime}(t)}{\mu(t)}\right|\delta(t).
\end{align*}
Thus, taking the $\dot{H}^{1}$ inner product of \eqref{EquationG} with $W_{[\mu(t)]}$ and using \eqref{Dg1}, \eqref{MassS}, and \eqref{FIne} (with $\varphi=W_{[\mu(t)]}$), we obtain
\[
|\theta^{\prime}|\lesssim  |\theta^{\prime}|\delta(t)+ \mu^{2}(t)\delta(t)+|\tfrac{\mu^{\prime}(t)}{\mu(t)}|\delta(t),
\]
where we have used that $\|\Delta W_{[\mu(t)]}\|_{L^{2}}=\mu^{2}(t)\|\Delta W\|_{L^{2}}$.  As $\delta(t)\ll 1$, the estimate above yields \eqref{Ang}.

Next we claim that
\begin{align}\label{Dlam}
\left|\tfrac{\mu^{\prime}(t)}{\mu(t)}\right|\lesssim
	|\theta^{\prime}|\delta(t) + \mu^{2}(t)\delta(t)+|\tfrac{\mu^{\prime}(t)}{\mu(t)}|\delta(t).
\end{align}
Indeed, taking the $\dot{H}^{1}$ inner product of \eqref{EquationG} with $i\,W_{1, [ \mu(t)]}$, the estimates
 \eqref{MassS} and \eqref{FIne} (with $\varphi=i\, W_{1, [ \mu(t)]}$) and \eqref{Dg2} yield
\[
\left|\tfrac{\mu^{\prime}(t)}{\mu(t)}\right|\lesssim
|(i\partial_{t} g, i W_{1, [\mu(t)]})_{\dot{H}^{1}}|+ \mu^{2}(t)\delta(t)
+|\theta^{\prime}|\delta(t).
\]
In addition, by the orthogonality condition \eqref{Ortogonality} we have
\begin{align*}
|(i&\partial_{t} g, i W_{1, [\mu(t)]})_{\dot{H}^{1}}|=|( g, \partial_{t} W_{1, [\mu(t)]})_{\dot{H}^{1}}|\\
&\lesssim|\tfrac{\mu^{\prime}(t)}{\mu(t)}|\|g\|_{\dot{H}^{1}}\|\tfrac{1}{2}W_{1}+x\cdot\nabla W_{1}\|_{\dot{H}^{1}}\lesssim |\tfrac{\mu^{\prime}(t)}{\mu(t)}|\delta(t),
\end{align*}
which implies claim \eqref{Dlam}. Putting together \eqref{Ang} and \eqref{Dlam} yields \eqref{EquaEx}.
\end{proof}

Proposition~\ref{Modilation11} now follows from Lemmas~\ref{ExistenceF}, \ref{BoundI}, \ref{BoundEx}, and \ref{BoundEx22}.

\section{Blowup}\label{Sec22}

In this section we prove Theorem~\ref{Th2}~(ii).  We proceed by proving several lemmas. We recall the definition of $I_R[u]$ from \eqref{I_R}.

\begin{lemma}\label{Lemma11}
Suppose $u$ is a solution to \eqref{NLS} satisfying
\begin{equation}\label{BCon11}
E(u)=E^{c}(W)\qtq{and}
\|\nabla u_{0}\|_{L^{2}}>\|\nabla W\|_{L^{2}}.
\end{equation}
If $u$ is forward-global, then
\begin{align}\label{TBu}
|I_{R}[u(t)]|\lesssim R^{2} \delta(t) \qtq{for all $t\geq 0$.}
\end{align}
\end{lemma}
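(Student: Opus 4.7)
The plan is to split the argument by whether $u(t)$ lies in the modulation regime $I_0 = \{t \geq 0 : \delta(t) < \delta_0\}$ of Section~\ref{S:Modulation} or not. Throughout, write $W_\mu(x) := \mu^{\frac{1}{2}} W(\mu x)$. On $I_0$, I would invoke Proposition~\ref{Modilation11} to write $u(t,x) = e^{i\theta(t)}[W_{\mu(t)}(x) + g(t,x)]$ with $\|g(t)\|_{\dot{H}^{1}}^{2} \sim \delta(t)$ and, crucially, $\mu(t) \gtrsim 1/\delta(t)$. Since $W_\mu$ is real and $I_R$ is invariant under multiplication by a global phase, $I_R[e^{i\theta}W_\mu] = 0$, so writing $g = g_1 + ig_2$,
\[
I_R[u] = 2\int \nabla w_R \cdot (W_\mu \nabla g_2 - g_2 \nabla W_\mu)\,dx + 2 \int \nabla w_R \cdot (g_1 \nabla g_2 - g_2 \nabla g_1)\,dx,
\]
which contains only terms linear and quadratic in the small remainder $g$.

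I would first estimate the quadratic (self) term using $|\nabla w_R| \lesssim R$, $\|\nabla g\|_{L^2} \lesssim \delta^{1/2}$, and the H\"older/Sobolev bound $\|g\|_{L^2(B_{2R})} \leq |B_{2R}|^{1/3}\|g\|_{L^6} \lesssim R\|\nabla g\|_{L^2} \lesssim R\delta^{1/2}$, producing a contribution $\lesssim R^2\delta(t)$. For the linear cross term I would integrate by parts to rewrite it as $-2\int g_2 W_\mu \Delta w_R\,dx - 4\int g_2\,\nabla w_R \cdot \nabla W_\mu\,dx$. The first piece is handled using $\|\Delta w_R\|_{L^\infty} \lesssim 1$ and the scaling identity $\|W_\mu\|_{L^2(B_{2R})}^2 \lesssim R/\mu \lesssim R\delta$ (via the substitution $y = \mu x$ together with $W(y) \sim |y|^{-1}$ at infinity), yielding $\lesssim R\delta^{1/2}\cdot (R\delta)^{1/2} = R^{3/2}\delta \leq R^2\delta$. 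For the second, I would use the pointwise refinement $|\nabla w_R(x)| \lesssim |x|$ together with the weighted scaling estimate $\int_{B_{2R}} |x|^2 |\nabla W_\mu|^2\,dx \lesssim R/\mu \lesssim R\delta$ (again relying on $|\nabla W(y)| \lesssim |y|^{-2}$ at infinity), which gives $\lesssim R\delta^{1/2} \cdot (R\delta)^{1/2} = R^{3/2}\delta$.

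For $t \notin I_0$ we have $\delta(t) \geq \delta_0$, and since Lemma~\ref{GlobalS} keeps $u$ in the blowup regime, $\|\nabla u(t)\|_{L^2}^2 = \|\nabla W\|_{L^2}^2 + \delta(t)$, which is comparable to $\delta(t)$ once $\delta(t) \geq \delta_0$. Combining mass conservation with the crude estimate $|I_R[u]| \leq 2R\|u_0\|_{L^2}\|\nabla u(t)\|_{L^2}$ yields $|I_R[u(t)]| \lesssim R\delta(t)^{1/2}$, and absorbing the extra factor into the implicit constant (which may depend on $u_0$ and $\delta_0$) gives the desired $\lesssim R^2\delta(t)$. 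I expect the main obstacle to be the linear cross term $\int g_2\,\nabla w_R \cdot \nabla W_\mu\,dx$: a naive bound using $\|\nabla W_\mu\|_{L^2} \sim 1$ only produces $R^2\delta^{1/2}$, so the refined pointwise bound $|\nabla w_R| \lesssim |x|$ combined with the decay of $\nabla W$ at infinity is essential for extracting the missing factor of $\delta^{1/2}$.
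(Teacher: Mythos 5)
Your proposal reaches the right conclusion and follows the same high-level split---estimate crudely when $\delta(t)$ is bounded below, and on $I_0$ exploit the vanishing of $I_R$ on phase-rotated rescalings of $W$---but it is substantially more elaborate than the paper's argument, and the extra work stems from a mis-statement of the modulation estimate: Proposition~\ref{Modilation11} (see \eqref{Estimatemodu}) gives $\|g(t)\|_{\dot H^1}\sim\delta(t)$, not $\|g(t)\|_{\dot H^1}^2\sim\delta(t)$ as you write. You in fact only invoke the upper bound $\|g\|_{\dot H^1}\lesssim\delta^{1/2}$, which is weaker than but still implied by the true relation, so your estimates are not wrong; but with only $\|g\|_{\dot H^1}\lesssim\delta^{1/2}$ the direct bound $|I_R[u]|\lesssim R^2\|g\|_{\dot H^1}$ would not give $R^2\delta$, and this is precisely what drives you to the linear/quadratic split, the integration by parts, the refined bound $|\nabla w_R|\lesssim|x|$, the weighted scaling identities for $W_\mu$, and the input $1/\mu\lesssim\delta$. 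None of that machinery is needed. Writing $v(t):=e^{i\theta(t)}\mu(t)^{1/2}W(\mu(t)\cdot)$, the paper uses $I_R[v]=0$ (since $W$ is real) together with the telescoping $\bar u\nabla u-\bar v\nabla v=(\bar u-\bar v)\nabla u+\bar v\,\nabla(u-v)$, and then one application of H\"older and Sobolev on $\{|x|\leq 2R\}$ (via $|\nabla w_R|\lesssim R$ and $\|f\|_{L^2(|x|\leq 2R)}\lesssim R\|f\|_{L^6}\lesssim R\|\nabla f\|_{L^2}$) gives directly
\[
|I_R[u(t)]|\lesssim R^2\bigl(\|\nabla u(t)\|_{L^2}+\|\nabla W\|_{L^2}\bigr)\|g(t)\|_{\dot H^1}\lesssim R^2\,\delta(t).
\]
In the regime $\delta(t)\geq\delta_1$ the same H\"older/Sobolev step yields $|I_R[u]|\lesssim R^2\|\nabla u\|_{L^2}^2=R^2\bigl(\|\nabla W\|_{L^2}^2+\delta(t)\bigr)\lesssim_{\delta_1}R^2\delta(t)$, whereas you use mass conservation plus $R\geq1$ to convert $R\,\delta^{1/2}$ into $R^2\delta$; both routes are valid, though the paper's version stays purely in $\dot H^1$ and is cleaner.
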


\begin{proof}  We fix $\delta_1\in(0,\delta_0)$ and obtain estimates separately in the cases $\delta(t)\geq \delta_1$ or $\delta(t)<\delta_1$.  In the first case, we use H\"older and Sobolev embedding to estimate
\begin{align*}
|I_R[u(t)]| &\lesssim R^2\|\nabla u(t)\|_{L^2}^2 \\
& \lesssim R^2\{\delta(t) + \|\nabla W\|_{L^2}^2\} \lesssim R^2\{1+\delta_1^{-1}\|\nabla W\|_{L^2}^2\}\delta(t). 
\end{align*}
In the latter case, we use the fact that $W$ is real to deduce
\begin{align*}
|I_{R}[u(t)]|&\leq \left|2\IM\int_{\R^{3}}\nabla w_{R}(\bar{u} \nabla u-e^{-i\theta(t)}W_{[\lambda(t)]} 
\nabla [ e^{i\theta(t)}W_{[\lambda(t)]}])
dx\right|\\
&\lesssim
R^{2}[\|u(t)\|_{\dot{H}^{1}_{x}}+\|W\|_{\dot{H}^{1}}]
\|u(t)-e^{i\theta (t)}W_{[\lambda(t)]}\|_{\dot{H}^{1}}\\
&\lesssim_{W}R^{2} \|g(t)\|_{\dot{H}^{1}}   \lesssim_{W}R^{2} \delta(t),
\end{align*}
where in the last inequality we have used \eqref{Estimatemodu} with $\lambda(t):=\mu(t)$ for $t\in I_{0}$ and  the notation $W_{[\lambda(t)]}:=\lambda(t)^{\frac{1}{2}}W(\lambda(t)\cdot)$. \end{proof}

In the next lemma, we make use of the notation introduced in Section~\ref{S:virial}.

\begin{lemma}\label{LemmaB22}
Under assumptions of Lemma~\ref{Lemma11}, if $u$ is forward-global, then there exists $R_{1}\geq 1$,  such that for $R\geq R_{1}$ and $t\geq 0$ we have
\begin{align}\label{BNI}
	\tfrac{d}{dt}I_{R}[u(t)]&\leq -14 \delta(t).
\end{align}
\end{lemma}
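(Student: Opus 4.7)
I would start from the localized virial identity $\tfrac{d}{dt}I_R[u]=F_R[u]$ (Lemma \ref{VirialIden}) and split $F_R[u]=F^c_\infty[u]+(F^c_R[u]-F^c_\infty[u])+\int\Delta w_R\,|u|^4\,dx$. The energy identity $E(u)=E^c(W)=\tfrac13\|\nabla W\|_{L^2}^2$ (from \eqref{PoQ}) combined with $\delta(t)=\|\nabla u(t)\|_{L^2}^2-\|\nabla W\|_{L^2}^2>0$ (Lemma \ref{GlobalS}) yields the clean main-term identity
\[
F^c_\infty[u]\;=\;8\|\nabla u\|_{L^2}^2-8\|u\|_{L^6}^6\;=\;-16\,\delta(t)-12\,\|u(t)\|_{L^4}^4.
\]
Since $\Delta w_R=6$ on $\{|x|\leq R\}$, $|\Delta w_R|\lesssim 1$, and $\mathrm{supp}\,\Delta w_R\subseteq\{|x|\leq 2R\}$, one also has $\int\Delta w_R|u|^4\leq 6\|u\|_{L^4}^4+C\int_{R<|x|<2R}|u|^4$. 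The claim $F_R[u]\leq -14\delta(t)$ thus reduces to showing
\[
(F^c_R[u]-F^c_\infty[u])+C\int_{R<|x|<2R}|u|^4\,dx\;\leq\;2\,\delta(t)+6\,\|u(t)\|_{L^4}^4.
\]
I would then choose the profile $\phi$ in \eqref{WRR} so that for radial $u$ the quantities $(4w_R''(r)-8)|u_r|^2$ on $\{|x|>R\}$ and $-8|u_r|^2$ on $\{|x|>2R\}$ (which appear in $F^c_R[u]-F^c_\infty[u]$) have non-positive contributions that can be dropped from the upper bound. What remains is controlled, up to constants, by $R^{-2}\int_{R<|x|<2R}|u|^2+\int_{|x|>R}|u|^4+\int_{|x|>R}|u|^6$.

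The heart of the argument is to dominate these tail errors by $2\delta(t)+6\|u\|_{L^4}^4$ uniformly in $t$, which I would do by splitting on the size of $\delta(t)$. In the regime $\delta(t)\geq \delta_1$ for a small fixed $\delta_1\in(0,\delta_0)$, the Strauss radial estimate $|u(x)|^2\lesssim|x|^{-2}\|u\|_{L^2}\|\nabla u\|_{L^2}$, combined with mass conservation and $\|\nabla u\|_{L^2}^2\leq\delta(t)+\|\nabla W\|_{L^2}^2$, gives
\[
\int_{|x|>R}|u|^6\lesssim R^{-4}(1+\delta(t)),\qquad\int_{|x|>R}|u|^4\lesssim R^{-2}(1+\delta(t))^{1/2},\qquad R^{-2}\|u\|_{L^2}^2\lesssim R^{-2}M(u_0),
\]
each of which is bounded by $2\delta(t)/3$ once $R\geq R_1$ with $R_1$ depending only on $\delta_1,M(u_0),\|\nabla W\|_{L^2}$. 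In the regime $\delta(t)<\delta_1$, I would invoke Proposition \ref{Modilation11} to write $u(t)=e^{i\theta(t)}[g(t)+\mu(t)^{1/2}W(\mu(t)\cdot)]$ with $\|g\|_{\dot H^1}\sim\delta$, $\mu\gtrsim\delta^{-1}$, and $\|u\|_{L^4}^4\lesssim\delta^2$. Then $\int_{|x|>R}|u|^4\leq\|u\|_{L^4}^4\lesssim\delta^2$; decomposing $u$ into $g$ and $W_{[\mu]}:=\mu^{1/2}W(\mu\cdot)$ and using Sobolev on $g$ together with a change of variables on $W_{[\mu]}$ gives $\int_{|x|>R}|u|^6\lesssim\|\nabla g\|^6+(\mu R)^{-3}\lesssim\delta^3$; finally the radial $\dot H^1$ decay $|g(r)|^2\lesssim r^{-1}\|\nabla g\|_{L^2}^2$ together with the direct computation $\int_{R<|x|<2R}|W_{[\mu]}|^2\sim R/\mu$ yields $\int_{R<|x|<2R}|u|^2\lesssim R^2\delta$, and hence $R^{-2}\int_{R<|x|<2R}|u|^2\lesssim \delta$. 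Choosing $R_1$ large enough to cover both regimes closes the inequality.

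\textbf{Main obstacle.} The delicate step is the control of $R^{-2}\int_{R<|x|<2R}|u|^2$ in the modulation regime, where $\delta(t)$ can be arbitrarily small. A bound relying only on mass conservation produces $O(R^{-2})$ with no $\delta$-dependence, which cannot be absorbed into an arbitrarily small $\delta(t)$. The resolution is that in the modulation regime $u$ is close in $\dot H^1$ to a highly concentrated ground state at scale $1/\mu\lesssim\delta$, so both the correction $g$ (by the $\dot H^1$ radial decay) and $W_{[\mu]}$ (by direct scaling) carry only $O(R^2\delta)$ and $O(R\delta)$ mass, respectively, inside the annulus $R<|x|<2R$. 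A similar $\delta$-dependent gain in the sixth-power tail follows from the explicit scaling of $W_{[\mu]}$.
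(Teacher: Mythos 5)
Your proposal is correct, and the overall architecture — the identity $F^c_\infty[u]=-16\delta(t)-12\|u(t)\|^4_{L^4}$, the choice of a weight with $w_R''\le 2$ so that the kinetic errors on $\{|x|>R\}$ have a favorable sign, the split into the two regimes $\delta(t)\ge\delta_1$ and $\delta(t)<\delta_1$, and radial Sobolev plus mass conservation in the first regime — is exactly the paper's. The one genuine structural difference is in the regime $\delta(t)<\delta_1$: the paper invokes the \emph{modulated} virial identity of Lemma~\ref{VirialModulate}, subtracting the ground-state contribution $\K(t)$ so that every error term becomes a difference $|u|^2-|W(t)|^2$, $|\nabla u|^2-|\nabla W(t)|^2$, $|u|^6-|W(t)|^6$, etc., each of which carries an explicit factor $\|g(t)\|_{\dot H^1}\sim\delta(t)$; the remaining factor is then controlled using $\lambda_{\mathrm{inf}}>0$. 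You instead keep the raw error $F^c_R-F^c_\infty$ and bound the tail quantities $R^{-2}\int_{R<|x|<2R}|u|^2$, $\int_{|x|>R}|u|^4$, $\int_{|x|>R}|u|^6$ directly by inserting $u=e^{i\theta}[g+W_{[\mu]}]$ and exploiting the quantitative concentration $\mu\gtrsim\delta^{-1}$ from \eqref{Estimatemodu}, which makes the ground-state part of each annulus integral of size $O(R\delta)$, $O(\delta^3 R^{-3})$, etc. Your route avoids Lemma~\ref{VirialModulate} entirely and is somewhat more direct; what it uses in exchange is the sharper $H^1$-specific input $\mu\gtrsim\delta^{-1}$ (itself a consequence of mass conservation), whereas the paper's cancellation-via-$\K(t)$ structure only needs the cruder fact that $\lambda(t)$ is bounded below and is therefore closer in spirit to the $\dot H^1$ argument of Duyckaerts--Merle. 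Both yield $|\eqref{Modu11}|+|\eqref{Modu22}|\le 2\delta(t)$ (resp.\ your $2\delta+6\|u\|^4_{L^4}$) after choosing $\delta_1$ small and $R\ge R_1$ large, and close the proof identically. One small wording caveat: the quantity that appears in $F^c_R-F^c_\infty$ on $\{|x|>R\}$ for radial $u$ is $(4w_R''(r)-8)|u_r|^2$, which already equals $-8|u_r|^2$ on $\{|x|>2R\}$; these are the same term, not two separate ones.
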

\begin{proof}
Fix $R>1$ to be determined below. For $\delta_{1}\in (0, \delta_{0})$ sufficiently small, we use the localized virial identity (cf. Lemma~\ref{VirialModulate}) with $\chi(t)$ satisfying
\[
\chi(t)=
\begin{cases}
1& \quad \delta(t)<\delta_{1} \\
0& \quad \delta(t)\geq \delta_{1}.
\end{cases}
\]
We recall that $E^{c}(W)=\frac{1}{3}\|\nabla W\|^{2}_{L^{2}}$. By condition \eqref{BCon11} we  get
\begin{equation*}
{{F^{c}_{\infty}}}[u(t)]=-16\delta(t)-12\|u(t)\|^{4}_{L^{4}}.
\end{equation*}
Thus, by Lemma~\ref{VirialModulate} we  deduce
\begin{equation}\label{VirilaX}
\tfrac{d}{dt}I_{R}[u(t)]=F^{c}_{\infty}[u(t)]+\EE(t)=-16\delta(t)-12\|u(t)\|^{4}_{L^{4}}+\EE(t)
\end{equation}
with
\begin{equation}\label{Error11}
\EE(t)=
\begin{cases}
F_{R}[u(t)]-F^{c}_{\infty}[u(t)]& \quad  \text{if $\delta(t)\geq \delta_{1}$}, \\
F_{R}[u(t)]-F^{c}_{\infty}[u(t)]-\K[t]& \quad \text{if $\delta(t)< \delta_{1}$},
\end{cases}
\end{equation}
where
\begin{equation}\label{Error22}
\K(t):=F^{c}_{R}[e^{i\theta(t)}\lambda(t)^{\frac{1}{2}}W(\lambda(t)x)]-F^{c}_{\infty}[e^{i\theta(t)}\lambda(t)^{\frac{1}{2}}W(\lambda(t)x)]
\end{equation}
and $\lambda(t):=\mu(t)$ for $t\in I_{0}$. 
Now by \eqref{DecomU} we have ${u}_{[\theta(t), \lambda(t)]}=W+V$, where 
\[
{u}_{[\theta(t), \lambda(t)]}=e^{-i\theta(t)}\lambda(t)^{-\frac{1}{2}}u(t, \lambda(t)^{-1}\cdot)\qtq{and}\|V\|_{\dot{H}^{1}}\sim \delta(t).
\]

We now claim that
\begin{align}\label{InfP}
\lambda_{\text{inf}}:=\inf\left\{{\lambda(t):\ t\geq 0,\ \delta(t)\leq \delta_{1}}\right\}>0
\end{align}
for $\delta_{1}\in (0, \delta_{0})$ sufficiently small. Indeed, by the mass conservation we see that
\begin{align*}
	M({u}_{0})&\gtrsim \int_{|x|\leq \tfrac{1}{\lambda(t)}}|u(x,t)|^{2}dx
	=\tfrac{1}{\lambda(t)^{2}}\int_{|x|\leq 1}|u_{[\theta(t), \lambda(t)]}|^{2}dx\\
	&\gtrsim \tfrac{1}{\lambda(t)^{2}}\(\int_{|x|\leq 1}W^{2}dx-\int_{|x|\leq 1}|V|^{2}dx\).
\end{align*}
Since
\[
\|V(t)\|_{L^{2}(|x|\leq 1)}\lesssim \|V(t)\|_{L^{6}(|x|\leq 1)}  
\lesssim \|V(t)\|_{\dot{H}^{1}}\lesssim\delta(t),
\]
it follows that
\[
M({u}_{0})\gtrsim  \tfrac{1}{\lambda(t)^{2}}\(\int_{|x|\leq 1}W^{2}dx-C\delta^{2}(t)\).
\]
Choosing $\delta_{1}$ sufficiently small yields \eqref{InfP}.

Next we show that there exists $R_{\ast}$ so that for $R\geq R_{\ast}$,
\begin{align}
\label{EstimateE22}
|\EE(t)|\leq 2\delta(t)\quad &\text{ for $t\geq 0$ with $\delta(t)< \delta_{1}$}.
\end{align}
To simplify the notation, we set  $W(t)=e^{i\theta(t)}\lambda(t)^{\frac{1}{2}}W(\lambda(t)x)$. Then
\begin{align}\label{Decomp11}
\EE(t)&=-8\int_{|x|\geq R}[(|\nabla u|^{2}-(|\nabla W(t)|^{2})]\,dx
+8\int_{|x|\geq R}[|u|^{6}-|W(t)|^{6}]\,dx\\\label{Decomp33}
&\quad+\int_{R\leq |x| \leq 2R}(-\Delta \Delta w_{R})[|u|^{2}-|W(t)|^{2}]\,dx \\\label{Decomp44}
&\quad-\tfrac{4}{3}\int_{|x|\geq R}\Delta[w_{R}(x)](|u|^{6}-|W(t)|^{6})\,dx\\\label{Decomp55}
&\quad+4\RE\int_{|x|\geq R}[\overline{u_{j}} u_{k}-\overline{\partial_{j}W(t)} \partial_{k}W(t)]\partial_{jk}[w_{R}(x)]\,dx\\ \label{Decomp66}
&\quad+\int_{\R^{3}}\Delta[w_{R}(x)]|u|^{4}\,dx
\end{align}
for all $t\geq 0$ such that $\delta(t)<\delta_{1}$.

As $|\Delta \Delta w_{R}|\lesssim 1/|x|^{2}$, $|\partial_{jk}[w_{R}]|\lesssim 1$ and $|\Delta[w_{R}]|\lesssim 1$, we deduce that \eqref{Decomp11}--\eqref{Decomp55} can be estimated by terms of the form 
\[
\bigl\{\|u(t)\|_{\dot{H}_{x}^{1}(|x|\geq R)}+\|W(t)\|_{\dot{H}_{x}^{1}(|x|\geq R)} +
\|u(t)\|^{5}_{L_{x}^{6}(|x|\geq R)}+\|W(t)\|^{5}_{L_{x}^{6}(|x|\geq R)}
\bigr\}\|g(t)\|_{\dot{H}_{x}^{1}},
\]
where $ g(t)=e^{-i\theta(t)}[{u}(t)-W(t)]$. Moreover, since 
\[
\|W(t)\|_{\dot{H}_{x}^{1}(|x|\geq R)} \sim \tfrac{1}{R^{1/2}}\qtq{and}\|W(t)\|_{L_{x}^{6}(|x|\geq R)} \sim \tfrac{1}{R^{1/2}},
\] 
we deduce that
\begin{align*}
&\|u(t)\|_{\dot{H}_{x}^{1}(|x|\geq R)}+\|W(t)\|_{\dot{H}_{x}^{1}(|x|\geq R)} +
\|u(t)\|^{5}_{L_{x}^{6}(|x|\geq R)}+\|W(t)\|^{5}_{L_{x}^{6}(|x|\geq R)}\\
&\leq \delta(t)^{1/2}+\delta(t)^{5}+\tfrac{1}{(\lambda(t)R)^{1/2}}+\tfrac{1}{(\lambda(t)R)^{5/2}}.
\end{align*}
Thus, for $\delta_{1}\in (0, \delta_{0})$ sufficiently small and $R$ sufficiently large,
\[
|\eqref{Decomp11}|+|\eqref{Decomp33}|+|\eqref{Decomp55}|\leq \delta(t) \qtq{when $\delta(t)<\delta_{1}$.}
\]
By taking $\delta_{1}$ smaller if necessary, Proposition~\ref{Modilation11} implies
\[
\left|\int_{\R^{3}}\Delta[w_{R}(x)]|u|^{4}dx\right|\lesssim \delta(t)^{2}\lesssim \delta_{1} \delta(t)\leq \delta(t),
\]
Putting together the estimates above yields \eqref{EstimateE22}.

Now, suppose that $\delta(t)\geq \delta_{1}$. We show that for $R$ large, we have
\begin{align}\label{EstimateE11}
\EE(t)\leq \delta(t) +6\|u(t)\|^{4}_{L^{4}}.
\end{align}
First, we recall the radial Sobolev embedding estimate:
\[
\|f\|_{L^{\infty}(|x|\geq R)}\lesssim \tfrac{1}{R}\|f\|^{1/2}_{L^{2}}\|\nabla f\|^{1/2}_{L^{2}}.
\]
For $\delta(t)\geq \delta_1$, we may write
\begin{align}\label{FF11}
\EE(t)=&\int_{|x|\geq R}(- \Delta \Delta w_{R})|u|^{2}
+4\RE \overline{u_{j}} u_{k} \partial_{jk}[w_{R}]-8|\nabla u|^{2}dx\\ \label{FF22}
&-\tfrac{4}{3}\int_{|x|\geq R}\Delta[w_{R}(x)]|u|^{6}dx
+8\int_{|x|\geq R}|u|^{6}dx
\\ \label{FF33}
&+\int_{\R^{3}}\Delta[w_{R}(x)]|u|^{4}dx.
\end{align}
Choosing $w_{R}$  so that $4\partial^{2}_{r}w_{R}\leq 8$, we have
\[
\int_{|x|\geq R}(4\RE \overline{u_{j}} u_{k} \partial_{jk}[w_{R}]-8|\nabla u|^{2})dx
\leq 0.
\]
Moreover, since $|\Delta \Delta w_{R}|\lesssim 1/|x|^{2}$, by \eqref{FF11}--\eqref{FF33} and radial Sobolev, 
\[
\EE(t)\leq \tfrac{C}{R^{2}}+\tfrac{C}{R^{4}}\|u(t)\|^{4}_{\dot{H}^{1}}
+\int_{\R^{3}}\Delta[w_{R}(x)]|u|^{4}dx.
\]
Furthermore, 
\begin{align*}
\int_{\R^{3}}\Delta[w_{R}(x)]|u|^{4}dx&=6\int_{|x|\leq R}|u|^{4}dx+\int_{|x|\geq R}\Delta[w_{R}(x)]|u|^{4}dx\\
&\leq 6\int_{\R^{3}}|u|^{4}dx+\tfrac{C}{R^{2}}\|u(t)\|_{\dot{H}^{1}}.
\end{align*}
 Combining estimates above we deduce (in the case $\delta(t)\geq \delta_{1}$)
\begin{align*}
	\EE(t)&\leq  6\int_{\R^{3}}|u|^{4}dx+ \tfrac{C}{R^{2}}+\tfrac{C}{R^{4}}\|u(t)\|^{4}_{\dot{H}^{1}}
+\tfrac{C}{R^{2}}\|u(t)\|_{\dot{H}^{1}}\\
&\leq 6\int_{\R^{3}}|u|^{4}dx+\tfrac{C(\delta_{1}, \|W\|_{\dot{H}^{1}})}{R^{2}}\delta(t).
\end{align*}
Thus \eqref{EstimateE11} holds for $R$ large.  Combining \eqref{EstimateE22} and \eqref{EstimateE11} now yields \eqref{BNI}.\end{proof}

\begin{lemma}\label{NeD}
Given ${u}$ as in Lemma~\ref{LemmaB22} , there exists $C>1$ and $c>0$ such that
\begin{align}\label{DoQ11}
\int^{\infty}_{t}\delta(s)\,ds \leq Ce^{-ct}\qtq{for all $t\geq 0$.}
\end{align}
\end{lemma}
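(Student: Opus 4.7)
The plan is to set $F(t) := I_R[u(t)]$ for $R = R_1$ from Lemma \ref{LemmaB22}, and to combine the dissipation $F'(t) \leq -14\delta(t)$ with two a priori bounds on $|F(t)|$: the $\dot H^1$-based bound (b)~$|F(t)| \leq C_1 R^2 \delta(t)$ from Lemma \ref{Lemma11}, and the mass-based bound (c)~$|F(t)| \leq C_2 R \sqrt{M(u_0)}\,\|\nabla u(t)\|_{L^2}$ obtained directly from $\|\nabla w_R\|_{L^\infty} \lesssim R$ by Cauchy--Schwarz. Squaring (c) and using $\|\nabla u(t)\|_{L^2}^2 = \delta(t) + \|\nabla W\|_{L^2}^2$ (valid since $\|\nabla u(t)\|_{L^2}>\|\nabla W\|_{L^2}$ by Lemma \ref{GlobalS}) gives $F(t)^2 \leq C_2^2 R^2 M(u_0)\bigl(\delta(t) + \|\nabla W\|_{L^2}^2\bigr)$. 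Note also that $F$ is strictly decreasing, since $\delta(t) > 0$ throughout the lifespan.

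The first step is to rule out the possibility that $F(t_0) < 0$ for some $t_0 \geq 0$. If such a $t_0$ existed, then $F(t) \leq F(t_0) < 0$ for all $t \geq t_0$ by monotonicity. Bound (b) rewrites the dissipation as $F'(t) \leq (14/C_1R^2)F(t)$, and Gr\"onwall then forces $|F(t)| \geq |F(t_0)|\,e^{c_1(t-t_0)}$ with $c_1 = 14/(C_1R^2)$; in particular there exists $t_1 \geq t_0$ with $|F(t_1)|^2 \geq 2 C_2^2 R^2 M(u_0)\,\|\nabla W\|_{L^2}^2$. For $t \geq t_1$, bound (c) then yields $\delta(t) \geq F(t)^2/\bigl(2 C_2^2 R^2 M(u_0)\bigr)$, which combined with the dissipation produces the Riccati inequality $-F'(t) \geq c_2 F(t)^2$ with $c_2 := 7/(C_2^2 R^2 M(u_0))>0$. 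Setting $y(t) := -F(t) > 0$, the inequality $y'(t) \geq c_2 y(t)^2$ blows up by time $t_1 + 1/(c_2\, y(t_1))$; by (c), this forces $\|\nabla u(t)\|_{L^2} \to \infty$ in finite time, contradicting the forward-global assumption via the blowup criterion in Proposition \ref{LWP}. Hence $F(t) \geq 0$ for every $t \geq 0$.

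With $F \geq 0$ established, (b) turns the dissipation into the linear Gr\"onwall inequality $F'(t) \leq -c_1 F(t)$, giving $F(t) \leq F(0)\,e^{-c_1 t}$. Since $F$ is nonincreasing and nonnegative, the limit $L := \lim_{T\to\infty} F(T) \geq 0$ exists, and integrating $F'(s) \leq -14\delta(s)$ from $t$ to $\infty$ yields
\[
\int_{t}^{\infty} \delta(s)\, ds \;\leq\; \tfrac{1}{14}\bigl(F(t) - L\bigr) \;\leq\; \tfrac{F(0)}{14}\, e^{-c_1 t},
\]
which is \eqref{DoQ11} with $C = F(0)/14$ and $c = c_1$.

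I expect the main obstacle to be the first step above: bound (b) alone only gives exponential---not finite-time---divergence of $|F|$ in the regime $F<0$, so one genuinely needs mass conservation via (c) to upgrade to the Riccati-type inequality that produces finite-time blowup. This matches the paper's remark that the conservation of mass plays an essential role in the blowup argument (and explains why the argument survives even if the cubic coefficient is taken to be zero, reproducing the $H^1$ threshold blowup for \eqref{ECgNLS}).
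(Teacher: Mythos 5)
Your proof is correct, but it takes a noticeably different route from the paper for the key step, namely establishing $I_R[u(t)] \geq 0$. The paper introduces the second antiderivative $V_R(t) = \int_{\R^3} w_R(x)|u(t,x)|^2\,dx$, which is well-defined and strictly positive because $u(t)\in L^2$, and observes $\tfrac{d^2}{dt^2}V_R = \tfrac{d}{dt}I_R[u] \leq -14\delta(t) < 0$. A strictly concave function that is positive on all of $[0,\infty)$ must have positive derivative (otherwise it would tend to $-\infty$ linearly), so $I_R[u(t)] = V_R'(t) > 0$ at once. You instead work directly with $F = I_R[u]$, rule out $F(t_0) < 0$ by exponentially pushing $|F|$ large via bound (b), upgrading to a Riccati inequality $y' \geq c_2 y^2$ through the mass-based Cauchy--Schwarz estimate (c) and $\|\nabla u\|_{L^2}^2 = \delta + \|\nabla W\|_{L^2}^2$, and contradicting forward-globalness. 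Both arguments ultimately lean on $u(t)\in L^2$ with a uniform-in-time bound (mass conservation): in the paper this enters through the finiteness and positivity of $V_R$; in yours it enters through the Cauchy--Schwarz bound (c). The paper's version buys brevity and avoids the auxiliary Riccati/ODE-blowup machinery; your version has the modest advantage of making the role of mass explicit and quantitative (consistent with the authors' remark that mass conservation is essential in the blowup argument), at the cost of more moving parts. Once nonnegativity of $I_R$ is in hand, the remainder of the two proofs (integrate $F' \leq -14\delta$, bound the boundary term by (b), Gr\"onwall) coincide.
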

\begin{proof}
Fix $R\geq R_{1}$ (where $R_1$ is as in Lemma~\ref{LemmaB22}). Writing
\begin{align}\label{VrV}
	V_{R}(t)=\int_{\R^{3}}w_{R}(x)|u(t,x)|^{2}\,dx,
\end{align}
we have that $\tfrac{d}{dt}V_{R}(t)=I_{R}[{u}]$. By Lemma~\ref{LemmaB22}, we have 
\[
\tfrac{d^{2}}{dt^{2}}V_{R}(t)=\tfrac{d}{dt}I_{R}[u(t)]\leq -14\delta(t).
\] 
Thus, since $\tfrac{d^{2}}{dt^{2}}V_{R}(t)<0$ and $V_{R}(t)>0$ for all $t\geq0$, it follows that  $I_{R}[u(t)]=\tfrac{d}{dt}V_{R}(t)>0$ for all $t\geq 0$. Thus \eqref{TBu} implies
\[
14\int^{T}_{t}\delta(s)\,ds\leq -\int^{T}_{t}\tfrac{d}{ds}I_{R}[u(s)]\,ds
=I_{R}[u(t)]-I_{R}[u(T)]\leq I_{R}[u(t)]\leq CR^{2}\delta(t).
\]
Sending $T\to \infty$, \eqref{DoQ11} now follows from Gronwall's inequality.
 \end{proof}

As an immediate consequence of Lemma~\ref{NeD}, we have the following result.

\begin{corollary}\label{delace} Under the assumptions of Lemma~\ref{Lemma11}, there exists an increasing sequence $t_{n}\to \infty$ such that $\delta(t_{n})\to 0$.
\end{corollary}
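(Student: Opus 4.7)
The plan is to extract the desired sequence directly from the exponential tail bound established in Lemma~\ref{NeD}. Taking $t=0$ in \eqref{DoQ11} gives
\[
\int_{0}^{\infty}\delta(s)\,ds \leq C,
\]
so $\delta\in L^{1}([0,\infty))$. Since $u\in C([0,\infty),H^{1})$ by Proposition~\ref{LWP}, the map $t\mapsto \delta(t)=\bigl|\|\nabla W\|_{L^{2}}^{2}-\|\nabla u(t)\|_{L^{2}}^{2}\bigr|$ is continuous.

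Next I would argue by contradiction: suppose there is no sequence $t_{n}\to\infty$ with $\delta(t_{n})\to 0$. Then $\liminf_{t\to\infty}\delta(t)>0$, so there exist $\epsilon_{0}>0$ and $T>0$ with $\delta(t)\geq\epsilon_{0}$ for all $t\geq T$. This forces
\[
\int_{T}^{\infty}\delta(s)\,ds=\infty,
\]
contradicting \eqref{DoQ11} applied at $t=T$. Hence $\liminf_{t\to\infty}\delta(t)=0$, which yields a sequence $t_{n}\to\infty$ along which $\delta(t_{n})\to 0$, and extracting a subsequence makes $\{t_{n}\}$ increasing.

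There is no real obstacle here, which is exactly why the authors label this an immediate consequence of Lemma~\ref{NeD}: the $L^{1}$-in-time control from the virial/Gronwall argument already encodes the fact that $\delta(t)$ must approach zero along some sequence at infinity.
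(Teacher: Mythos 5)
Your argument is correct and is exactly what the authors mean by calling this an ``immediate consequence'' of Lemma~\ref{NeD}: the exponential tail bound \eqref{DoQ11} gives $\delta\in L^{1}([0,\infty))$, and since $\delta\geq0$ this forces $\liminf_{t\to\infty}\delta(t)=0$, from which the sequence is extracted. The continuity remark is a harmless extra; the $L^1$ bound plus nonnegativity already suffices.
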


\begin{proof}[{Proof of Theorem~\ref{Th2}}~(ii)]
Let $u$ be a solution to \eqref{NLS} as in the statement of Theorem~\ref{Th2}~(ii). Suppose that $u$ is forward global.  By Corollary~\ref{delace}, there exists increasing $t_{n}\to \infty$ such that $\delta(t_{n})\to 0.$  Moreover, by Proposition~\ref{Modilation11} we deduce that (with $\lambda(t)=\mu(t)$ for $t\in I_{0}$)
\begin{align}\label{CWQ}
	e^{-i\theta(t_{n})}\lambda(t_{n})^{-\frac{1}{2}}u(t_{n}, \lambda(t_{n})^{-1}x)
\to W(x) \qtq{in $\dot{H}^{1}$.}
\end{align}
Passing to a subsequence, we may assume that
\[
\lim_{n\to +\infty}\lambda(t_{n})=\lambda_{\ast} \in [0, \infty].
\]
We show that $\lambda_{\ast}<\infty$.  Suppose instead that $\lambda_{\ast}=\infty$. From \eqref{CWQ} and a change of variables we get that for any $C>0$,
\[
\|u(t_{n})\|_{{L}^{6}(|x|\geq C)}\to 0\qtq{as $n\to +\infty$.}
\]
Fix $R\geq R_{1}$ (cf. Lemma~\ref{LemmaB22}) and $\epsilon>0$ and recall \eqref{VrV}. By Hardy and H\"older, 
\begin{align*}
V_{R}(t_{n})&=\int_{|x|\leq \epsilon}w_{R}(x)|u(t_n,x)|^{2}dx+\int_{\epsilon \leq |x|\leq 2R}w_{R}(x)|u(t_n,x)|^{2}dx\\
&\lesssim \epsilon^4 \|u(t_n)\|_{\dot H^1}^2 + R^4\|u(t_n)\|_{L^6(|x|\geq \epsilon)}^2.
\end{align*}
Sending $n\to \infty$ and then  $\epsilon \to 0$, we find that
\[
\lim_{n\to \infty}V_{R}(t_{n})=0.
\]
However, as $\tfrac{d}{dt}V_{R}>0$ for all $t\geq 0$ (see the proof of Lemma~\ref{NeD}), it follows that $V_{R}(t)<0$ for $t\geq 0$, a contradiction.  Thus $\lambda_{\ast}<\infty$. 

As $\delta(t_{n})\to 0$ as $n\to \infty$, there exists $N\in\N$ so that $\delta(t_{n})<\delta_{0}$ for $n\geq N$. Now observe that Proposition~\ref{Modilation11} yields
\[
\frac{1}{1+\lambda(t_{n})}\leq C\delta(t_{n}) \qtq{for all $n\geq N$.}
\]
Taking $n\to \infty$, we obtain a contradiction, so that $u$ blows up forward in time.  Using time-reversal symmetry, we can also obtain that $u$ blows up backward in time.\end{proof}


\section{Compactness for nonscattering solutions}\label{S:Compactness}

We next show that the failure of Theorem~\ref{Th2}~(i) implies the existence of a nonscattering solution $u$ to \eqref{NLS} satisfying \eqref{Thres} such that the orbit of $u$ is precompact in $\dot{H}^{1}(\R^{3})$ modulo scaling.

\begin{proposition}\label{Compacness11}
Suppose Theorem~\ref{Th2}~(i) fails. Then there exists a solution ${u}: [0, T^{\ast})\times \R^{3}\to \C$ of \eqref{NLS} with
\begin{align}\label{EGN}
	&E(u_{0})=E^{c}(W) \quad \text{and} \quad \|\nabla u_{0}\|_{L^{2}}<\|\nabla W\|_{L^{2}}\\
	\label{SN}
	&\|{u}\|_{L^{10}_{t, x}([0, T^{\ast})\times \R^{3})}=\infty.
\end{align}
Moreover, there exists $\lambda:[0, T^{\ast}) \to (0, \infty)$ such that
 \begin{equation}\label{CompactX}
\left\{\lambda(t)^{-\frac{1}{2}}u(t, \tfrac{x}{\lambda(t)}): t\in [0, T^{\ast})\right\} \quad
\text{is pre-compact in $\dot{H}^{1}(\R^{3})$}.
\end{equation}
\end{proposition}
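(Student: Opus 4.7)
My strategy adapts the concentration-compactness extraction of Kenig--Merle to the energy-threshold, radial $H^1$ setting. Starting from a non-scattering solution $u$ satisfying \eqref{Thres}, I want to show that for any sequence $t_n \in [0, T^*)$, a subsequence admits scales $\lambda_n > 0$ with $\lambda_n^{-1/2}u(t_n, x/\lambda_n)$ convergent in $\dot{H}^1$; the full function $\lambda(\cdot)$ then follows by a measurable selection from this pre-compact family. Mass conservation together with Lemma~\ref{GlobalS} makes $\{u(t_n)\}$ bounded in $H^1$, so Theorem~\ref{Profi} yields the linear profile decomposition $u(t_n) = \sum_{j\leq J}\phi_n^j + R_n^J$. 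The coercivity \eqref{IneW}, applicable because each $\phi_n^j$ and $R_n^J$ inherits the sub-threshold gradient bound from $u(t_n)$, gives $E(\phi_n^j), E(R_n^J) \geq 0$ for $n$ large, so Lemma~\ref{Lde} becomes the tight identity
\[
\sum_j E(\phi_n^j) + E(R_n^J) = E^c(W) + o_n(1).
\]

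I would then eliminate every scenario incompatible with compactness. First, if every profile has strictly sub-threshold energy in the sense $\limsup_n E(\phi_n^j) < E^c(W)$ for all $j$, then each nonlinear profile scatters: profiles with $\lambda_n^j \equiv 1$ by Theorem~\ref{Th1} applied to their $H^1$-data, and profiles with $\lambda_n^j \to \infty$ by the embedding Lemma~\ref{P:embedding}, whose hypotheses \eqref{PriC}--\eqref{seC} follow from the strict sub-threshold energy together with $\|\nabla \phi^j\|_{L^2} \leq \|\nabla W\|_{L^2}$. The standard superposition of nonlinear profiles combined with the stability Lemma~\ref{stabi} then yields $\|u\|_{L^{10}_{t,x}([t_n,T^*))} < \infty$, which together with the local bound on $[0,t_n]$ contradicts \eqref{SN}. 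Hence a single profile $\phi^1$ must satisfy $E(\phi_n^1) \to E^c(W)$, and the decoupling forces all other profiles and $R_n^J$ to vanish in $\dot{H}^1$. Next I would rule out the time shifts $t_n^1 \to \pm\infty$ via a stability comparison between $u(t_n+\cdot)$ and the shifted nonlinear profile: the dispersive smallness of $e^{it\Delta}\phi^1$ in the future (when $t_n^1 \to +\infty$) and the wave-operator construction (when $t_n^1 \to -\infty$) each produce a $L^{10}_{t,x}$-bounded approximation to $u(t_n+\cdot)$ via Lemma~\ref{stabi}, again contradicting \eqref{SN}. The only surviving case is $t_n^1 \equiv 0$, in which I set $\lambda(t_n) := \lambda_n^1$ (either $\equiv 1$ or $\to \infty$) and read off $\lambda(t_n)^{-1/2}u(t_n, x/\lambda(t_n)) \to \phi^1$ in $\dot{H}^1$ up to a phase, yielding pre-compactness of the orbit modulo scaling.

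The principal technical obstacle will be the case $t_n^1 \to -\infty$, since the backward wave-operator nonlinear profile $v^1$ inherits the threshold energy $E(v^1) = \tfrac{1}{2}\|\nabla \phi^1\|_{L^2}^2 = E^c(W)$ and thus falls outside the sub-threshold Theorem~\ref{Th1}. However, the identity $\|\nabla \phi^1\|_{L^2}^2 = \tfrac{2}{3}\|\nabla W\|_{L^2}^2$ derived from the threshold constraint on $E(\phi_n^1)$ keeps $v^1$ sub-threshold in gradient, so that when comparing $u(t_n+\cdot)$ to $v^1(t_n^1+\cdot)$ over $[0, T^*-t_n)$, the relevant $L^{10}_{t,x}$ bound on $v^1$ over intervals anchored deep in the past is controlled by a small-data Strichartz estimate (for $\lambda_n^1 \equiv 1$) or by Lemma~\ref{P:embedding} applied after the truncation built into \eqref{fucti} (for $\lambda_n^1 \to \infty$, after verifying the embedding hypothesis with strict inequality). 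The careful bookkeeping of this comparison, together with uniform control of the superposition error as $J \to \infty$, is the technical crux.
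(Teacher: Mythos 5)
Your outline correctly reproduces the paper's strategy: apply Theorem~\ref{Profi} to $u(\tau_n)$ with $\tau_n\to T^\ast$, use \eqref{IneW} to show each split energy is nonnegative so that Lemma~\ref{Lde} becomes a tight identity, rule out more than one profile by building nonlinear profiles (from Theorem~\ref{Th1} and Lemma~\ref{P:embedding}) and applying the stability Lemma~\ref{stabi}, rule out the time shifts, and extract $\lambda(t)$.

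The gap is in the case $t_n^1\to-\infty$. You correctly observe that the backward-scattering nonlinear profile $v^1$ has $E(v^1)=\tfrac12\|\nabla\phi^1\|_{L^2}^2=E^c(W)$, so it sits exactly at the threshold; Theorem~\ref{Th1} (strict inequality) does not apply, and a forward $L^{10}_{t,x}$ bound on $v^1$ would amount to assuming Theorem~\ref{Th2}(i), which is circular. Your proposed workaround does not close this: you compare $u(\tau_n+\cdot)$ with $v^1(t_n^1+\cdot)$ over the forward window $[0,T^\ast-\tau_n)$ and claim the relevant intervals for $v^1$ are ``anchored deep in the past.'' That is only true if $T^\ast-\tau_n$ stays bounded; when $T^\ast=\infty$ the comparison window is $[t_n^1,\infty)$, which extends arbitrarily far to the right, where the small-data Strichartz estimate near $t=-\infty$ controls nothing. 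The correct step --- what the paper's ``without loss of generality'' sentence elides --- avoids the nonlinear profile entirely and uses the \emph{free} evolution as the approximate solution, exactly as in the $t_n^1\to+\infty$ case but run backward in time. One has
\[
\|e^{it\Delta}\phi_n\|_{L^{10}_{t,x}((-\infty,0]\times\R^3)}=\|e^{it\Delta}\phi^1\|_{L^{10}_{t,x}((-\infty,t_n^1]\times\R^3)}\to 0
\]
by monotone convergence, so Lemma~\ref{stabi} applied on $[-\tau_n,0]$ with $\tilde u=e^{it\Delta}\phi_n$ and data $u(\tau_n)$ at $t=0$ yields $\|u\|_{L^{10}_{t,x}([0,\tau_n]\times\R^3)}$ bounded uniformly in $n$; letting $\tau_n\to T^\ast$ contradicts \eqref{SN}. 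No wave-operator construction, and hence no circularity, is required at this step. (A minor remark: your concern about ``uniform control of the superposition error as $J\to\infty$'' belongs to the $J^\ast\geq 2$ rule-out, not to the single-profile time-shift analysis; by the time you reach the $t_n^1$ discussion the remainder has already vanished in $\dot H^1$.)
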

\begin{proof}
As the proof follows that of \cite[Proposition 9.1]{KillipOhPoVi2017}, we will only sketch the main steps:  If Theorem~\ref{Th2}~(i) fails, then  there exists a radial solution ${u}: [0, T^{\ast})\times \R^{3}\to \C$ 
obeying \eqref{EGN} and \eqref{SN}. In particular, by Lemma~\ref{GlobalS},
\begin{align}\label{boundGC}
 &\|\nabla u(t)\|_{L^{2}}<\|\nabla W\|_{L^{2}}\qtq{for all $t\in [0, T^{\ast})$.}
\end{align}
\textbf{Step 1.} For any sequence $\left\{\tau_{n}\right\}_{n\in \N}\subset [0, T^{\ast})$, there exists $\lambda_{n}$ such that 
\begin{align}\label{LamC}
\lambda_{n}^{-\frac{1}{2}}u\(\tau_{n}, \tfrac{x}{\lambda_{n}}\)
\qtq{converges strongly in $\dot{H}^{1}$ (up to a subsequence).}	
\end{align}
By continuity of $u$ it suffices to consider $\tau_{n}\to T^{\ast}$. Using \eqref{boundGC}, we apply the profile decomposition (Theorem~\ref{Profi}) to $\left\{u(\tau_{n})\right\}_{n\in \N}$ and write
\begin{equation}\label{Dpe}
u_{n}:=u(\tau_{n})=\sum^{J}_{j=1}\phi^{j}_{n}+R_n^J
\end{equation}
with $J\leq J^{\ast}$ and all of the properties stated in Theorem~\ref{Profi} and Lemma~\ref{Lde}. 

We will show that $J^{\ast}=1$ by showing that all other possibilities contradict \eqref{SN}.

If $J^{\ast}=0$, then by \eqref{Sr} we have that
\[
\| e^{it\Delta}u(\tau_{n})  \|_{L^{10}_{t,x}(\left\{t>0\right\}\times \R^{3})}
\to 0 \qtq{as $n\to \infty$.}
\]
Applying Lemma~\ref{stabi} (with $\widetilde{u}=e^{it\Delta}u(\tau_{n}) $), we obtain that for $n$ large,
\[\|
u(t+\tau_{n})\|_{L^{10}_{t,x}(\left\{t>0\right\}\times \R^{3})}
=\|u  \|_{L^{10}_{t,x}(\left\{t>\tau_{n}\right\}\times \R^{3})}\lesssim 1,
\] 
contradicting \eqref{SN}.

Next suppose $J^{\ast}\geq 2$. By Theorem~\ref{Profi}, Lemma~\ref{Lde} and \eqref{boundGC} we have 
\begin{align}\label{MassC}
&\lim_{n \to \infty} \big(\sum_{j=1}^{J} M(\phi_n^j) + M(R_n^J)\big) = \lim_{n \to \infty} M(u_{n}) =M(u_0),\\
\label{DECE}
&\lim_{n \to \infty} \big(\sum_{j=1}^{J} E(\phi_n^j) + E(R_n^J)\big) = \lim_{n \to \infty} E(u_{n})=E^{c}(W),\\
\label{DEG}
&\limsup_{n \to \infty} \big(\sum_{j=1}^{J} \| \phi_n^j\|_{\dot{H}^{1}}^2 + \| R_n^J\|_{\dot{H}^{1}}^2\big)
	= \limsup_{n \to \infty} \|u(\tau_n)\|_{\dot{H}^{1}}^2 \leq \|W\|_{\dot{H}^{1}}^2
\end{align}
for any $0\leq J \leq J^{\ast}$. By \eqref{IneW} and \eqref{DEG} we deduce that for $n$ large,
\begin{align}\label{BounWd}
 \| \phi_n^j\|_{\dot{H}^{1}}^2\leq \frac{\|W\|_{\dot{H}^{1}}^2}{E^{c}(W)} E(\phi_n^j)
\qtq{and}
 \| R_n^J\|_{\dot{H}^{1}}^2\leq \frac{\|W\|_{\dot{H}^{1}}^2}{E^{c}(W)} E(R_n^J).
\end{align}
In particular, $\liminf_{n\to \infty}E(\phi_n^j)>0$. Thus there exists  $\delta>0$ so that
\begin{align}\label{MEC}
	E(\phi_n^j)&\leq E^{c}(W)-\delta,
\end{align}
for sufficiently large $n$ and $1\leq j\leq J$, so that each $ \phi_n^j$ satisfies \eqref{Thres00} in Theorem~\ref{Th1}. 

We use the $\phi_n^j$ to build approximate solutions to \eqref{NLS} in the following three cases: 
\begin{itemize}
\item $\lambda^{j}_{n}\equiv 1$ and $t^{j}_{n}\equiv 0$; 
\item $\lambda^{j}_{n}\equiv 1$ and $t^{j}_{n}\to \pm \infty$; 
\item and $\lambda^{j}_{n}\to +\infty$.
\end{itemize} 
For $j$ such that $\lambda^{j}_{n}\equiv 1$ and $t^{j}_{n}\equiv 0$, \eqref{MEC} implies that we may apply Theorem~\ref{Th1}~(i) to construct a global solution $v^{j}$ with initial data $v^{j}(0)=\phi^{j}$ obeying global space-time bounds. For $j$ such that $\lambda^{j}_{n}\equiv 1$ and $t^{j}_{n}\to \pm \infty$, we take the solution $v^{j}$ to \eqref{NLS} which scatters to $e^{it\Delta}\phi^{j}$ in $H^{1}$ as  $t\to \pm \infty$. By \eqref{MEC} and Theorem~\ref{Th1}~(i) we have that the solution $v^{j}$ is global and satisfies uniform space-time bounds. In either case, we set
\[
v^{j}_{n}(t,x)=v^{j}(t+t^{j}_{n}, x)
\]
with
\[
\| v^{j}_{n}  \|_{L^{10}_{t,x}(\R\times\R^{3})}\lesssim_{\delta}1.
\]
By persistence of regularity (Remark~\ref{Rpr}) and \eqref{BounWd} we get
\begin{equation}\label{Estrii}
\| v^{j}_{n}  \|_{L^{10}_{t,x}(\R\times\R^{3})}+\|\nabla  v^{j}_{n}\|_{L^{\frac{10}{3}}_{t,x}(\R\times\R^{3})}\lesssim_{\delta} E(v^{j}_{n})^{\frac{1}{2}},
\end{equation}
\begin{equation}\label{masses}
\|  v^{j}_{n}\|_{L^{\frac{10}{3}}_{t,x}(\R\times\R^{3})}\lesssim_{\delta} M(v^{j}_{n})^{\frac{1}{2}}.
\end{equation}
Finally, for the case $\lambda^{j}_{n}\rightarrow +\infty$ as $n\rightarrow\infty$, we define $v_{n}^{j}$ to be the global solution to equation \eqref{NLS} with data $v_{n}^{j}(0)=\phi_{n}^{j}$ given by Lemma~\ref{P:embedding}, which satisfies
\begin{align}\label{Lamdace}
\|  v^{j}_{n}\|_{L^{\frac{10}{3}}_{t,x}(\R\times\R^{3})}+	\| v^{j}_{n}  \|_{L^{10}_{t,x}(\R\times\R^{3})}
\lesssim_{\delta} E(v^{j}_{n})^{\frac{1}{2}}, \quad
\|\nabla  v^{j}_{n}\|_{L^{\frac{10}{3}}_{t,x}(\R\times\R^{3})}
\lesssim M(v^{j}_{n})^{\frac{1}{2}}.
\end{align}

In addition, by \eqref{DECE} and \eqref{MassC}  we obtain for each finite $J\leq J^{\ast}$,
\begin{align}\label{PTGq}
&\limsup_{n\to+\infty}\sum^{J}_{j=1}E(v^{j}_{n} )+E(R_n^J)\leq E^{c}(W),\\ \label{PTGq22}
&\limsup_{n\to+\infty}\sum^{J}_{j=1}M(v^{j}_{n} )+M(R_n^J)\leq M(u_{0}).
\end{align}

In either case,
\begin{equation}\label{Apro11}
\|v_{n}^{j}(0)- \phi_{n}^{j}\|_{H^{1}_{x}}\rightarrow0 \quad \text{as $n\rightarrow\infty$}.
\end{equation}

Next, we define approximate solutions of \eqref{NLS} by
\[u^{J}_{n}(t,x)=\sum^{J}_{j=1}v^{j}_{n}(t,x)+e^{it\Delta}R_n^J,\]
and we will use Lemma \ref{stabi} to contradict \eqref{SN}.

First notice that by construction (see \eqref{Dpe} and \eqref{Apro11}) we get
\begin{equation}\label{Aps}
\|u^{J}_{n}(0)-u_{n}\|_{H^{1}_{x}}\rightarrow0,\quad \text{as $n\rightarrow\infty$}.
\end{equation}

Moreover, with estimates \eqref{Small11}, \eqref{Small22}, \eqref{BounWd}, \eqref{Estrii}, \eqref{masses} and \eqref{Lamdace} in hand, the arguments of \cite[Lemmas 9.4 and 9.5]{KillipOhPoVi2017} imply:

\begin{enumerate}[label=\rm{(\roman*)}]
\item We have the following global space bound (cf. \cite[Lemma 9.4]{KillipOhPoVi2017}):
\begin{equation}\label{Gstb}
\sup_{J}\limsup_{n\rightarrow\infty}\left[ \| {u}^{J}_{n}  \|_{L^{10}_{t,x}(\R\times \R^{3})} 
+\| {u}^{J}_{n}  \|_{L_{t}^{\frac{10}{3}}H^{1,\frac{10}{3}}_{x}(\R\times \R^{3})} +
\| u^{J}_{n}  \|_{L_{t}^{\infty}H^{1}_{x}(\R\times\R^{3})}
\right]\lesssim 1.
\end{equation}

\item Let $\epsilon>0$. For large $J$ we have (cf. \cite[Lemma 9.5]{KillipOhPoVi2017})
\begin{equation}\label{Eimpo}
\limsup_{n\rightarrow\infty}\|\nabla  e^{J}_{n}  \|_{L^{\frac{10}{7}}_{x}(\R\times \R^{3})}\leq\epsilon,
\end{equation}
\[
e^{J}_{n}:=(i\partial_{t}+\Delta) {u}^{J}_{n}+|{u}^{J}_{n}|^{4}{u}^{J}_{n}-|{u}^{J}_{n}|^{2}{u}^{J}_{n}.
\]
\end{enumerate}

Combining \eqref{Aps}, \eqref{Gstb} and \eqref{Eimpo}, Lemma~\ref{stabi} implies that $u\in L^{10}_{t, x}(\R\times\R^3)$, contradicting \eqref{SN}. 

Having established $J^*=1$, \eqref{Dpe} simplifies to
\begin{align}\label{1Dec}
	u_{n}=u(\tau_{n})=\phi_{n}+W^{1}_{n}.
\end{align}
We now observe that 
\begin{align}\label{ZerI}
	\|\nabla W^{1}_{n}\|^{2}_{L^{2}}\to 0 \qtq{as $n\to +\infty$.}
\end{align}
 Indeed, otherwise, by \eqref{BounWd} we see that there exists $c>0$ so that  $E( W^{1}_{n})\geq c$. Thus, $\phi_{n}$ obeys the sub-threshold condition \eqref{MEC}, and the same arguments used above show $u\in L^{10}_{t, x}(\R\times \R^{3})$, contradicting \eqref{SN}. 

Now we preclude the possibility that $t_{n}\rightarrow \pm\infty$ as $n\rightarrow\infty$. Without loss of generality, suppose $t_{n}\rightarrow \infty$.  If $\lambda_{n}\equiv1$, then by monotone convergence we have
\[
\| e^{it\Delta}\phi_{n}  \|_{L^{10}_{t,x}([0,\infty)\times \R^{3})}= \|e^{it\Delta} \phi\|_{L^{10}_{t,x}([t_{n},\infty)\times \R^{3})}\rightarrow0
\]
as $n\rightarrow\infty$. On the other hand, if $\lambda_{n}\to +\infty$ a change of variables, \eqref{Ap11}, and  monotone convergence yield
\[
\| e^{it\Delta}\phi_{n}  \|_{L^{10}_{t,x}([0,\infty)\times \R^{3})}=\|e^{it\Delta} P_{\geq (\lambda_{n})^{-\theta}}\phi\|_{L^{10}_{t,x}([t_{n},\infty)\times \R^{3})}\rightarrow0,
\]
as $n\rightarrow\infty$. In either case, we may apply Lemma~\ref{stabi} with $\tilde{u}:=e^{it\Delta}\phi_{n}$ to obtain that $u\in L^{10}_{t,x}([0,\infty)\times \R^{3})$, contradicting \eqref{SN}.  Thus $t_n\equiv 0$.

Finally, by \eqref{1Dec} and \eqref{ZerI} we get
\[
\|\lambda^{-\frac{1}{2}}_{n}u(\tau_{n}, \tfrac{x}{\lambda_{n}})-\phi\|^{2}_{\dot{H}^{1}}\to 0 \qtq{as $n\to \infty$.}
\]
Note that in the case $\lambda_{n}\to +\infty$, we have used the fact that 
$\|P_{\geq (\lambda_{n})^{-\theta}}\phi-\phi\|_{\dot{H}^{1}}\to 0$ as $n\to \infty$. This complete the proof of \eqref{LamC}.

\textbf{Step 2.} We define $\lambda(t)$. By Gagliardo--Nirenberg we deduce that there exists $\widetilde{C}>0$ so that
\[
2E^{c}(W)=2E(u(t))\leq \widetilde{C}\| u(t)\|^{2}_{\dot{H}^{1}}\leq \widetilde{C}\| W\|^{2}_{\dot{H}^{1}}
\]
for $t$ in the lifespan of $u$. Thus we may define a function $\lambda:[0, T^{\ast})\to \R^{+}$ such that
\begin{align}\label{DefiLa}
	\|u(t)\|^{2}_{\dot{H}^{1}(|x|\leq \frac{1}{\lambda(t)})}=
	\sup_{\lambda>0}	\|u(t)\|^{2}_{\dot{H}^{1}(|x|\leq \frac{1}{\lambda})}=\tfrac{1}{\widetilde{C}}E^{c}(W).
\end{align}
Now we verify \eqref{CompactX}. Choose an arbitrary sequence $t_{n}$. By \eqref{LamC} there exist a nonzero $\psi\in \dot{H}^{1}$ and $\lambda_{n}\geq1$
such that
\begin{align}\label{LamC11}
\lambda_{n}^{-\frac{1}{2}}u\(t_{n}, \tfrac{x}{\lambda_{n}}\)\to \psi
\qtq{in $\dot{H}^{1}$.}	
\end{align}
In particular, up to subsequence, we have
\begin{align}\label{FIC}
	\lambda(t_{n})^{-\frac{1}{2}}u\(t_{n}, \tfrac{x}{\lambda(t_{n})}\)=
	\(\tfrac{\lambda_{n}}{\lambda(t_{n})}\)^{\frac{1}{2}}\psi\( \tfrac{\lambda_{n}}{\lambda(t_{n})}x\)+o(1)
	\qtq{in $\dot{H}^{1}$.}
\end{align}
Combining \eqref{DefiLa}, \eqref{LamC11} and \eqref{FIC} we obtain that there exists $C>0$ so that 
\begin{align}\label{LiLa}
\tfrac{\lambda_{n}}{\lambda(t_{n})}+\tfrac{\lambda(t_{n})}{\lambda_{n}}\leq C \qtq{for all $n\in \N$.}
\end{align}
Thus, by \eqref{FIC} and \eqref{LiLa} we obtain  \eqref{CompactX}.\end{proof}

\section{Precluding the compact solution}\label{S:Impossibility}

In this section we complete the proof of Theorem~\ref{Th2}(i). 

\subsection{Finite-time blowup}\label{S:FTB}
\begin{theorem}\label{Tfinito}
There are no solutions of \eqref{NLS} as in Proposition~\ref{Compacness11} with $T^{\ast}<\infty$.
\end{theorem}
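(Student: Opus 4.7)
Suppose for contradiction that such a solution $u$ exists with $T^{*}<\infty$. The plan is to establish two facts in turn: (a) the scale $\lambda(t)\to\infty$ as $t\to T^{*}$, and (b) the conserved mass $M(u_{0})$ must vanish. Since $M(u_{0})=0$ forces $u\equiv 0$, contradicting $\|u\|_{L^{10}_{t,x}}=\infty$, this will complete the argument.

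For step (a), suppose instead that along some $t_{n}\to T^{*}$ the scale $\lambda(t_{n})$ remains bounded. Writing $\tilde{u}(t,y):=\lambda(t)^{-1/2}u(t,y/\lambda(t))$, precompactness in $\dot H^{1}$ yields, along a subsequence, $\tilde u(t_{n})\to\tilde u^{\ast}$ in $\dot H^{1}$. The degenerate possibility $\lambda(t_{n})\to 0$ is excluded as follows: by a change of variables and mass conservation, $\|\tilde u(t_{n})\|_{L^{2}}^{2}=\lambda(t_{n})^{2}\|u_{0}\|^{2}_{L^{2}}\to 0$, so $\tilde u(t_{n})\to 0$ in $L^{2}$; on the other hand, strong $\dot H^{1}$-convergence gives $\tilde u(t_{n})\to\tilde u^{\ast}$ in $L^{6}$ and hence in $L^{2}_{\mathrm{loc}}$, forcing $\tilde u^{\ast}\equiv 0$ a.e.---in contradiction with the rescaled form of \eqref{DefiLa}, namely $\|\tilde u^{\ast}\|^{2}_{\dot H^{1}(B_{1})}=E^{c}(W)/\widetilde{C}>0$. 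Thus $\lambda(t_{n})\to\lambda^{\ast}\in(0,\infty)$, and $u(t_{n})\to v^{\ast}:=(\lambda^{\ast})^{1/2}\tilde u^{\ast}(\lambda^{\ast}\cdot)$ strongly in $\dot H^{1}$, with $v^{\ast}\in H^{1}$ by the uniform $L^{2}$-bound. Solving \eqref{NLS} from $v^{\ast}$ via Proposition~\ref{LWP} and invoking the stability result Lemma~\ref{stabi} (with error $e\equiv 0$) extends $u$ past $T^{*}$, contradicting maximality.

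For step (b), fix a radial $\chi\in C_{c}^{\infty}(\R^{3})$ with $\chi\equiv 1$ on $\{|x|\leq 1\}$ and $\mathrm{supp}\,\chi\subset\{|x|\leq 2\}$, and set
\[
V_{\rho}(t):=\int_{\R^{3}}\chi(x/\rho)|u(t,x)|^{2}\,dx.
\]
The NLS equation and integration by parts yield
\[
|V_{\rho}'(t)|\lesssim \rho^{-1}\|u(t)\|_{L^{2}}\|\nabla u(t)\|_{L^{2}}\lesssim \rho^{-1},
\]
using mass conservation together with $\|\nabla u(t)\|_{L^{2}}<\|\nabla W\|_{L^{2}}$ from Lemma~\ref{GlobalS}; hence $V_{\rho}(t)\geq V_{\rho}(0)-CT^{*}/\rho$. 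For the matching upper bound, given $\eta>0$, precompactness (applied in $L^{6}$) furnishes $R_{0}=R_{0}(\eta)$ such that $\int_{|x|>R_{0}/\lambda(t)}|u(t)|^{6}\,dx<\eta$ uniformly in $t$. Splitting the integral defining $V_{\rho}$ at $|x|=R_{0}/\lambda(t)$, and using H\"older--Sobolev on the inner ball and H\"older with the $L^{6}$-tail on the outer annulus,
\[
V_{\rho}(t)\lesssim \bigl(R_{0}/\lambda(t)\bigr)^{2}\|\nabla W\|^{2}_{L^{2}}+\rho^{2}\eta^{1/3},
\]
whose first term vanishes as $t\to T^{*}$ by step (a). Combining lower and upper bounds yields $V_{\rho}(0)-CT^{*}/\rho\leq C\rho^{2}\eta^{1/3}$; since the left-hand side is independent of $\eta$, sending $\eta\to 0$ gives $V_{\rho}(0)\leq CT^{*}/\rho$, and then $\rho\to\infty$ produces $\|u_{0}\|^{2}_{L^{2}}\leq 0$, the sought contradiction.

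The main obstacle is step (a), and within it the exclusion of $\lambda(t_{n})\to 0$: this uses the defining property \eqref{DefiLa} of $\lambda$ in an essential way to produce a nontrivial lower bound on the localized $\dot H^{1}$-mass of $\tilde u^{\ast}$, which is then played against the $L^{2}$-collapse coming from mass conservation. Step (b) is then routine, provided that the limits are taken in the correct order ($\eta\to 0$ before $\rho\to\infty$) so that the $\rho^{2}\eta^{1/3}$ error can be made to dominate neither side.
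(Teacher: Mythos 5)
Your proof is correct and follows essentially the same two-step strategy as the paper: first establish $\lambda(t)\to\infty$ as $t\to T^{\ast}$ by a stability/extension argument, then use the localized mass functional together with compactness to conclude that the conserved mass must vanish. The only minor variation is in ruling out $\lambda(t_n)\to 0$---you argue directly from mass conservation and the defining normalization \eqref{DefiLa}, whereas the paper cites the bound \eqref{LiLa} from the construction in Proposition~\ref{Compacness11}---but both arguments are valid and amount to the same idea.
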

\begin{proof}

Suppose that $u:[0, T^{\ast})\times \R^{3}\to \C$ were such a solution. We claim
\begin{align}\label{LamdaI}
\lim_{t\to T^{\ast}}\lambda(t)=\infty.
\end{align}
Suppose instead that \eqref{LamdaI} fails. Then there exists a sequence $t_{n}$ that converges to $T^{\ast}$ such that $\lambda(t_{n})\to \lambda_{0}\in \R$. By \eqref{LiLa} we infer that $\lambda_{0}>0$. Thus, by \eqref{CompactX}, we obtain that there exists nonzero $\psi\in \dot{H}^{1}$ such that
\begin{align}\label{CVs}
	\|u(t_{n})-\psi\|_{\dot{H}^{1}}\to 0 \qtq{as $n\to\infty$.}
\end{align}
Moreover, by the mass conservation and the uniqueness of the weak limit we have $\|\psi\|^{2}_{L^{2}}\leq M(u_{0})$.  By \eqref{CVs}, the stability result (Lemma~\ref{stabi}), and following the argument in \cite[Proposition 5.3, Case 1]{KenigMerle2006} we get that 
\[
\| u  \|_{S^{0}([T^{\ast}-\epsilon, T^{\ast}+\epsilon]\times \R^{3})}<\infty
\]
for some $\epsilon>0$ sufficiently small, which contradicts \eqref{SN} and hence proves \eqref{LamdaI}.

Next, fix $R>0$. For $t\in [0, T^\ast)$ we define 
\[
M_{R}(t)=\int_{\R^{3}}|u(t,x)|^{2}\xi\(\tfrac{x}{R} \)\,dx.
\qtq{}
\]
where $\xi(x)=1$ if $|x|\leq 1$ and $\xi(x)=0$ if $|x|\geq 2$. Since 
\[
M^{\prime}_{R}(t)=\tfrac{2}{R}\IM \int_{\R^{3}}\overline{u}\nabla u\cdot (\nabla\xi)\(  \tfrac{x}{R} \)dx,
\]
it follows from \eqref{PositiveP} that $M^{\prime}_{R}(t)\lesssim_{W} 1$. Thus
\begin{align}\label{InteF}
M_{R}(t_{1})=M_{R}(t_{2})+\int^{t_{2}}_{t_{1}}M^{\prime}_{R}(t)dt \lesssim_{W}M_{R}(t_{2}) +|t_{1}-t_{2}|.
\end{align}
On the other hand, for $\mu\in (0,1)$ and $t\in [0, T^{\ast})$,
\begin{align*}
\int_{|x|\leq R}|u(t,x)|^{2}dx
&\leq  
\int_{|x|\leq \mu R}|u(t,x)|^{2}dx+\int_{\mu R\leq |x|\leq R}|u(t,x)|^{2}dx\\
&\lesssim
\mu^{2} R^{2}\|u(t)\|^{2}_{L^{6}}+R^{2}\(\int_{|x|\geq \eta R}|u(t,x)|^{6}dx\)^{1/3}\\
&\lesssim \mu^{2} R^{2}\|\nabla W\|^{2}_{L^{2}}+R^{2}\(\int_{|x|\geq \eta R}|u(t,x)|^{6}dx\)^{1/3}.
\end{align*}
By \eqref{CompactX} and \eqref{LamdaI} we see that for all $R_{0}>0$,
\[\int_{|x|>R_{0}}|u(t,x)|^{6}\,dx\to 0 \qtq{as $t\to T^{\ast}$.}\]
Thus we deduce that for any $R>0$,
\[
\limsup_{t\nearrow T^{\ast}}\int_{|x|\leq R}|u(t,x)|^{2}dx=0.
\]
In particular, $M_{R}(t_{2})\to 0$ as $t_{2}\to T^{\ast}$, so that \eqref{InteF} implies
\[
M_{R}(t_{1}) \lesssim_{W}|T^{{\ast}}-t_{1}|.
\]
Letting $R\to\infty$ and using conservation of mass, we get $M(u_{0})\lesssim_{W}|T^{{\ast}}-t_{1}|$. Letting $t_{1}\to T^{\ast}$ we obtain ${u}_{0}=0$, which contradicts $E({u}_{0})= E^{c}(W)>0$.\end{proof}

\begin{remark}\label{FirRe} Combining the results established so far with existing results for NLS with combined nonlinearities (cf. \cite{TaoVisanZhang2007}), we can deduce that if $u$ is a maximal-lifespan solution to \eqref{NLS} satisfying 
\[
E({u}_{0})\leq E^{c}(W) \qtq{and} \|\nabla u_{0}\|_{L^{2}}\leq \|\nabla W\|_{L^{2}},
\]
then $u$ is global in time.
\end{remark}
%
%
%
%
%
%

\subsection{Infinite time blowup}\label{S:ITB}We now exclude the possibility of infinite time blowup.

\begin{theorem}\label{Tfinito22}
There are no solutions of \eqref{NLS} as in Proposition~\ref{Compacness11} with $T^{\ast}=\infty$.
\end{theorem}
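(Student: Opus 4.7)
The plan is to rule out the existence of such a compact solution $u$ by combining the localized virial identity with the modulation analysis, following the three-step strategy described in the introduction.

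\textbf{Step 1: A sequence approaching the ground state orbit.} The subthreshold hypothesis together with the sharp Gagliardo--Nirenberg inequality yields the coercivity $G[u(t)]\gtrsim \delta(t)$. Using the compactness modulo scaling (Proposition~\ref{Compacness11}) to control the tails in $F_R[u]-F^c_\infty[u]$, and observing that the defocusing cubic contribution $\int \Delta w_R\,|u|^4\,dx$ enters with the favorable positive sign (in contrast with the blowup case of Section~\ref{Sec22}), one obtains $\tfrac{d}{dt} I_R[u(t)]\geq c_0\,\delta(t)$ for $R$ sufficiently large. Since $|I_R[u(t)]|\lesssim R\|u\|_{L^2}\|\nabla u\|_{L^2}\lesssim R$, integration gives $\int_0^\infty \delta(t)\,dt<\infty$, so there exists $t_n\to\infty$ with $\delta(t_n)\to 0$; Proposition~\ref{Modilation11} then forces $\mu(t_n)\to\infty$.

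\textbf{Step 2: Refined virial meets modulation.} The key refined estimate is $|I_R[u(t)]|\lesssim R^2\,\delta(t)$ on $I_0=\{t:\delta(t)<\delta_0\}$, analogous to Lemma~\ref{Lemma11}. It follows by writing $u=e^{i\theta}(\mu^{1/2}W(\mu\cdot)+g)$ as in Lemma~\ref{ExistenceF}, with $\|g\|_{\dot{H}^{1}}\lesssim\delta$, using that $I_R[e^{i\theta}\mu^{1/2}W(\mu\cdot)]=0$ since $W$ is real, and bounding the cross terms via the support properties of $\nabla w_R$ and Sobolev embedding. Combining with Step~1 yields
\[\int_{t_1}^{t_2}\delta(s)\,ds\lesssim R^2\bigl[\delta(t_1)+\delta(t_2)\bigr] \qquad \text{for } t_1<t_2\in I_0.\]
Meanwhile, setting $\sigma(t):=1/\mu(t)$, the modulation estimate $|\mu'(t)/\mu(t)|\lesssim\mu^2(t)\delta(t)$ from Proposition~\ref{Modilation11} rewrites as $|(\sigma^2)'(t)|\lesssim\delta(t)$, so integration and chaining give $|\sigma^2(t_2)-\sigma^2(t_1)|\lesssim R^2[\delta(t_1)+\delta(t_2)]$ for $t_1<t_2\in I_0$.

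\textbf{Step 3: Contradiction via mass concentration.} Applied along $\{t_n\}$, the scale estimate of Step~2 shows that $\sigma^2(t_n)$ is Cauchy and converges to $0$ (since $\sigma(t_n)\to 0$). A propagation argument, combined with the fact that $\{t: \delta(t)\geq\delta_0\}$ has finite measure (by Step~1), shows $\lambda(t)\to\infty$ as $t\to\infty$ on all but a bounded-measure set. I would then conclude by adapting the mass-concentration argument from Theorem~\ref{Tfinito}: the concentration $\lambda(t)\to\infty$ together with the compactness modulo scaling and $W\notin L^2$ gives $\int_{|x|\leq R}|u(t)|^2\,dx\to 0$ for each fixed $R>0$; combining with a refined derivative bound $|M_R'(t)|\to 0$ as $t\to\infty$ (obtained by restricting compactness to the annulus $R\leq|x|\leq 2R$, an improvement over the uniform $O(1)$ bound used in the finite-time setting), one concludes $M(u_0)=0$, contradicting $E(u_0)=E^c(W)>0$.

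The main technical obstacle is this final mass-concentration step: in the finite-time case the shrinking interval $|T^{\ast}-t_1|\to 0$ combined with the uniform $|M_R'|=O(1)$ suffices, whereas in the infinite-time setting one must \emph{improve} $|M_R'(t)|$ to $o(1)$ as $t\to\infty$ using $\lambda(t)\to\infty$, and exploit the rigidity of the spatial scale from Step~2 (rather than boundedness of the time interval) to conclude.
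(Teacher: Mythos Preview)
Your Steps~1 and~2 are broadly correct in spirit but miss a key structural point, and your Step~3 does not close.

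In Step~1, the simple virial argument you describe yields only $\tfrac{d}{dt}I_R[u]\geq c_0\delta(t)-\varepsilon$ for each fixed $\varepsilon$ (with $R=R(\varepsilon)$ via compactness), hence $\int_0^T\delta\lesssim R+\varepsilon T$. This gives $\liminf_{t\to\infty}\delta(t)=0$ and your sequence $t_n$, but \emph{not} $\int_0^\infty\delta<\infty$; the error is a small constant, not $\varepsilon\delta(t)$, so it cannot be absorbed. In Step~2 you write the refined estimate as $\int_{t_1}^{t_2}\delta\lesssim R^2[\delta(t_1)+\delta(t_2)]$ with a fixed $R$. The paper instead chooses $R\sim\sup_{[t_1,t_2]}\lambda(t)^{-1}$ (which is what compactness actually dictates), so that the constant becomes $\sup_{[t_1,t_2]}\lambda(t)^{-2}$; this scale-dependent factor is precisely what allows the argument to close.

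The genuine gap is Step~3. Adapting the finite-time mass-concentration argument fails for $T^*=\infty$: even if $\lambda(t)\to\infty$ and hence $|M_R'(t)|\to 0$, you cannot conclude $\int_t^\infty|M_R'(s)|\,ds\to 0$ without quantitative decay, which compactness alone does not provide. The paper avoids this entirely. It combines the refined virial estimate $\int_{t_1}^{t_2}\delta\leq C\bigl(\sup_{[t_1,t_2]}\lambda^{-2}\bigr)[\delta(t_1)+\delta(t_2)]$ with a scale-variation estimate $|\lambda(t_2)^{-2}-\lambda(t_1)^{-2}|\leq\widetilde C\int_{t_1}^{t_2}\delta$ (valid for \emph{all} $t_1<t_2$, not only on $I_0$; this requires a separate local analysis of $\lambda$ on intervals where $\delta\geq\delta_1$, not just the modulation ODE). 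Chaining these along $b<t_n$ and sending $n\to\infty$ gives the closing inequality
\[
\sup_{t\geq a}\lambda(t)^{-2}\;\leq\; C_0\,\delta(a)\,\sup_{t\geq a}\lambda(t)^{-2},
\]
so choosing $a=t_0$ with $\delta(t_0)\leq\tfrac{1}{2C_0}$ forces $\sup_{t\geq t_0}\lambda(t)^{-2}=0$, an immediate contradiction. Your fixed-$R$ version of Step~2 loses exactly the factor that makes this bootstrap work.
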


We prove Theorem~\ref{Tfinito22} by contradiction.  Thus for the rest of this section, we suppose that $u:[0, \infty)\times \R^{3}\to \C$ is a solution to \eqref{NLS} as in Proposition~\ref{Compacness11} with $T^\ast=\infty$. In particular,  $u$ satisfies
\begin{align}\label{Eequal}
E(u_{0})=E^{c}(W), \quad \|\nabla u_{0}\|_{L^{2}}<\|\nabla W\|_{L^{2}},\quad \text{and}\quad  \|u \|_{L_{t,x}^{10}([0, \infty)\times\R^{3})}=\infty.
\end{align}
Moreover,  there exists a function  $\lambda_{0}:[0, \infty) \to (0,\infty)$ such that the set
 \begin{equation}\label{New0Compact}
\left\{\lambda_{0}(t)^{-\frac{1}{2}}u(t, \tfrac{x}{\lambda_{0}(t)}): t\in [0, +\infty)\right\} \quad
\text{is pre-compact in $\dot{H}^{1}(\R^{3})$}.
\end{equation}

\begin{lemma}\label{Parametrization}
If $\delta_{0}$ is sufficiently small, then there exists $C>0$ so that
\begin{align}\label{limia}
	\tfrac{\lambda_{0}(t)}{\mu(t)}+\tfrac{\mu(t)}{\lambda_{0}(t)}\leq C \quad \text{for  $t\in I_{0}$},
\end{align}
where the parameter $\mu(t)$ is given in Proposition~\ref{Modilation11}.
\end{lemma}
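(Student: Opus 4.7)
The plan is to argue by contradiction using the modulation decomposition to relate $\mu(t)$ to $\lambda_{0}(t)$ through the compactness of the rescaled orbit. Suppose \eqref{limia} fails; then along some sequence $\{t_{n}\}\subset I_{0}$ the ratio $\rho_{n}:=\mu(t_{n})/\lambda_{0}(t_{n})$ satisfies either $\rho_{n}\to 0$ or $\rho_{n}\to\infty$. The contradiction will come from comparing the \emph{strong} limit in $\dot{H}^{1}$ coming from \eqref{New0Compact} with the \emph{weak} limit of the rescaled ground-state profile.

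Set
\[
v_{n}(x):=\lambda_{0}(t_{n})^{-\frac{1}{2}}u(t_{n},x/\lambda_{0}(t_{n})).
\]
By \eqref{New0Compact}, after passing to a subsequence, $v_{n}\to v^{\ast}$ strongly in $\dot{H}^{1}$ for some $v^{\ast}\in \dot{H}^{1}$. Using Proposition~\ref{Modilation11} and the identity $\lambda_{0}(t_{n})^{-\frac{1}{2}}\mu(t_{n})^{\frac{1}{2}}=\rho_{n}^{\frac{1}{2}}$, the modulation decomposition rescales to
\[
v_{n}(x)=e^{i\theta(t_{n})}\bigl[\tilde{g}_{n}(x)+\rho_{n}^{\frac{1}{2}}W(\rho_{n}x)\bigr],\qquad \tilde{g}_{n}(x):=\lambda_{0}(t_{n})^{-\frac{1}{2}}g(t_{n},x/\lambda_{0}(t_{n})),
\]
with $\|\tilde{g}_{n}\|_{\dot{H}^{1}}=\|g(t_{n})\|_{\dot{H}^{1}}\lesssim \delta(t_{n})\leq\delta_{0}$ by \eqref{Estimatemodu}. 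Passing to a further subsequence, $e^{i\theta(t_{n})}\to e^{i\theta^{\ast}}$ on $S^{1}$, so
\[
\bigl\|v_{n}-e^{i\theta^{\ast}}\rho_{n}^{\frac{1}{2}}W(\rho_{n}\cdot)\bigr\|_{\dot{H}^{1}}\leq\|\tilde{g}_{n}\|_{\dot{H}^{1}}+\bigl|e^{i\theta(t_{n})}-e^{i\theta^{\ast}}\bigr|\|W\|_{\dot{H}^{1}}\leq C\delta_{0}+o(1).
\]

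The main point is that under $\rho_{n}\to 0$ or $\rho_{n}\to\infty$, the scaling bubble $\rho_{n}^{\frac{1}{2}}W(\rho_{n}\cdot)$ converges weakly to zero in $\dot{H}^{1}$. This is the standard fact that a fixed nonzero $\dot{H}^{1}$ profile rescaled off to $0$ or $\infty$ has vanishing weak limit; it can be seen by testing against $C_{c}^{\infty}$ via integration by parts and dominated convergence, since $\|\rho_{n}^{\frac{1}{2}}W(\rho_{n}\cdot)\|_{L^{6}}=\|W\|_{L^{6}}$ is bounded while $\rho_{n}^{\frac{1}{2}}W(\rho_{n}x)\to 0$ pointwise a.e. Consequently $v^{\ast}-e^{i\theta^{\ast}}\rho_{n}^{\frac{1}{2}}W(\rho_{n}\cdot)\rightharpoonup v^{\ast}$ weakly in $\dot{H}^{1}$, so weak lower semicontinuity of the norm gives
\[
\|v^{\ast}\|_{\dot{H}^{1}}\leq\liminf_{n\to\infty}\bigl\|v^{\ast}-e^{i\theta^{\ast}}\rho_{n}^{\frac{1}{2}}W(\rho_{n}\cdot)\bigr\|_{\dot{H}^{1}}\leq C\delta_{0}.
\]

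On the other hand, scaling invariance of $\dot{H}^{1}$ yields $\|v_{n}\|_{\dot{H}^{1}}=\|\nabla u(t_{n})\|_{L^{2}}$, and the assumption $t_{n}\in I_{0}$ together with $\delta(t_{n})<\delta_{0}$ forces
\[
\|v^{\ast}\|_{\dot{H}^{1}}^{2}=\lim_{n\to\infty}\|\nabla u(t_{n})\|_{L^{2}}^{2}\geq \|\nabla W\|_{L^{2}}^{2}-\delta_{0}.
\]
Combining the two bounds gives $\|\nabla W\|_{L^{2}}^{2}\leq C^{2}\delta_{0}^{2}+\delta_{0}$, which is false for $\delta_{0}$ sufficiently small. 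The main subtlety is the rigorous handling of the weak limit of the rescaled bubble (and phase-compactness to remove the $e^{i\theta(t_{n})}$ factor); everything else is bookkeeping with the estimates from Proposition~\ref{Modilation11}.
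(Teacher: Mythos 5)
Your proof is correct, but it takes a genuinely different route from the paper's. The paper avoids weak limits entirely: using \eqref{DecomU} and \eqref{Estimatemodu}, it observes that the annulus $\{\tfrac{1}{\mu(t)}\le|x|\le\tfrac{2}{\mu(t)}\}$ (the characteristic scale of the bubble) carries a definite amount of $\dot H^1$ mass, namely
\[
\int_{\frac{1}{\mu(t)}\le|x|\le\frac{2}{\mu(t)}}|\nabla u(t,x)|^2\,dx
\ge \tfrac12\int_{1\le|x|\le 2}|\nabla W|^2\,dx - C\delta(t)^2 \ge c>0,
\]
and then rescales by $\lambda_0(t)$ to see that the rescaled orbit places mass $\ge c$ on the annulus $\{\tfrac{\lambda_0(t)}{\mu(t)}\le|x|\le\tfrac{2\lambda_0(t)}{\mu(t)}\}$. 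If $\lambda_0/\mu$ escaped to $0$ or $\infty$ along a subsequence, this annulus would escape with it, contradicting the tightness that precompactness in $\dot H^1$ supplies. Your argument instead reduces to the statement that $\rho_n^{1/2}W(\rho_n\cdot)\rightharpoonup 0$ in $\dot H^1$ when $\rho_n\to 0$ or $\rho_n\to\infty$, extracts the strong limit $v^*$ from \eqref{New0Compact}, and shows $\|v^*\|_{\dot H^1}\lesssim\delta_0$ by weak lower semicontinuity, contradicting $\|v^*\|_{\dot H^1}^2\ge\|\nabla W\|_{L^2}^2-\delta_0$. Both proofs hinge on the same two ingredients (modulation decomposition plus compactness), but the paper's annulus computation is more direct and self-contained, whereas yours invokes the standard profile-decomposition fact about escaping bubbles together with a weak-LSC step. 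One small nit: when you write $\|v^*\|_{\dot H^1}\le\liminf\|v^*-e^{i\theta^*}\rho_n^{1/2}W(\rho_n\cdot)\|_{\dot H^1}\le C\delta_0$, the second inequality deserves the one-line justification that $\|v^*-e^{i\theta^*}\rho_n^{1/2}W(\rho_n\cdot)\|\le\|v^*-v_n\|+\|v_n-e^{i\theta^*}\rho_n^{1/2}W(\rho_n\cdot)\|=o(1)+C\delta_0+o(1)$, using the strong convergence $v_n\to v^*$; you have all the pieces but the chain isn't stated explicitly.
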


\begin{proof}
By \eqref{DecomU} and \eqref{Estimatemodu} we deduce for $\delta_{0}\ll 1$ and $t\in I_{0}$,
\[
\int_{\frac{1}{\mu(t)}\leq|x|\leq\frac{2}{\mu(t)}}|\nabla u(t,x)|^{2}dx
\geq \tfrac{1}{2}\int_{1\leq|x|\leq 2}|\nabla W(x)|^2dx-C\delta^{2}(t)\geq c
\]
for some positive constant $c$. By a change of variable we have
\begin{align}\label{PoW}
\int_{\frac{\lambda_{0}(t)}{\mu(t)}\leq|x|\leq\frac{2\lambda_{0}(t)}{\mu(t)}}\tfrac{1}{\lambda_{0}(t)^{3}}
\left|\nabla u\(t,\tfrac{x}{\lambda_{0}(t)}\)\right|^{2}dx\geq c\qtq{for all}t\in I_0.
\end{align}
Thus \eqref{limia} follows from compactness (cf. \eqref{New0Compact}).
\end{proof}

Using Lemma~\ref{Parametrization}, we deduce that
\begin{equation}\label{CompacNew}
\left\{\lambda(t)^{-\frac{1}{2}}u(t, \tfrac{x}{\lambda(t)}): t\in [0, \infty)\right\} \quad
\text{is pre-compact in $\dot{H}^{1}(\R^{3})$},
\end{equation}
where
\[
\lambda(t)=
\begin{cases}
\lambda_{0}(t)& \quad t\in [0, \infty)\setminus I_{0},\\
\mu(t)&\quad t\in I_{0}.
\end{cases}
\]

We now prove several lemmas relating the behavior of the scale function, $L^4$ portion of the energy, the function $\delta(t)$, and the virial functional.

\begin{lemma}\label{SequenceInf}
For any sequence $\left\{t_{n}\right\}\subset [0, \infty)$,  the following holds
\begin{equation}\label{ZeroPoten}
\lambda(t_{n})\to \infty \quad \text{if and only if}\quad \int_{\R^{3}}|u(t_{n},x)|^{4}dx\to 0.
\end{equation}
\end{lemma}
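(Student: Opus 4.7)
The plan is to prove both implications using the change-of-variables identities
\[
\|u(t)\|_{L^4}^4 = \lambda(t)^{-1} \|v(t)\|_{L^4}^4, \qquad \|v(t)\|_{L^2}^2 = 2\lambda(t)^2 M(u_0),
\]
where $v(t,y) := \lambda(t)^{-1/2} u(t, y/\lambda(t))$, combined with the pre-compactness of $\{v(t) : t \in [0, \infty)\}$ in $\dot H^1$ (and hence in $L^6$ via Sobolev) from \eqref{CompacNew}, together with the conservations of mass and energy. The first identity is the key scaling relation that converts small-scale concentration ($\lambda \to \infty$) into $L^4$-smallness of $u$, while the second will be used to force the limit profile of $v(t_n)$ to be zero in the converse direction.

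For the forward direction, suppose $\lambda(t_n) \to \infty$. Pre-compactness in $L^6$ yields uniform tail decay $\sup_t \|v(t)\|_{L^6(|y|>R)} \to 0$ as $R \to \infty$. Changing variables, for each fixed $\epsilon > 0$,
\[
\|u(t_n)\|_{L^6(|x|>\epsilon)} = \|v(t_n)\|_{L^6(|y| > \epsilon \lambda(t_n))} \to 0 \qtq{as $n \to \infty$.}
\]
I would then split
\[
\|u(t_n)\|_{L^4}^4 \lesssim \epsilon\, \|u(t_n)\|_{L^6}^4 + \|u(t_n)\|_{L^2}\, \|u(t_n)\|_{L^6(|x|>\epsilon)}^3,
\]
using H\"older on the inner ball and interpolation ($\|f\|_{L^4}^4 \leq \|f\|_{L^2}\|f\|_{L^6}^3$) on the outer region. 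The sub-threshold bound $\|\nabla u(t)\|_{L^2} \leq \|\nabla W\|_{L^2}$ from Lemma~\ref{GlobalS} and mass conservation control the prefactors uniformly, so sending $n \to \infty$ then $\epsilon \to 0$ yields $\|u(t_n)\|_{L^4} \to 0$.

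For the converse, I would argue by contradiction. If $\|u(t_n)\|_{L^4} \to 0$ but $\lambda(t_n) \not\to \infty$, pass to a subsequence along which $\lambda(t_n) \to \lambda_\infty \in [0, \infty)$. Using \eqref{CompacNew}, extract a further subsequence with $v(t_n) \to v_\infty$ in $\dot H^1$, hence in $L^6$ and (up to one more extraction) a.e. From the scaling identity $\|v(t_n)\|_{L^4}^4 = \lambda(t_n) \|u(t_n)\|_{L^4}^4 \to 0$ when $\lambda_\infty > 0$, and from $\|v(t_n)\|_{L^2}^2 = 2\lambda(t_n)^2 M(u_0) \to 0$ when $\lambda_\infty = 0$, in either case there is a subsequence converging to $0$ a.e., forcing $v_\infty = 0$. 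Then $\|\nabla u(t_n)\|_{L^2} = \|\nabla v(t_n)\|_{L^2} \to 0$, and Sobolev gives $\|u(t_n)\|_{L^6} \to 0$. Conservation of energy now forces
\[
E^c(W) = E(u_0) = \lim_n \bigl[\tfrac12 \|\nabla u(t_n)\|_{L^2}^2 - \tfrac16 \|u(t_n)\|_{L^6}^6 + \tfrac14 \|u(t_n)\|_{L^4}^4\bigr] = 0,
\]
contradicting $E^c(W) > 0$.

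The main technical point is the converse direction, specifically handling the two subcases $\lambda_\infty > 0$ and $\lambda_\infty = 0$ uniformly: because \eqref{CompacNew} only gives compactness in $\dot H^1$ (not $H^1$), the identification of $v_\infty$ as the zero function cannot be extracted from $L^4$-convergence alone and must come from pairing two independent sources of vanishing—the $L^4$-scaling identity and the $L^2$-mass identity. Once $v_\infty = 0$ is established, the upgrade from $\dot H^1$-convergence to vanishing of $\|\nabla u(t_n)\|_{L^2}$ is automatic, and the strict positivity of $E^c(W)$ delivers the contradiction.
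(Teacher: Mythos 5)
Your proof is correct and follows essentially the same strategy as the paper: compactness gives uniform tail decay, which combined with mass conservation and interpolation handles the forward implication, and the converse is a contradiction argument using compactness, the scaling of $L^4$ and $L^2$ under $v(t)=\lambda(t)^{-1/2}u(t,\cdot/\lambda(t))$, and the positivity of $E^c(W)$. One small point in your favor: the paper's converse simply asserts that convergence of $\lambda(t_n)$ plus \eqref{CompacNew} gives $u(t_n)\to\phi$ strongly in $\dot H^1$, which implicitly requires $\lambda_\infty>0$ (a bound $\lambda(t)\gtrsim 1$ is only proven later, in the proof of Proposition~\ref{BoundedUN}); you explicitly treat $\lambda_\infty=0$ via the $L^2$-scaling identity, closing that gap.
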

\begin{proof}
Suppose  $\lambda(t_{n})\to \infty$ and $\epsilon>0$. Using \eqref{CompacNew}, there exists $\rho_{\epsilon}>0$ so that
\[
\int_{{|x|\geq \tfrac{\rho_{\epsilon}}{\lambda(t_{n})}}}\bigl[|\nabla u(t_{n},x)|^{2}+|u(t_{n},x)|^{6}\bigr]dx\ll\epsilon \qtq{for all $n\in \N$.}
\]
As $M(u(t_{n}))=M(u_{0})$, by interpolation we obtain
\begin{align}\label{PNe}
	\int_{{|x|\geq \tfrac{\rho_{\epsilon}}{\lambda(t_{n})}}}| u(t_{n},x)|^{4}dx <\epsilon \qtq{for all $n\in \N$.}
\end{align}
Next, since $\tfrac{1}{\lambda(t_{n})}\to 0$ as $n\to\infty$, we deduce from dominated convergence that
\begin{align}\label{Elar}
\int_{{|x|\leq \tfrac{\rho_{\epsilon}}{\lambda(t_{n})}}}| u(t_{n},x)|^{4}dx  \to 0 \qtq{as $n\to \infty$.}
\end{align}
Combining \eqref{PNe} and \eqref{Elar} we have that $\int_{\R^{3}}|u(t_{n},x)|^{4}dx$ tends to zero as $n\to +\infty$.

On the other hand, suppose that
\begin{align}\label{Otra}
\int_{\R^{3}}|u(t_{n},x)|^{4}dx\to 0,
\end{align}
but  (possibly for a subsequence only) $\left\{\lambda(t_{n})\right\}_{n\in\N}$ converges.  From \eqref{CompacNew} we see that there exists $\phi\in \dot{H}^{1}(\R^{3})$ so that
\begin{equation}\label{contra11}
u(t_{n})\to \phi\quad \text{in $\dot{H}^{1}(\R^{3})$}.
\end{equation}
Combining \eqref{Otra} and \eqref{contra11} we have $E^{c}(W)=E^{c}(\phi)$, so that $\phi\neq 0$. Moreover, as $M(u(t_{n}))=M(u_{0})$, we derive that $\phi\in L^{2}(\R^{3})$. 

However, by the Gagliardo--Nirenberg inequality and \eqref{contra11} we get
\[
\|u(t_{n})-\phi\|^{4}_{L^{4}}\lesssim \|u(t_{n})-\phi\|_{L^{2}}\|u(t_{n})-\phi\|^{3}_{\dot{H}^{1}}
\to 0 \qtq{as $n\to \infty$.}
\]
Thus, by \eqref{Otra} we have that $\|\phi\|^{4}_{L^{4}}=0$, a contradiction. \end{proof}

\begin{lemma}\label{DeltaZero}
If $t_{n}\to \infty$, then
\begin{equation}\label{equivalence}
\lambda(t_{n})\to \infty \quad \text{if and only if}\quad \delta(t_{n})\to 0.
\end{equation}
\end{lemma}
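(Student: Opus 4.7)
The plan is to prove the two implications separately; the reverse direction is essentially immediate from the modulation analysis, while the forward direction requires compactness and a rigidity argument.

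For the reverse direction $(\delta(t_n) \to 0 \Rightarrow \lambda(t_n) \to \infty)$: once $\delta(t_n) < \delta_0$, which holds for $n$ large, we have $t_n \in I_0$ and $\lambda(t_n) = \mu(t_n)$ by the definition of $\lambda$ given just after Lemma~\ref{Parametrization}. Proposition~\ref{Modilation11} then yields $\tfrac{1}{1+\mu(t_n)} \lesssim \delta(t_n) \to 0$, so $\lambda(t_n) = \mu(t_n) \to \infty$.

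For the forward direction $(\lambda(t_n) \to \infty \Rightarrow \delta(t_n) \to 0)$: first apply Lemma~\ref{SequenceInf} to deduce $\|u(t_n)\|_{L^4}^4 \to 0$, and combine this with conservation of energy to get
\[
E^c(u(t_n)) = E(u(t_n)) - \tfrac{1}{4}\|u(t_n)\|_{L^4}^4 \to E^c(W).
\]
By the compactness property \eqref{CompacNew}, pass to a subsequence so that $\phi_n := \lambda(t_n)^{-1/2}\,u\bigl(t_n, \cdot/\lambda(t_n)\bigr)$ converges strongly in $\dot H^1$ to some $\phi$. Scale invariance of the critical energy, together with continuity of the $\dot H^1$ and $L^6$ norms (the latter via Sobolev embedding), then gives $E^c(\phi) = E^c(W)$ and $\|\nabla \phi\|_{L^2}^2 \leq \|\nabla W\|_{L^2}^2$.

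The crux---and main obstacle---is to upgrade this to $\|\nabla \phi\|_{L^2}^2 = \|\nabla W\|_{L^2}^2$, equivalently that $\phi$ is (up to phase and scale) the ground state. I would argue via the limiting quintic NLS dynamics: consider the rescaled solutions
\[
v_n(\tau, y) := \lambda(t_n)^{-1/2}\,u\bigl(t_n + \tau/\lambda(t_n)^2,\, y/\lambda(t_n)\bigr),
\]
which solve a perturbed quintic NLS whose cubic coefficient $\lambda(t_n)^{-2} \to 0$. By Lemma~\ref{stabi}, on each bounded time interval $v_n$ converges (in the relevant Strichartz norms) to the quintic NLS solution $V$ with $V(0) = \phi$. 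The pre-compactness of the rescaled orbit of $u$ transfers to a pre-compactness property of $V$, and the classification of radial pre-compact $\dot H^1$-threshold solutions of the quintic NLS (cf.~\cite{DuyckaMerle2009}) then forces $V$ to coincide (up to phase and scale) with the static ground state $W$. Hence $\phi$ is itself a rescaling of $W$, yielding $\|\nabla \phi\|_{L^2} = \|\nabla W\|_{L^2}$ and consequently $\delta(t_n) \to 0$ along the subsequence. Since every subsequence admits such a further subsequence, the full sequence satisfies $\delta(t_n) \to 0$.
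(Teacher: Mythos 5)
Your reverse direction ($\delta(t_n)\to 0 \Rightarrow \lambda(t_n)\to\infty$) matches the paper and is fine. The forward direction is where you diverge, and the key step has a genuine gap.

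You want to transfer the pre-compactness \eqref{CompacNew} of $u$'s rescaled orbit to a pre-compactness property of the quintic NLS solution $V$ with $V(0)=\phi$, and then invoke ``the classification of radial pre-compact $\dot H^1$-threshold solutions'' to force $V=W$. Two problems. First, the transfer itself is not immediate: $V(\tau)$ is obtained as a \emph{fixed-$\tau$} limit of the rescaled $v_n$; to get pre-compactness of $\{V(\tau):\tau\in\R\}$ modulo scaling you must produce a scale function $\lambda_V(\tau)$ and run a diagonal argument in $(n,\tau)$ that ties $\lambda_V$ to $\lambda(t_n+\tau/\lambda(t_n)^2)/\lambda(t_n)$ and uses \eqref{CompacNew}; and crucially you need this on all of $\R$ (pre-compactness on a bounded time interval is vacuous). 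Second, \cite{DuyckaMerle2009} does not state a rigidity theorem phrased as ``pre-compact threshold solution $\Rightarrow W$.'' What \cite[Theorem 2]{DuyckaMerle2009} gives is a classification by asymptotic behavior: a radial $\dot H^1$-solution with $E^c=E^c(W)$ and $\|\nabla v_0\|<\|\nabla W\|$ either scatters in both directions or equals $W^-$ up to symmetries (which scatters in one time direction). To get $V=W$ you would still have to show pre-compactness rules out scattering, apply Theorem 2, and then rule out $W^-$ by noting its forward orbit is also not pre-compact modulo scaling --- a chain you have not written and which would itself require the (unproved) two-sided transfer of compactness.

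The paper avoids both issues. It argues by contradiction: assume $\delta(t_n)\geq c>0$ along a subsequence, so the limit $v_0$ satisfies $E^c(v_0)=E^c(W)$ with the \emph{strict} inequality $\|\nabla v_0\|<\|\nabla W\|$. Then \cite[Theorem 2]{DuyckaMerle2009} directly yields that the quintic flow $v$ has finite $L^{10}_{t,x}$ on at least one half-line. Rescaling $v$ exactly as in Lemma~\ref{P:embedding} to build approximate solutions to \eqref{NLS} matching $u(t_n)$, and invoking the stability Lemma~\ref{stabi}, one deduces that $u$ has finite $L^{10}_{t,x}$ either on $[t_n,\infty)$ or, with a bound uniform in $n$, on $[0,t_n]$ with $t_n\to\infty$; either way this contradicts $\|u\|_{L^{10}_{t,x}([0,\infty)\times\R^3)}=\infty$. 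This bypasses any transfer of pre-compactness and any rigidity classification of compact solutions, using only the scattering dichotomy plus stability. Your outline could likely be repaired, but as written it leans on an unjustified compactness transfer and a miscited rigidity statement, precisely at the step you yourself flag as ``the crux.''
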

\begin{proof} If $\delta(t_{n})\to 0$,  then Proposition \ref{Modilation11} and 
Lemma \ref{SequenceInf} imply $\lambda(t_{n})\to \infty$.

Next assume towards a contradiction that $\lambda(t_{n})\to \infty$ but (possibly for a subsequence only) we have
\begin{equation}\label{Zeroplus}
\delta(u(t_{n}))\geq c>0.
\end{equation}
As 
\[
\left\{\lambda(t)^{-\frac{1}{2}}u(t, \tfrac{x}{\lambda(t)}): t\in [0, \infty)\right\} \quad
\text{is pre-compact in $\dot{H}^{1}(\R^{3})$},
\] 
there exists $v_{0}\in \dot{H}^{1}(\R^{3})$ such that (passing to a further subsequence if necessary)
\begin{equation}\label{CompactV}
\lambda(t_{n})^{-\frac{1}{2}}u(t_{n}, \tfrac{x}{\lambda(t_{n})}) \to v_{0} \quad \text{in}\quad \dot{H}^{1}(\R^{3}).
\end{equation}
In particular, from \eqref{Eequal}, \eqref{ZeroPoten} and \eqref{Zeroplus}  we have
\begin{align}\label{ScR}
	E^{c}(v_{0})=E^{c}(W) \quad \text{and}\quad \|\nabla v_{0}\|_{L^{2}}<\|\nabla W\|_{L^{2}}.
\end{align}

Now let $v$ be the solution of the energy-critical NLS  \eqref{ECgNLS} with$v|_{t=0}=v_{0}$. By \cite[Theorem 2]{DuyckaMerle2009}, $v$ is global, with 
\[
\|v\|_{L^{10}_{t,x}([0, \infty)\times \R^{3})}<\infty\qtq{or}\|v\|_{L^{10}_{t,x}((-\infty,0]\times \R^{3})}<\infty
\]
(or both). In either case, we will use stability theory to deduce that 
$u\in L^{10}_{t,x}([0, \infty)\times \R^{3})$, contradicting \eqref{Eequal}.

The argument is similar to the one used in the proof of Proposition~\ref{P:embedding}.  We suppose that $\|v\|_{L^{10}_{t,x}([0, +\infty)\times \R^{3})}<+\infty$. The case when $\|v\|_{L^{10}_{t,x}((-\infty,0]\times \R^{3})}<+\infty$ can be treated similarly. 

We let $w_{n}$ be the solution to the energy-critical NLS \eqref{ECgNLS} with data $w_{n}(0)=P_{\geq (\lambda_{n})^{-\theta}}v_{0}$ for some $0<\theta \ll1$, where
$\lambda_{n}:=\lambda(t_{n})$.  Since $\lambda_{n}\to \infty$ as $n\to\infty$, 
\begin{align}\label{cvv}
	\| P_{\geq (\lambda_{n})^{-\theta}}v_{0}-v_{0} \|_{\dot{H}^{1}_{x}}\rightarrow 0 \quad \text{as $n\rightarrow \infty$}.
\end{align}
By \eqref{cvv} and stability for \eqref{ECgNLS} (cf. \cite[Theorem 2.14]{KenigMerle2006}), we deduce that
\begin{align*}
	\|\nabla w_{n}   \|_{S^{0}([0, +\infty)\times \R^{3})}\lesssim 1\qtq{for $n$ large.}
\end{align*}
Using Lemma~\ref{BIN}, we obtain
\[
\| P_{\geq (\lambda_{n})^{-\theta}}v_{0}\|_{{L}^{2}}\lesssim\lambda^{\theta}_{n}\| \nabla v_{0}\|_{{L}^{2}},
\]
which implies (by persistence of regularity)
\begin{align}\label{wpersi}
	\|w_{n}\|_{S^{0}([0, +\infty)\times \R^{3})}\leq C(\|v_{0}\|_{\dot{H}^{1}_{x}})\lambda^{\theta}_{n}.
\end{align}
Arguing as in  Proposition~\ref{P:embedding} one can show that
\[
\tilde{u}_{n}(t,x)=\lambda^{\frac{1}{2}}_{n}w_{n}(\lambda^{2}_{n}t, \lambda_{n}x)
\]
are approximate solutions of equation \eqref{NLS} on $[0, \infty)$. Indeed, following the same argument as \eqref{Inter1} we see that
\[
\lim_{n\to +\infty}\|\nabla\bigl( |\tilde{u}_{n}|^{2}\tilde{u}_{n}\bigr) \|_{L^{\frac{10}{7}}_{t, x}(\R\times \R^{3})}=0.
\]
Moreover, combining \eqref{CompactV} and \eqref{cvv}  we get
\[
\| \tilde{u}_{n}(0)-u(t_{n}) \|_{\dot{H}^{1}_{x}}\rightarrow 0 \quad \text{as $n\rightarrow \infty$}.
\]
Thus we may apply Lemma \ref{stabi} to deduce that for $n$ large,
\[\|u(t_{n}+t)\|_{L^{10}_{t,x}([0, +\infty)\times \R^{3})}
=\|u\|_{L^{10}_{t,x}([t_{n}, +\infty))}\lesssim 1,
\]
contradicting \eqref{Eequal}.\end{proof}

\begin{lemma}\label{Compa}
There exists $c>0$ so that
\begin{equation}\label{InequeN}
{{F^{c}_{\infty}}}[u(t)]=8\|\nabla u(t)\|^{2}_{L^{2}}-8\|u(t)\|^{6}_{L^{6}}\geq c\delta(t)
\qtq{for all $t\geq 0$.}
\end{equation}
\end{lemma}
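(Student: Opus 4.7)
My plan is to prove the lemma directly from the sharp Sobolev inequality and a uniform positive lower bound on $\|\nabla u(t)\|_{L^2}$ coming from the Hamiltonian structure, rather than invoking the compactness hypothesis.

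First I would show that there exists $c_1 > 0$ with $\|\nabla u(t)\|_{L^2}^{2} \geq c_1$ for every $t \in [0, \infty)$. Since $E(u(t)) = E^c(W)$, we have
\[
E^c(W) \leq \tfrac{1}{2}\|\nabla u(t)\|_{L^2}^{2} + \tfrac{1}{4}\|u(t)\|_{L^4}^{4};
\]
combined with mass conservation and the Gagliardo--Nirenberg inequality $\|f\|_{L^4}^{4} \lesssim \|f\|_{L^2}\|\nabla f\|_{L^2}^{3}$, this gives
\[
E^c(W) \lesssim \|\nabla u(t)\|_{L^2}^{2} + M(u_0)^{1/2}\|\nabla u(t)\|_{L^2}^{3}.
\]
Because the right-hand side vanishes at $\|\nabla u(t)\|_{L^2} = 0$, the positivity $E^c(W) > 0$ forces the claimed uniform lower bound.

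Next, the sharp Sobolev inequality \eqref{GI} combined with the Pohozaev identity \eqref{PoQ} gives $C_{GN}^{6} = 1/\|\nabla W\|_{L^2}^{4}$, so
\[
\|u(t)\|_{L^6}^{6} \leq \frac{\|\nabla u(t)\|_{L^2}^{6}}{\|\nabla W\|_{L^2}^{4}}
\qtq{and hence}
G[u(t)] \geq \|\nabla u(t)\|_{L^2}^{2} \cdot \frac{\|\nabla W\|_{L^2}^{4} - \|\nabla u(t)\|_{L^2}^{4}}{\|\nabla W\|_{L^2}^{4}}.
\]
Factoring
\[
\|\nabla W\|_{L^2}^{4} - \|\nabla u(t)\|_{L^2}^{4} = \delta(t)\bigl(\|\nabla W\|_{L^2}^{2} + \|\nabla u(t)\|_{L^2}^{2}\bigr) \geq \|\nabla W\|_{L^2}^{2}\,\delta(t),
\]
using $\|\nabla u(t)\|_{L^2} \leq \|\nabla W\|_{L^2}$ (cf. Lemma \ref{GlobalS}), and inserting the lower bound from the first step, I obtain
\[
G[u(t)] \geq \frac{c_1}{\|\nabla W\|_{L^2}^{2}}\,\delta(t),
\]
and multiplying by $8$ yields \eqref{InequeN} with $c := 8c_1/\|\nabla W\|_{L^2}^{2}$.

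The only genuinely delicate point is the uniform lower bound on $\|\nabla u(t)\|_{L^2}$ in the first step; once it is in hand, the remainder is a direct variational computation using the sharp Sobolev inequality and Pohozaev, and in particular requires neither the compactness hypothesis nor the modulation analysis of the preceding sections.
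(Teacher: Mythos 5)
Your proof is correct and takes a genuinely different, cleaner route than the paper. The paper argues by contradiction: starting from the energy identity $F^{c}_{\infty}[u(t)]=16\delta(t)-12\|u(t)\|^{4}_{L^{4}}$ (obtained from $E(u)=E^{c}(W)$ and the Pohozaev identities), it invokes the modulation analysis of Proposition~\ref{Modilation11} to get $\|u(t_n)\|^{4}_{L^{4}}\lesssim \delta(t_n)^{2}$ when $\delta(t_n)$ is small, and derives a contradiction. You instead prove the bound directly, for every $t$ at once: from the sharp Sobolev inequality \eqref{GI} and Pohozaev \eqref{PoQ} you get $G[u]\geq \|\nabla u\|^{2}_{L^2}(\|\nabla W\|^{4}_{L^2}-\|\nabla u\|^{4}_{L^2})/\|\nabla W\|^{4}_{L^2}$, factor out a $\delta(t)$, and close using a uniform positive lower bound on $\|\nabla u(t)\|^{2}_{L^2}$ which you extract from $E(u(t))=E^c(W)>0$, mass conservation, and Gagliardo--Nirenberg. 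Your argument is more elementary (it needs neither the modulation analysis of Section~\ref{S:Modulation} nor any compactness-flavored contradiction), produces an explicit constant $c=8c_1/\|\nabla W\|^2_{L^2}$, and incidentally makes rigorous a point the paper's proof leaves implicit: in the paper's contradiction step, $F^{c}_{\infty}[u(t_n)]\to 0$ together with \eqref{IGU} a priori allows either $\|\nabla u(t_n)\|_{L^2}\to\|\nabla W\|_{L^2}$ or $\|\nabla u(t_n)\|_{L^2}\to 0$; your first step is exactly what rules out the second possibility. In short: the proposal is correct, self-contained, and simpler than the paper's argument.
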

\begin{proof}
We recall that $E^{c}(W)=\frac{1}{3}\|\nabla W\|^{2}_{L^{2}}$.  By using $E(u)=E^{c}(W)$ we have 
\begin{equation}\label{Pohi22}
{{F^{c}_{\infty}}}[u(t)]=16\delta(t)-12\|u(t)\|^{4}_{L^{4}}.
\end{equation}
We claim that ${{F^{c}_{\infty}}}[u(t)]>0$ for all $t\geq 0$. Indeed, by using sharp Sobolev inequality \eqref{GI} and \eqref{PositiveP} 
 we deduce 
\begin{align}\nonumber
\|u\|^{6}_{L^{6}}&\leq  C^{6}_\text{GN}\|\nabla u\|_{L^2}^6
= \frac{1}{\|\nabla W\|^{4}_{L^{2}}}\| \nabla u\|^{6}_{L^{6}}\\ \label{IGU}
&=\( \tfrac{\|\nabla u\|_{L^2}}{\|\nabla W\|_{L^{2}}} \)^{4}\|\nabla u\|^{2}_{L^{2}}
<\|\nabla u\|^{2}_{L^{2}}.
\end{align}

Suppose now that \eqref{InequeN} were false. Then there exists  $\left\{t_{n}\right\}$ such that
\begin{equation}\label{Contra22}
{{F^{c}_{\infty}}}[u(t_{n})]\leq \tfrac{1}{n}\delta(t_{n}).
\end{equation}
We first observe that $\left\{\delta(t_{n})\right\}$ is bounded (cf. Lemma~\ref{GlobalS}). Now, we show that
\[
\delta(t_{n})\to 0 \quad \text{as $n\to\infty$}.
\]
In fact ${{F^{c}_{\infty}}}[u(t_{n})]\to 0$ as $n\to \infty$ by \eqref{Contra22}, so that \eqref{IGU} implies
\begin{align*}
	\|\nabla u(t_n)\|_{L^2}
		\rightarrow \|\nabla W\|_{L^2},
\end{align*}
i.e.  $\delta(t_{n})\to 0$ as $n\to\infty$.  By Proposition \ref{Modilation11} we have
\[
\| u(t_n)\|^{4}_{L^4}\lesssim \delta(t_{n})^{2}\leq \tfrac{1}{12}\delta(t_{n})\quad \text{for $n$ large}.
\]
Thus, by using \eqref{Pohi22} and \eqref{Contra22} we get
\[
15\delta(t_{n})\leq \tfrac{1}{n}\delta(t_{n})\quad \text{for $n$ large},
\]
which is a contradiction because $\delta(t)>0$ for all $t\geq 0$.
\end{proof}

Now we show the existence of a sequence $t_n\to\infty$ so that $\delta(t_n)\to 0$.

\begin{proposition}\label{BoundedUN}
There exists a sequence $t_{n}\to\infty$ such that
\[
\lim_{n\to \infty}\delta(t_{n})=0.
\]
\end{proposition}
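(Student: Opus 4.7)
I propose to argue by contradiction: suppose there exist $c_0>0$ and $T_0\geq 0$ with $\delta(t)\geq c_0$ for all $t\geq T_0$.  The plan is to combine the precompactness \eqref{CompacNew} with the localized virial identity of Lemma~\ref{VirialIden} to show that $\tfrac{d}{dt}I_R[u(t)]$ is bounded below by a positive constant uniformly for $t\geq T_0$, while $|I_R[u(t)]|$ is itself uniformly bounded by a multiple of $R$.  Integration then immediately yields the contradiction.  To translate compactness at the moving scale $\lambda(t)$ into uniform decay at a fixed spatial scale, the preliminary step is to pinch $\lambda(t)$ into a compact subinterval of $(0,\infty)$ for $t\geq T_0$.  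The upper bound $\lambda(t)\leq \Lambda_0$ is immediate from Lemma~\ref{DeltaZero}, since $\lambda(t_n)\to\infty$ along any $t_n\to\infty$ would force $\delta(t_n)\to 0$.

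I expect the matching lower bound $\lambda(t)\geq \lambda_{\min}>0$ to be the main technical obstacle.  To obtain it, I would suppose $\lambda(t_n)\to 0$ along some $t_n\to\infty$ and use \eqref{CompacNew} to pass (after a subsequence) to a strong $\dot H^1$ limit $v_\infty$ of the rescaled profiles $v_n(x):=\lambda(t_n)^{-1/2}u(t_n,x/\lambda(t_n))$.  A change of variables gives
\[
\|v_n\|_{L^2}^{2}=2\lambda(t_n)^{2}M(u_0)\longrightarrow 0,
\]
and combining this $L^2$ convergence to $0$ with the $L^6$ convergence $v_n\to v_\infty$ supplied by Sobolev (via a sub-subsequential a.e.\ argument) forces $v_\infty\equiv 0$.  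Thus $\|\nabla u(t_n)\|_{L^2}\to 0$, which by Sobolev and interpolation yields $\|u(t_n)\|_{L^4}+\|u(t_n)\|_{L^6}\to 0$, so $E(u(t_n))\to 0$---contradicting $E(u)=E^c(W)>0$.

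With $\lambda(t)\in[\lambda_{\min},\Lambda_0]$ on $[T_0,\infty)$, the precompactness at scale $\lambda(t)$ translates to uniform-in-$t$ decay at any fixed large spatial scale.  Expanding $F_R[u(t)]$ using $w_R(x)=|x|^2$ for $|x|\leq R$ together with $|\partial^\alpha w_R|\lesssim R^{2-|\alpha|}$, the plan is to show
\[
F_R[u(t)]=F^c_\infty[u(t)]+6\!\int_{|x|\leq R}|u(t,x)|^4\,dx+o_R(1),
\]
with $o_R(1)\to 0$ as $R\to\infty$ uniformly in $t\geq T_0$ (mass conservation absorbs the $|u|^2/|x|^2$ tail via a $1/R^2$ bound, interpolation between $L^2$ and $L^6$ the $|u|^4$ tail).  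Lemma~\ref{Compa} then gives $F_R[u(t)]\geq c\delta(t)-o_R(1)\geq \tfrac{1}{2}cc_0>0$ for $R$ sufficiently large and $t\geq T_0$.  Integrating $\tfrac{d}{dt}I_R[u(t)]=F_R[u(t)]$ over $[T_0,T]$ produces $I_R[u(T)]-I_R[u(T_0)]\gtrsim T-T_0\to\infty$, which is incompatible with the uniform bound $|I_R[u(t)]|\lesssim R\,M(u_0)^{1/2}\|\nabla W\|_{L^2}$ following from Cauchy--Schwarz and \eqref{PositiveP}.  This contradiction yields the desired sequence $t_n\to\infty$ with $\delta(t_n)\to 0$.
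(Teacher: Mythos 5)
Your argument is correct and uses essentially the same ingredients as the paper's proof: the lower bound $\lambda(t)\gtrsim 1$ obtained from mass conservation and the $L^2$-rescaling argument, the uniform spatial tail decay supplied by precompactness, Lemma~\ref{Compa}'s coercivity $F^{c}_{\infty}[u(t)]\geq c\,\delta(t)$, and the localized virial identity with the $O(R)$ bound on $I_R$. The paper packages these as a direct mean-value estimate $\frac{1}{T}\int_0^T\delta(t)\,dt\lesssim\epsilon + \rho_\epsilon^2/T$ followed by $T\to\infty$ and $\epsilon\to 0$, whereas you argue by contradiction with a fixed lower bound $\delta\geq c_0$; these are logically equivalent presentations of the same estimate.

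One small redundancy worth flagging: the upper bound $\lambda(t)\leq\Lambda_0$ is not actually needed. To convert precompactness at the moving scale $\lambda(t)$ into uniform-in-$t$ decay past a \emph{fixed} radius, only the lower bound $\lambda(t)\geq\lambda_{\min}$ is used, since it gives $\rho_\epsilon/\lambda(t)\leq\rho_\epsilon/\lambda_{\min}$ and hence $\{|x|>\rho_\epsilon/\lambda_{\min}\}\subset\{|x|>\rho_\epsilon/\lambda(t)\}$ for all $t$; the upper bound plays no role in the subsequent virial computation. Moreover, calling the upper bound "immediate" from Lemma~\ref{DeltaZero} is a slight overstatement, since that lemma only treats sequences $t_n\to\infty$, and ruling out $\lambda(t_n)\to\infty$ along \emph{bounded} $t_n$ requires a separate continuity-of-flow argument (compare Step~1 of Proposition~\ref{Spatialcenter}). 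Since the upper bound is superfluous, this is harmless, but dropping it streamlines the proof.
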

\begin{proof} First we show that there exists $c>0$ such that
\begin{align}\label{Bla}
	\lambda(t)\geq c\qtq{for all $t\geq 0$.}
	\end{align}
Indeed, if \eqref{Bla} is false, there exists a sequence $\left\{t_{n}\right\}$ such that $\lambda(t_{n})\to 0$. 
From compactness in $\dot{H}^{1}(\R^{3})$ (cf. \eqref{CompacNew}) we deduce that there exists $\phi\in \dot{H}^{1}(\R^{3})$
so that
\begin{align}\label{CopC}
	u_{\lambda_{n}}(x):=\lambda(t_{n})^{-\frac{1}{2}}u(t_{n}, \tfrac{x}{\lambda(t_{n})})\to \phi \qtq{in $\dot{H}^{1}(\R^{3})$.}
\end{align}
Notice also that
\begin{align}\label{CNl}
	\|u_{\lambda_{n}}\|^{2}_{L^{2}}=\lambda(t_{n})^{2}\|u_{0}\|^{2}_{L^{2}}\to 0 \qtq{as $n \to \infty$.}
\end{align}
In particular, the sequence $\left\{u_{\lambda_{n}}\right\}$ is bounded in ${H}^{1}(\R^{3})$, $u_{\lambda_{n}}\rightharpoonup \phi$ in 
${H}^{1}(\R^{3})$ and 
\[
\|\phi\|^{2}_{L^{2}}\leq \liminf_{n\to \infty}\|u_{\lambda_{n}}\|^{2}_{L^{2}}=0,
\]
which yields $\phi\equiv 0$. But then, by \eqref{CopC} we deduce that 
$\|u(t_{n})\|^{2}_{\dot{H}^{1}}=\|u_{\lambda_{n}}\|^{2}_{\dot{H}^{1}}\to 0$. As $\|u(t_{n})\|^{2}_{L^{2}}=\|u_{0}\|^{2}_{L^{2}}$, it follows that
$\|u(t_{n})\|^{4}_{L^{4}}\to 0$ and 
\[
E^{c}(W)=E(u_{0})=\lim_{n\to \infty} E(u(t_{n}))=0,
\]
a contradiction. Thus \eqref{Bla} holds.

Next, given $T>0$ and $\epsilon>0$, we will prove that there exists $\rho_{\epsilon}>0$ so that
\begin{equation}\label{NewVirial}
\tfrac{1}{T}\int^{T}_{0}\delta(t)\,dt\lesssim\epsilon
+\tfrac{1}{T}\big(\sup_{t\in [0,T]}\tfrac{\rho_{\epsilon}}{\lambda(t)}\big)^{2}\|u\|^{2}_{L^{\infty}_{t}\dot{H}^{1}_{x}}.
\end{equation}
To prove this, we fix $R>0$, which will be determined later. Since $|\nabla w_{R}| \lesssim R^2|x|^{-1}$, 
\[
|I_{R}[{u}](t)|\lesssim R^{2}\|u\|^{2}_{L^{\infty}_{t}\dot{H}^{1}_{x}}
\]
(cf. Lemma~\ref{VirialIden}). From  Lemmas~\ref{VirialIden} and \ref{Compa}, we can write 
\begin{equation}\label{DecompV}
\tfrac{d}{dt}I_{R}[u(t)]=F^{c}_{\infty}[u(t)]+(F_{R}[u(t)]-F^{c}_{\infty}[u(t)])\geq c\delta(t)+(F_{R}[u(t)]-F^{c}_{\infty}[u(t)]).
\end{equation}
We now write
\begin{align}\label{F11}
F_{R}[u(t)]-F^{c}_{\infty}[{u}(t)]&=\int_{|x|\geq R}\big[(- \Delta \Delta w_{R})|u|^{2}
+4\RE \overline{u_{j}} u_{k} \partial_{jk}[w_{R}]-8|\nabla u|^{2}\big]dx\\ \label{F22}
&-\tfrac{4}{3}\int_{|x|\geq R}\Delta[w_{R}(x)]|u|^{6}dx
+8\int_{|x|\geq R}|u|^{6}dx
\\ \label{F33}
&+\int_{\R^{3}}\Delta[w_{R}(x)]|u|^{4}dx.
\end{align}
By compactness in $\dot{H}^{1}(\R^{3})$ (see \eqref{CompacNew}) and conservation of mass, we deduce that
\begin{align}\label{epeq}
	\sup_{t\in \R}\int_{|x|>\tfrac{\rho_{\epsilon}}{\lambda(t)}}\big[|\nabla u|^{2} +|u|^{6}+|u|^{4} +\tfrac{|u|^{2}}{|x|^{2}}\big](t,x)dx  \ll \epsilon
\end{align}
for large enough $\rho_\eps>0$.  Given $T>0$, we now choose
\[
R:=\sup_{t\in [0,T]}\frac{\rho_{\epsilon}}{\lambda(t)},\qtq{so that}
\left\{|x|\geq R\right\}\subset \left\{|x|\geq \tfrac{\rho_{\epsilon}}{\lambda(t)}\right\}\qtq{for all}t\in[0,T].
\]
It follows that
\begin{equation}\label{smallI}
|\eqref{F11}|+|\eqref{F11}|<\epsilon \qtq{for all $t\in [0, T]$.}
\end{equation}
Next, for \eqref{F33}, we use \eqref{epeq} and the fact that $|\Delta[w_{R}](x)|\lesssim 1$ to get
\[
\eqref{F33}
=6\int_{|x|\leq R}|u|^{4}dx+\int_{|x|\geq R}\Delta[w_{R}(x)]|u|^{4}dx\geq -\epsilon
 \qtq{for all $t\in [0, T]$.}
\]
Integrating \eqref{DecompV} on $[0, T]$ and applying \eqref{smallI} and the inequality above now yields
\[
\int^{T}_{0}\delta(t) \,dt\lesssim \big(\sup_{t\in [0,T]}\tfrac{\rho_{\epsilon}}{\lambda(t)}\big)^{2}\|u\|^{2}_{L^{\infty}_{t}\dot{H}^{1}_{x}}+\epsilon T.
\]
Recalling the lower bound on $\lambda$,  we deduce that
\[
\tfrac{1}{T}\int^{T}_{0}\delta(t)dt\lesssim\epsilon
+\tfrac{\rho^{2}_{\epsilon}}{T}.
\]
As $\eps>0$ was arbitrary, we can therefore find sequences $T_n\to\infty$ and $t_{n}\in [\frac{1}{2}T_{n}, T_{n}]$ so that $\delta(t_{n})\to 0$  as $n\to \infty$.
\end{proof}

In order to capitalize on the preceding proposition, we will now prove a sharpened virial estimate in which the right-hand side is controlled by $\delta(\cdot)$.

\begin{lemma}\label{Lemma111}
For $\delta_{1}\in (0, \delta_{0})$ sufficiently small, there exists a constant $C=C(\delta_{1})>0$ such that for any interval $[t_{1}, t_{2}]\subset [0, \infty)$,
\begin{equation}\label{BoundT11}
\int^{t_{2}}_{t_{1}}\delta(t)\,dt\leq C\sup_{t\in[t_{1},t_{2}]}\tfrac{1}{\lambda(t)^{2}}
\left\{\delta(t_{1})+\delta(t_{2})\right\}.
\end{equation}
\end{lemma}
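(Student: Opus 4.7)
The plan is to run a localized, modulated virial argument along the lines of Lemma~\ref{LemmaB22}, but now exploiting the lower bound $F^c_{\infty}[u(t)]\geq c\,\delta(t)$ from Lemma~\ref{Compa} (valid in the scattering regime) together with the compactness property \eqref{CompacNew}. The key difference from the blowup setting is that the scale of the virial cutoff must be tied to $\sup_{t\in[t_1,t_2]}\lambda(t)^{-1}$, which is precisely what produces the factor $\sup_{t\in[t_1,t_2]}\lambda(t)^{-2}$ on the right-hand side of \eqref{BoundT11}.

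First, fix $\delta_1\in(0,\delta_0)$ and let $\chi(t)$ be the indicator of $\{\delta(t)<\delta_1\}$, as in the proof of Lemma~\ref{LemmaB22}. Given $[t_1,t_2]\subset[0,\infty)$, pick a large parameter $\rho_\epsilon>0$ (to be chosen from the compactness bound \eqref{epeq}) and set
\[
R = \rho_\epsilon \sup_{t\in[t_1,t_2]} \tfrac{1}{\lambda(t)},
\]
which is finite thanks to the lower bound \eqref{Bla}. For every $t\in[t_1,t_2]$ one then has $R\,\lambda(t)\geq \rho_\epsilon$, so the support condition $\{|x|\geq R\}\subset \{|x|\geq \rho_\epsilon/\lambda(t)\}$ lets us invoke \eqref{epeq}. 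Applying Lemma~\ref{VirialModulate} with this choice of $\chi$ and writing the remainder as $\mathcal{E}(t)$ (exactly as in \eqref{Error11}--\eqref{Error22}), one obtains
\[
\tfrac{d}{dt}I_R[u(t)] \geq c\,\delta(t) + \mathcal{E}(t).
\]

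The main technical step is the lower bound $\mathcal{E}(t)\geq -\tfrac{c}{2}\delta(t)$ for all $t\in[t_1,t_2]$, which I would establish exactly as in Lemma~\ref{LemmaB22}, splitting the cases $\delta(t)\geq\delta_1$ and $\delta(t)<\delta_1$. In the first case, $\chi(t)=0$ and the difference $F_R[u]-F^c_\infty[u]$ reduces to integrals over $|x|\geq R$ together with $\int\Delta w_R\,|u|^4\,dx$; by compactness \eqref{epeq} each of these can be made smaller than any prescribed $\epsilon$, and then $\epsilon \leq \tfrac{c}{2}\delta_1\leq \tfrac{c}{2}\delta(t)$ follows by choosing $\rho_\epsilon$ large enough. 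In the second case, Proposition~\ref{Modilation11} applies, and we use the decomposition $u=e^{i\theta}[W_{[\mu(t)]}+g]$ with $\|g\|_{\dot H^1}\sim\delta(t)$ to control each piece of $\mathcal{E}(t)$ by $\delta(t)$ times a quantity that is small provided $\delta_1\ll 1$ and $R\lambda(t)\gg 1$; the quartic contribution $\int\Delta w_R|u|^4\,dx$ is then absorbed using \eqref{DeltaBound}. This is the step I expect to be the most delicate, as it essentially repeats the entire virial-error analysis of Lemma~\ref{LemmaB22} with the opposite sign convention.

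Once $\tfrac{d}{dt}I_R[u(t)]\geq \tfrac{c}{2}\delta(t)$ is in hand, I integrate over $[t_1,t_2]$ to get
\[
\tfrac{c}{2}\int_{t_1}^{t_2}\delta(t)\,dt \leq I_R[u(t_2)] - I_R[u(t_1)] \leq |I_R[u(t_1)]| + |I_R[u(t_2)]|.
\]
The endpoint bound $|I_R[u(t_i)]|\leq C(\delta_1)R^2\,\delta(t_i)$ follows exactly as in Lemma~\ref{Lemma11}: on $\{\delta<\delta_0\}$ we replace $u$ by its modulation decomposition and use \eqref{Estimatemodu}, while on $\{\delta\geq\delta_1\}$ we use the trivial bound $|I_R[u]|\lesssim R^2\|\nabla u\|_{L^2}^2\leq C\delta_1^{-1}\|\nabla W\|_{L^2}^2\,R^2\delta(t_i)$. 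Combining this with the definition of $R$, we conclude
\[
\int_{t_1}^{t_2}\delta(t)\,dt \leq C(\delta_1)\,\rho_\epsilon^2 \sup_{t\in[t_1,t_2]}\tfrac{1}{\lambda(t)^2}\bigl\{\delta(t_1)+\delta(t_2)\bigr\},
\]
which is precisely \eqref{BoundT11}.
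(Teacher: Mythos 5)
Your proposal is correct and follows essentially the same route as the paper: both choose $R=\rho_\epsilon\sup_{[t_1,t_2]}\lambda(t)^{-1}$, use Lemma~\ref{VirialModulate} with the indicator cutoff $\chi$ together with the lower bound $F^c_\infty[u]\geq c\,\delta(t)$ from Lemma~\ref{Compa}, bound the endpoint terms $|I_R[u(t_j)]|\lesssim_{\delta_1} R^2\delta(t_j)$ exactly as in Lemma~\ref{Lemma11}, and estimate the error $\mathcal E(t)$ via the compactness bound \eqref{epeq} and the modulation decomposition. The only cosmetic difference is that the paper keeps $\epsilon$ and $\delta_1$ free until the end and absorbs $\big(\tfrac{\epsilon}{\delta_1}+\epsilon+\delta_1\big)\int\delta\,dt$ into the left side, whereas you absorb the error pointwise by requiring $\mathcal E(t)\geq-\tfrac{c}{2}\delta(t)$; these are equivalent.
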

\begin{proof}
Fix  $\delta_{1}\in (0, \delta_{0})$ and $R>1$ to be determined below. We use the localized virial identities (cf. Lemma~\ref{VirialModulate}) with $\chi(t)$ satisfying
\[
\chi(t)=
\begin{cases}
1& \quad \delta(t)<\delta_{1} \\
0& \quad \delta(t)\geq \delta_{1}.
\end{cases}
\]
From  Lemmas~\ref{VirialModulate} and \ref{Compa} we  deduce
\begin{equation}\label{VirilaX11}
\tfrac{d}{dt}I_{R}[u(t)]=F^{c}_{\infty}[u(t)]+\EE(t)\geq c\delta(t)+\EE(t)
\end{equation}
with
\begin{equation}\label{Error111}
\EE(t)=
\begin{cases}
F_{R}[u(t)]-F^{c}_{\infty}[u(t)]& \quad  \text{if $\delta(t)\geq \delta_{1}$}, \\
F_{R}[u(t)]-F^{c}_{\infty}[u(t)]-\K[t]& \quad \text{if $\delta(t)< \delta_{1}$},
\end{cases}
\end{equation}
where
\begin{equation}\label{Error222}
\K(t)=F^{c}_{R}[e^{ i \theta(t)}\lambda(t)^{\frac{1}{2}}W(\lambda(t)x)]
-F^{c}_{\infty}[e^{ i \theta(t)}\lambda(t)^{\frac{1}{2}}W(\lambda(t)x)].
\end{equation}

We now make two claims that together will imply the desired result. 

\textbf{{Claim I.} }For $R>1$,  we have
\begin{align}\label{EstimateV111}
	|I_{R}[u(t_{j})]|\lesssim \tfrac{R^{2}}{\delta_{1}}\delta(t_{j}) \quad& \text{if $\delta(t_{j})\geq \delta_{1}$ for $j=1$, $2$},\\
	\label{EstimateV221}
	|I_{R}[u(t_{j})]|\lesssim R^{2} \delta(t_{j}) \quad &\text{if $\delta(t_{j})< \delta_{1}$ for $j=1$, $2$}.
\end{align}

\textbf{{Claim II.}} For $\epsilon>0$, there exists $\rho_{\epsilon}=\rho(\epsilon)>0$ so that if
\begin{equation}\label{J-defR}
R:=\rho_{\epsilon}\sup_{t\in [t_{1}, t_{2}]}\tfrac{1}{\lambda(t)},
\end{equation}
then
\begin{align}\label{EstimateE111}
\EE(t)\geq-\epsilon\geq- \tfrac{\epsilon}{\delta_{1}}\delta(t) \quad &\text{uniformly for $t\in [t_{1}, t_{2}]$ such that
	$\delta(t)\geq \delta_{1}$},\\
\label{EstimateE221}
|\EE(t)|\leq (\epsilon+\delta_{1}) \delta(t)\quad &\text{uniformly for $t\in [t_{1}, t_{2}]$ such that $\delta(t)< \delta_{1}$}.
\end{align}

Integrating inequality \eqref{VirilaX11} on $[t_{1}, t_{2}]$ and applying  the estimates \eqref{EstimateV111}, \eqref{EstimateV221}, \eqref{EstimateE111} and \eqref{EstimateE221} we deduce
\[
\int^{t_{2}}_{t_{1}}\delta(t)\,dt\lesssim 
\tfrac{\rho^{2}_{\epsilon}}{\delta^{2}_{1}}\sup_{t\in [t_{1}, t_{2}]}\tfrac{1}{\lambda(t)^{2}}(\delta(t_{1})+\delta(t_{2}))
+(\tfrac{\epsilon}{\delta_{1}}+\epsilon+\delta_{1})\int^{t_{2}}_{t_{1}}\delta(t)\,dt.
\]
For $\delta_{1}\in (0, \delta_{0})$ small, and  choosing $\epsilon=\epsilon(\delta_{1})$ sufficiently small yields \eqref{BoundT11}. 
\end{proof}

Thus it remains to establish the above claims.
\begin{proof}[{Proof of Claim I}] Assume that $\delta(t_{j})\geq \delta_{1}$. Then
\[
|I_{R}[u(t_{j})]|
\lesssim R^{2}\|u\|^{2}_{L^{\infty}_{t}\dot{H}^{1}}\lesssim_{W} \tfrac{R^{2}}{\delta_{1}}\delta(t_{j}).
\]
This proves \eqref{EstimateV111}. 

We now write $W_{[\lambda(t)]}:=\lambda(t)^{\frac{1}{2}}W(\lambda(t)\cdot)$. If  $\delta(t_{j})< \delta_{1}$, then by using the fact that $W$ is real we deduce
\begin{align*}
|I_{R}[u(t_{j})]|&\leq \left|2\IM\int_{\R^{3}}\nabla w_{R}(\overline{u} \nabla u-e^{-i\theta(t_{j})}W_{[\lambda(t_{j})]} 
\nabla [ e^{i\theta(t_{j})}W_{[\lambda(t_{j})]}])
dx\right|\\
&\lesssim
R^{2}[\|u\|_{L^{\infty}_{t}\dot{H}^{1}_{x}}+\|W\|_{\dot{H}^{1}}]
\|u(t_{j})-e^{i\theta (t_{j})}W_{[\lambda(t_{j})]}\|_{\dot{H}^{1}}\\
&\lesssim_{W}R^{2} \|g(t_{j})\|_{\dot{H}^{1}}   \lesssim_{W}R^{2} \delta(t_{j}),
\end{align*}
where in the last inequality we have used estimate \eqref{Estimatemodu}.
\end{proof}

\begin{proof}[{Proof of Claim II}]
First, assume $\delta(t)\geq \delta_{1}$. By compactness \eqref{CompacNew}, we have that for each $\epsilon>0$, there exists 
$\rho_{\epsilon}=\rho (\epsilon)>0$  such that
\begin{equation}\label{CompactAgain2}
	\sup_{t\in \R}\int_{|x|>\tfrac{\rho_{\epsilon}}{\lambda(t)}}[|\nabla u|^{2} +|u|^{6}+|u|^{4} +\tfrac{|u|^{2}}{|x|^{2}} ](t,x)dx  \ll \epsilon.
\end{equation}
Defining $R$ as in \eqref{J-defR} and using the same argument developed above in Step 2 of Lemma~\ref{BoundedUN}, we deduce that
\[
\EE(t)= F_{R}[u(t)]-F_{\infty}[u(t)]\geq -\epsilon\geq -\tfrac{\epsilon}{\delta_{1}} \delta(t)
\quad \text{for every $t\in [t_{1}, t_{2}]$ with $\delta(t)\geq \delta_{1}$},
\]
which yields \eqref{EstimateE111}.

Now we consider the case  $\delta(t)< \delta_{1}$. To simplify notation, we set  $W(t)=e^{i\theta(t)}\lambda(t)^{\frac{1}{2}}W(\lambda(t)x)$. For $t\in[t_1,t_2]$ with $\delta(t)<\delta_1$, we may write 
\begin{align}\label{Decomp111}
&\EE(t)\nonumber\\
&=-8\int_{|x|\geq R}[(|\nabla u|^{2}-(|\nabla W(t)|^{2})]\,dx
\quad+8\int_{|x|\geq R}[|u|^{6}-|W(t)|^{6}]\,dx\\\label{Decomp331}
&\quad+\int_{|x|\geq R}(-\Delta \Delta w_{R})[|u|^{2}-|W(t)|^{2}]\,dx
-\tfrac{4}{3}\int_{|x|\geq R}\Delta[w_{R}(x)](|u|^{6}-|W(t)|^{6})\,dx\\\label{Decomp551}
&\quad+4\RE\int_{|x|\geq R}[\overline{u_{j}} u_{k}-\overline{\partial_{j}W(t)} \partial_{k}W(t)]\partial_{jk}[w_{R}(x)]dx\\ \label{Decomp661}
&\quad +\int_{\R^{3}}\Delta[w_{R}(x)]|u|^{4}\,dx.
\end{align}

As $|\Delta \Delta w_{R}|\lesssim 1/|x|^{2}$, $|\partial_{jk}[w_{R}]|\lesssim 1$ and $|\Delta[w_{R}]|\lesssim 1$,  we see that \eqref{Decomp111}--\eqref{Decomp551} can be estimated by terms of the form 
\begin{align*}
&\bigl\{\|u(t)\|_{\dot{H}_{x}^{1}(|x|\geq R)}+\|W(t)\|_{\dot{H}_{x}^{1}(|x|\geq R)} +
\|u(t)\|^{5}_{L_{x}^{6}(|x|\geq R)}\\
& \quad+\|W(t)\|^{5}_{L_{x}^{6}(|x|\geq R)}\bigr\}\|g(t)\|_{\dot{H}_{x}^{1}}+\bigl\{\|\tfrac{u(t)}{|x|}\|_{L^{2}(|x|\geq R)}+\|\tfrac{W(t)}{|x|}\|_{L^{2}(|x|\geq R)}\bigr\}\|g(t)\|_{\dot{H}_{x}^{1}},
\end{align*}
where $ g(t)=e^{-i\theta(t)}[{u}(t)-W(t)]$. Moreover, since 
\[
\|W(t)\|_{\dot{H}_{x}^{1}(|x|\geq R)} \sim \tfrac{1}{R^{1/2}}\qtq{and}\|W(t)\|_{L_{x}^{6}(|x|\geq R)} \sim \tfrac{1}{R^{1/2}},
\]
it follows from \eqref{CompactAgain2} and \eqref{Estimatemodu} that
\[
|\eqref{Decomp111}|+|\eqref{Decomp331}|+|\eqref{Decomp551}|\lesssim \epsilon \delta(t).
\]
Finally, by Proposition~\ref{Modilation11} we see that
\[
\left|\int_{\R^{3}}\Delta[w_{R}(x)]|u|^{4}dx\right|\lesssim \delta(t)^{2}\lesssim \delta_{1} \delta(t).
\]
Putting together the estimates above yields \eqref{EstimateE221}.\end{proof}

\begin{remark}\label{GrwI}
There exists a constant $C>0$ so that for $t\geq 0$,
\begin{align}\label{BdeC}
\int^{+\infty}_{t}\delta(s)\,ds\leq Ce^{-ct}.
\end{align}
Indeed, by Step 1 of Proposition~\ref{BoundedUN} we see that there exists $c>0$ so that $\lambda(t)\geq c$ for all $t\geq 0$.
Thus, from Lemma~\ref{Lemma111} we deduce that there exists $C>0$ so that
\[
\int^{t_{n}}_{t}\delta(s)\,ds \leq C[\delta(t)+\delta(t_{n})],
\]
where a sequence $\left\{t_{n}\right\}$ is given in Proposition~\ref{BoundedUN}. Let $n\to +\infty$. Taking the limit as $n\to\infty$, we then obtain \eqref{BdeC} from Gronwall's inequality.\end{remark}

We next show that we may use $\delta$ to control the variation of the spatial scale.

\begin{proposition}\label{Spatialcenter}
Let $[t_{1}, t_{2}]\subset(0, \infty)$ with $t_{1}+\tfrac{1}{\lambda(t_{1})^{2}}< t_{2}$. Then
\begin{equation}\label{BoundCenter1}
\left|\tfrac{1}{\lambda(t_{2})^{2}}-\tfrac{1}{\lambda(t_{1})^{2}}\right|\leq \widetilde{C}\int^{t_{2}}_{t_{1}}\delta(t)\,dt.
\end{equation}
\end{proposition}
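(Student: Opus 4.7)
The plan is to split $[t_{1},t_{2}]$ according to whether $t\in I_{0}$ (where the modulation machinery applies) or $t\in I_{0}^{c}=\{t:\delta(t)\geq\delta_{0}\}$ (where compactness forces $\lambda$ to be essentially constant), and to control each contribution to the variation of $\lambda^{-2}$ separately.

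On $I_{0}\cap[t_{1},t_{2}]$, where $\lambda=\mu$, the modulation estimate \eqref{EstimLaD} of Proposition~\ref{Modilation11} rewrites as
\[
\left|\tfrac{d}{dt}\tfrac{1}{\lambda(t)^{2}}\right|=\tfrac{2}{\lambda(t)^{2}}\left|\tfrac{\mu'(t)}{\mu(t)}\right|\lesssim\delta(t),
\]
so the variation of $\lambda^{-2}$ over $I_{0}\cap[t_{1},t_{2}]$ is controlled by $\int_{I_{0}\cap[t_{1},t_{2}]}\delta(t)\,dt\leq\int_{t_{1}}^{t_{2}}\delta(t)\,dt$. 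This is the easy part, and it already closes the argument when $[t_{1},t_{2}]\subset I_{0}$.

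For $I_{0}^{c}\cap[t_{1},t_{2}]$, I would first establish $\lambda\sim 1$ uniformly there. The lower bound $\lambda\geq c>0$ was proved in Step~1 of Proposition~\ref{BoundedUN}; for the upper bound, if $\lambda(t_{n})\to\infty$ along a sequence in $I_{0}^{c}$, then necessarily $t_{n}\to\infty$ (by continuity and local boundedness of $\lambda$), and Lemma~\ref{DeltaZero} would force $\delta(t_{n})\to 0$, contradicting $\delta(t_{n})\geq\delta_{0}$. Next, by compactness \eqref{CompacNew} combined with local well-posedness applied to the rescaled solutions $\lambda(t_{0})^{-1/2}u(t_{0}+s/\lambda(t_{0})^{2},y/\lambda(t_{0}))$, the function $\lambda$ varies by at most a bounded multiplicative factor over original-time intervals of length $c_{0}/\lambda(t_{0})^{2}\sim c_{0}$. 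Covering $I_{0}^{c}\cap[t_{1},t_{2}]$ by such intervals yields a bound on the variation of $\lambda^{-2}$ of the form $C|I_{0}^{c}\cap[t_{1},t_{2}]|$, which by $\delta\geq\delta_{0}$ is in turn bounded by $(C/\delta_{0})\int_{I_{0}^{c}\cap[t_{1},t_{2}]}\delta(t)\,dt$.

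Summing both contributions, using continuity of $\lambda$ at the boundary of $I_{0}$, and invoking the hypothesis $t_{2}-t_{1}>1/\lambda(t_{1})^{2}$ (to rule out degenerate cases where the interval is too short at the natural scale of $\lambda(t_{1})$, so that $\int_{t_{1}}^{t_{2}}\delta\gtrsim 1$ when the bad regime dominates near $t_{1}$) yields \eqref{BoundCenter1}. The main obstacle I anticipate is the variation bound on $I_{0}^{c}$: since $I_{0}^{c}\cap[t_{1},t_{2}]$ may decompose into many components, one must carefully verify that the natural time scale $1/\lambda^{2}\sim 1$ on $I_{0}^{c}$ is consistent with the well-posedness window, and that the piecewise definition of $\lambda$ (as $\mu$ on $I_{0}$ and $\lambda_{0}$ on $I_{0}^{c}$) does not introduce spurious jumps beyond the bounded multiplicative factor guaranteed by Lemma~\ref{Parametrization}.
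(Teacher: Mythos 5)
Your proposal identifies the right ingredients — the modulation estimate \eqref{EstimLaD} on $I_0$, the lower bound $\lambda\gtrsim 1$, comparability of $\lambda$ over natural time windows via compactness/local theory, and the need to beware the piecewise definition of $\lambda$ — but the way you combine them leaves a genuine gap, which the paper's proof closes with a lemma you do not have.

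The problem is precisely the one you flag at the end and then wave away. You split $[t_1,t_2]$ into the set $I_0 = \{\delta<\delta_0\}$ and its complement, control the variation of $\lambda^{-2}$ on each piece, and add. But $\lambda$ is defined piecewise ($\mu$ on $I_0$, $\lambda_0$ off $I_0$) and Lemma~\ref{Parametrization} only gives \emph{comparability} up to a fixed constant $C$ at the interface; it gives no continuity. Since $I_0$ is open, $I_0^c\cap[t_1,t_2]$ can have arbitrarily many connected components, and each boundary crossing costs you a jump of size $\sim \lambda^{-2}$, which does not a priori integrate against $\delta$. Your bound on $I_0^c$ (``variation $\lesssim |I_0^c\cap[t_1,t_2]| \lesssim \delta_0^{-1}\int\delta$'') controls the \emph{length} of the bad set, not the number of components, so it cannot absorb an accumulation of jumps. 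Without an additional structural input, the two pieces cannot be glued.

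The paper's proof avoids this by never decomposing according to $I_0$ versus $I_0^c$ at the outset. Instead it (Step~1) establishes the comparability bound $\lambda(s)/\lambda(t)+\lambda(t)/\lambda(s)\le C_1$ for $|t-s|\le 1/\lambda(s)^2$, then proves the crucial \emph{dichotomy} (Step~2): there is a $\delta_1>0$ such that on every natural-scale window $[T,T+1/\lambda(T)^2]$, either $\delta\ge\delta_1$ throughout or $\delta<\delta_0$ throughout — there are no ``mixed'' windows. This is exactly what precludes rapid alternation between the two regimes and hence the accumulation of interface jumps. With the dichotomy in hand, Step~3 gives a single-window estimate $|\lambda(t_2')^{-2}-\lambda(t_1')^{-2}|\le C\int_{t_1}^{t_2}\delta$ using \emph{either} the modulation bound (small-$\delta$ window) \emph{or} the trivial bound forced by $\delta\ge\delta_1$ (large-$\delta$ window), and the global estimate \eqref{BoundCenter1} follows by chaining consecutive windows. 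To repair your argument you would essentially have to prove this no-mixed-window dichotomy, at which point you have reproduced the paper's structure.
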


\begin{proof}
\textsl{Step 1.}  First we show that there exists a constant $C_{1}$ such that
\begin{equation}\label{step11}
\tfrac{\lambda(s)}{\lambda(t)}+\tfrac{\lambda(t)}{\lambda(s)}\leq C_{1} \quad \text{for all $t$, $s\geq 0$ such that $|t-s|\leq \tfrac{1}{\lambda(s)^{2}} $}.
\end{equation}
Suppose instead that there exist two sequences $s_{n}$ and  $t_{n}$  so that
\begin{align}\label{CBound}
|t_{{n}}-s_{n}|\leq \tfrac{1}{\lambda(s_{n})^{2}} \qtq{but}
\tfrac{\lambda(s_{n})}{\lambda(t_{n})}+\tfrac{\lambda(t_{n})}{\lambda(s_{n})}\to \infty.
\end{align}
Passing to a subsequence, one can suppose that
\[
\lim_{n\to \infty}\lambda(s_{n})^{2}(t_{n}-s_{n})=\tau_{0}\in[-1, 1].
\]

Now, since $\lambda(t)\gtrsim 1$ for all $t\geq 0$ (cf. \eqref{Bla}), by \eqref{CBound} we see that
\[
\lambda(s_{n})+\lambda(t_{n}) \to \infty \qtq{as $n\to \infty$.}
\]

\textbf{Scenario I.} Suppose  $\lambda(s_{n}) \to \infty$ as $n\to\infty$. From \eqref{CBound} we deduce $|t_{n}-s_{n}|\to 0$ as $n\to \infty$. Then by the local 
theory for \eqref{NLS} and $H^1$-boundedness we may obtain that 
\[
\|u(t_{n})-u(s_{n})\|_{\dot{H}^{1}}\to 0
\qtq{as $n\to \infty$. }
\]

By a change of variables, this yields
\[
\|\lambda(s_{n})^{-1/2}u(t_{n}, \tfrac{\cdot}{\lambda(s_{n})})-\lambda(s_{n})^{-1/2}u(s_{n}, \tfrac{\cdot}{\lambda(s_{n})})\|_{\dot{H}^{1}}\to 0
\qtq{as $n\to \infty$.}
\]
Now, by compactness, $\psi\neq 0$ so that
\begin{align}\label{CvC}
	\lambda(s_{n})^{-1/2}u(t_{n}, \tfrac{\cdot}{\lambda(s_{n})}) \to \psi 
	\qtq{in $\dot{H}^{1}$ as $n\to \infty$}
\end{align}
along some subsequence. Writing $w_{n}(x):=\lambda(s_{n})^{-1/2}u(t_{n}, \tfrac{\cdot}{\lambda(s_{n})})$, we may apply compactness again to deduce 
\begin{align}\label{2dce}
\bigl(\tfrac{\lambda(s_{n})}{\lambda(t_{n})}\bigr)^{\frac{1}{2}} w_{n}(\tfrac{\lambda(s_{n})}{\lambda(t_{n})}x)=\lambda(t_{n})^{-1/2}u(t_{n}, \tfrac{\cdot}{\lambda(t_{n})})\to \zeta \qtq{in $\dot{H}^{1}$ as $n\to \infty$}
\end{align}
for some $\zeta$, which is necessarily nonzero due to conservation of energy. Finally, by \eqref{2dce} we see that there exists $a,C>0$ so that
\[
\int_{a\,\tfrac{\lambda(s_{n})}{\lambda(t_{n})}\leq |x|\leq 2a\,\tfrac{\lambda(s_{n})}{\lambda(t_{n})}}
|\nabla w_{n}(x)|^{2}dx\geq C\qtq{for all large}n.
\]
However, by \eqref{CvC}, \eqref{CBound} and convergence dominated theorem we deduce that
\[
\int_{a\,\tfrac{\lambda(s_{n})}{\lambda(t_{n})}\leq |x|\leq 2a\,\tfrac{\lambda(s_{n})}{\lambda(t_{n})}}
|\nabla w_{n}(x)|^{2}dx\to 0
\]
as $n\to \infty$, which is a contradiction.

\textbf{Scenario II.} Suppose $\lambda(t_{n})\to \infty$. We will show that $\lambda(s_{n})\to \infty$, as well.

Suppose instead that $\lambda(s_{n})$  is bounded.  By \eqref{CBound} we can assume that there exists $\tau\in \R$ so that
$t_{n}-s_{n}\to \tau$.  Since  $\lambda(s_{n})$  is bounded, by compactness \eqref{CompacNew}, we see that, up to subsequence,  $u(s_{n})\to v_{0}$ in $\dot{H}^{1}$ as $n\to +\infty$ for some $v_{0}\in H^{1}$. By conservation of mass, we also have that $u(s_{n})\to v_{0}$ in $L^{4}$. In particular, $E(v_{0})=E^{c}(W)$. Moreover, by \eqref{IneW} we have that  $\delta(v_{0})\geq c>0$.

From Remark~\ref{FirRe} and Lemma~\ref{GlobalS} we deduce that the solution $v$ to \eqref{NLS} with initial data $v_{0}$ is global and satisfies $\delta(v(t))>0$ for all $t\geq 0$. In addition, Lemma~\ref{stabi} implies that $u(s_{n}+\tau)\to v(\tau)$ in $\dot{H}^{1}$ and so 
$\delta(u(s_{n}+\tau))\geq c_{0}$ for $n$ large. From the local 
theory for \eqref{NLS} we have $\|u(t_{n})-u(s_{n}+\tau)\|_{\dot{H}^{1}}\to 0$
as $n\to +\infty$. This implies $\delta(t_{n})\geq c_{0}$ for some $c_{0}>0$ and all $n$ large.

Now, as $\delta(t_{n})\geq c_{0}$, from Lemma~\ref{DeltaZero}  we deduce that $t_{n} \to t_{\ast}\in [0, \infty)$ along some subsequence.  By continuity of the flow and compactness, we see that
\begin{align}\label{Cqe}
	& u(t_{n}) \to u(t_{\ast})\neq 0 \qtq{ strongly in $H^{1}$,}\\  \label{lmn}
	& \lambda(t_{n})^{-1/2}u(t_{n}, \tfrac{\cdot}{\lambda(t_{n})}) \to \zeta\neq 0
	\qtq{ strongly in $\dot{H}^{1}$.}
\end{align}
As $\lambda(t_{n})\to \infty$, \eqref{Cqe} and \eqref{lmn} together yield a contradiction. Thus $\lambda(s_{n})\to \infty$ as $n\to \infty$ and hence Scenario I implies \eqref{step11}.

\textsl{Step 2.}
There exists $\delta_{1}>0$,  so that for any $T\geq 0$, either
\begin{equation}\label{MinMax}
\inf_{t\in [T, T+\tfrac{1}{\lambda(T)^{2}}]}\delta(t)\geq \delta_{1} \quad \text{or}\quad
\sup_{t\in [T,T+\tfrac{1}{\lambda(T)^{2}}]}\delta(t)<\delta_{0}.
\end{equation}
Suppose instead that there exist $t_{n}^{\ast}\geq 0$ and
$t_{n}$, $t^{\prime}_{n}\in  [t_{n}^{\ast}, t_{n}^{\ast}+\tfrac{1}{\lambda(t_{n}^{\ast})^{2}}]$ so that
\begin{align}\label{ContraStep2}
&\delta(t_{n})\to 0 \quad \text{and}\quad \delta(t^{\prime}_{n})\geq \delta_{0}.
\end{align}

Notice that $t_{n}\to \infty$. Indeed, if $t_{n}\leq C$ for all $n\in \N$, then (up to a  subsequence)  $t_{n}\to a\in [0, \infty)$. By continuity and \eqref{ContraStep2}, we derive that $\delta(a)=0$, which contradicts the fact that  $\delta(t)>0$ for all $t\geq 0$.  Thus, as $t_{n}\to \infty$ and $\delta(t_{n})\to 0$, Lemma~\ref{DeltaZero} implies that
\begin{align}\label{Inll}
\lambda(t_{n})\to \infty  \quad\text{as\ } {n\to \infty}.
\end{align}

On the other hand, note that $\lambda(t^{\prime}_{n})$ is bounded. Indeed, this follows from Lemma~\ref{DeltaZero} and \eqref{ContraStep2} if $t^{\prime}_{n}$ is unbounded, and by Step 1 above if $t^{\prime}_{n}$ is bounded. 

Now, since  $\lambda(t^{\prime}_{n})$ is bounded, Step 1 shows that $\lambda(t_{n}^{\ast})$ is bounded,  which implies (again by Step 1) that $\lambda(t_{n})$ is bounded, contradicting \eqref{Inll}.

\textsl{Step 3.} We show there exists a constant $C>0$ so that
\begin{align}\label{BoundCenter}
0\leq t_{1}\leq t_1'\leq t_2'\leq t_{2}=t_{1}+\tfrac{1}{C^{2}_{1}\lambda(t_{1})^{2}}
\Rightarrow 
\left|\tfrac{1}{\lambda(t_2')^{2}}-\tfrac{1}{\lambda(t_1')^{2}}\right|\leq C\int^{t_{2}}_{t_{1}}\delta(t)\,dt,
\end{align}
for some $C>0$. Here $C_{1}\geq 1$ is the constant given in Step 1.

By Step 2, we can assume that $\sup_{t\in [t_{1}, t_{2}]}\delta(t)<\delta_{0}$ or
$\inf_{t\in [t_{1}, t_{2}]}\delta(t)\geq \delta_{1}$. In the former case, we deduce \eqref{BoundCenter} via the estimate  
\[
\left|\tfrac{\lambda^{\prime}(t)}{\lambda(t)^{3}}\right|\lesssim \delta(t)\qtq{for}\delta(t)<\delta_{0}
\]
(cf. \eqref{EstimLaD}).  In the latter case, we see that $\int^{t_{2}}_{t_{1}}\delta_{1}\,dt\geq \int^{t_{2}}_{t_{1}}\delta(t)\,dt$ and
\[
|t_1'-t_2'|\leq \tfrac{1}{C^{2}_{1}\lambda(t_{1})^{2}}\leq \tfrac{1}{\lambda(t_1')^{2}}.
\]
 Thus, by Step 1 we obtain
\[
\left|\tfrac{1}{\lambda(t_2')^{2}}-\tfrac{1}{\lambda(t_1')^{2}}\right|
\leq\tfrac{2 C^{2}_{1}}{\lambda(t_1')^{2}}
\leq \tfrac{2 C^{4}_{1}}{\lambda({t_{1}})^{2}}
=2C^{5}_{1}|t_{2}-t_{1}|\leq \tfrac{2C^{5}_{1}}{\delta_{1}}\int^{t_{2}}_{t_{1}}\delta(t)dt.
\]
Combining the inequalities above yields \eqref{BoundCenter}.
\end{proof}

We now rule out the infinite-time blowup case:

\begin{proof}[{Proof of Theorem~\ref{Tfinito22}}] Suppose $u$ is a solution as in Proposition~\ref{Compacness11} with $T^{\ast}=\infty$.  In particular, $E(u_{0})=E^{c}(W)$, $\|\nabla u_{0}\|^{2}_{L^{2}}<\|\nabla W\|^{2}_{L^{2}}$ and 
 there exists a function  $0<\lambda(t)<\infty$ for all $t\geq0$, such that the set
 \begin{equation*}
\left\{\lambda(t)^{-\frac{1}{2}}u(t, \tfrac{x}{\lambda(t)}): t\in [0, \infty)\right\} \quad
\text{is pre-compact in $\dot{H}^{1}(\R^{3})$}.
\end{equation*}

From Proposition~\ref{BoundedUN}, there exists an increasing sequence $t_{n}\to\infty$ with $\delta(t_{n})\to 0$. In particular, we can choose $t_{n}$ so that
\[
\delta(t_{0})\leq \tfrac{1}{2C_{0}} \qtq{and} \delta(t_{n})\to 0,
\]
where $C_{0}:=C\cdot\widetilde{C}$, with $C$ and $\widetilde{C}$ the constants in \eqref{BoundT11} and \eqref{BoundCenter1}. 

Consider $0\leq a\leq b$.  For $n$ large we see that $b+\tfrac{1}{\lambda(b)^{2}}<t_{n}$. By estimates \eqref{BoundT11} and \eqref{BoundCenter1}, we find
\[
\left|\tfrac{1}{\lambda(b)^{2}}-\tfrac{1}{\lambda(t_{n})^{2}}\right|\leq C_{0}
\sup_{a\leq t\leq t_{n}}\bigl(\tfrac{1}{\lambda(t)^{2}}\bigr)\left\{\delta(a)+\delta(t_{n})  \right\}.
\]
Sending $n\to\infty$, it follows that 
\[
\sup_{t\geq a}\tfrac{1}{\lambda(t)^{2}}\leq C_{0}\delta(a)\sup_{t\geq a}\tfrac{1}{\lambda(t)^{2}}.
\]
Choosing $a=t_{0}$, we get
\[
\sup_{t\geq t_{0}}\tfrac{1}{\lambda(t)^{2}}=0.
\]
Lemma~\ref{Lemma111} then implies that $\delta(t)=0$ for all $t\geq t_{0}$, a contradiction.\end{proof}

Proposition~\ref{Compacness11} and Theorems~\ref{Tfinito}--\ref{Tfinito22} now yield Theorem~\ref{Th2}(i).

\bibliography{bibliografia}

\end{document}